\documentclass{amsart}
\usepackage{amssymb}
\usepackage{amsmath}
\usepackage{amscd}
\usepackage{url}
\usepackage{bbm}

\newtheorem{prop}{Proposition}[section]
\newtheorem{prop*}{Proposition*}[section]

\newtheorem{conj}[prop]{Conjecture}

\newtheorem{lem}[prop]{Lemma}
\newtheorem{lem*}[prop]{Lemma*}
\newtheorem{thm}[prop]{Theorem}
\newtheorem{thm*}[prop]{Theorem*}

\theoremstyle{definition}

\newtheorem{remar}[prop]{Remark}
\newcommand{\Ker}{\mathrm {Ker}}

\newcommand{\Aut}{{\mathrm {Aut}}}

\newcommand{\ord}{{\mathrm {ord}}}

\newcommand{\Hom}{{\mathrm {Hom}}}

\newcommand{\tr}{{\mathrm {tr}}}
\newcommand{\Tr}{{\mathrm {Tr}}}

\newcommand{\crys}{{\mathrm {crys}}}

\newcommand{\Frob}{{\mathrm {Frob}}}

\newcommand{\Gal}{\mathrm {Gal}}

\newcommand{\A}{{\mathbb A}}
\newcommand{\CC}{{\mathbb C}}
\newcommand{\C}{{\mathbb C}}
\newcommand{\RR}{{\mathbb R}}

\newcommand{\PP}{{\mathbb P}}
\newcommand{\QQ}{{\mathbb Q}}
\newcommand{\Q}{{\mathbb Q}}
\newcommand{\ZZ}{{\mathbb Z}}
\newcommand{\Z}{{\mathbb Z}}

\newcommand{\FF}{{\mathbb F}}

\newcommand{\GL}{\mathrm {GL}}

\newcommand{\SL}{\mathrm {SL}}
\newcommand{\Sp}{\mathrm {Sp}}

\newcommand{\GSp}{\mathrm {GSp}}

\newcommand{\Qbar}{\overline{\mathbb Q}}

\newcommand{\Fbar}{\overline{\mathbb F}}
\newcommand{\rhobar}{\overline{\rho}}

\DeclareFontFamily{U}{wncy}{}
    \DeclareFontShape{U}{wncy}{m}{n}{<->wncyr10}{}
    \DeclareSymbolFont{mcy}{U}{wncy}{m}{n}
    \DeclareMathSymbol{\Sh}{\mathord}{mcy}{"58} 

\begin{document}
\title{Modularity of a certain ``rank-$2$ attractor'' Calabi-Yau threefold}
\author{Neil Dummigan}

\date{November 21st, 2025.}
\address{University of Sheffield\\ School of Mathematical and Physical Sciences\\
Hicks Building\\ Hounsfield Road\\ Sheffield, S3 7RH\\
U.K.}
\email{n.p.dummigan@shef.ac.uk}

\begin{abstract} We prove that the $4$-dimensional Galois representations associated with a certain Calabi-Yau threefold are reducible, with $2$-dimensional composition factors coming from specific modular forms of weights $2$ and $4$, both level $14$. This was essentially conjectured by Meyer and Verrill. It was revisited in its present form by Candelas, de la Ossa, Elmi and van Straten, whose computations of Euler factors in a whole pencil of Calabi-Yau threefolds highlighted this fibre as one of three overwhelmingly likely to be ``rank-$2$ attractors''. 
\end{abstract}

\subjclass[2020]{11G40, 11F80}

\keywords{Modularity, Calabi-Yau, rank-$2$ attractor}

\maketitle

\section{Introduction} Let $X/\CC$ be a Calabi-Yau threefold, i.e. a smooth, proper, three-dimensional algebraic variety with Hodge numbers $h^{3,0}=1$, $h^{0,1}=h^{0,2}=0$.
Note that $h^{p,q}=h^{q,p}$. The singular cohomology $H^3_B(X(\CC),\Q)$ has a Hodge decomposition
$$H^3_B(X(\CC),\Q)\otimes\CC= \oplus_{i=0}^3 H^{3-i,i}.$$
In special cases it can happen that there is a decomposition {\em over $\Q$}, 
$$H^3_B(X(\CC),\Q)=V\oplus W, \,\,\,\,\text{with }V\otimes\CC=H^{3,0}\oplus H^{0,3}\,\,\text{and } W\otimes\CC=H^{2,1}\oplus H^{1,2}.$$
If $X$ is rigid (i.e. $h^{1,2}=h^{2,1}=0$) then of course there is vacuously such a decomposition. For rigid $X/\Q$, Gouv\^ea and Yui \cite[Theorem 3]{GY} have proved that the representation of the absolute Galois group $G_{\Q}$ on the $2$-dimensional $\ell$-adic vector space $H^3_{\text{\'et}}(X_{\Qbar},\Q_{\ell})$ (for any prime number $\ell$) is isomorphic to that associated with some weight-$4$ cuspidal newform for $\Gamma_0(N)$, where $N$ is divisible at most by primes of bad reduction. 

Hulek and Verrill \cite{HV} constructed a $5$-dimensional family of proper $3$-dimensional varieties $\overline{X}_{\mathbf{a}}$, obtained from hypersurfaces $X_{\mathbf{a}}$ in a certain $4$-dimensional toric variety by blowing up (surfaces containing) $30$ nodes lying outside the torus $T: X_1X_2X_3X_4X_5\neq 0$. The intersection with the torus is given by
$$(X_1+X_2+X_3+X_4+X_5)\left(\frac{a_1}{X_1}+\frac{a_2}{X_2}+\frac{a_3}{X_3}+\frac{a_4}{X_4}+\frac{a_5}{X_5}\right)=a_6.$$ For generic choices of $\mathbf{a}$, $\overline{X}_{\mathbf{a}}$ is a Calabi-Yau threefold, with $h^{1,1}=45, h^{1,2}=5$. 

For various singular subfamilies, Hulek and Verrill constructed a Calabi-Yau resolution $\widetilde{\overline{X}}_{\mathbf{a}}$. They showed that this is rigid for four values $\mathbf{a}=
(1:1:1:1:1:1)$, $(1:1:1:1:1:9)$, $(1:1:1:1:4:4)$ and $(1:1:1:4:4:9)$ \cite[Corollary 4.9]{HV}. They also found three non-rigid examples for which (the semi-simplification of) the $\ell$-adic $G_{\Q}$-representation  $H^3_{\text{\'et}}(\widetilde{\overline{X}}_{\mathbf{a},\Qbar},\Q_{\ell})$ (conjecturally semi-simple anyway) splits as a direct sum $V_{\mathbf{a}}\oplus W_{\mathbf{a}}$, where $V_{\mathbf{a}}$ is $2$-dimensional and $W_{\mathbf{a}}$ is $2h^{1,2}$-dimensional. These are $(\mathbf{a}, h^{1,2})=((1:1:1:1:1:25), 4)$, $((1:1:1:9:9:9), 2)$ and $((1:1:4:4:4:16), 1)$ \cite[Corollary 5.12]{HV}. Their proof is geometric, using ruled elliptic surfaces inside $X_{\mathbf{a}}$, and it follows from this method that $W_{\mathbf{a}}\simeq h^{1,2}\rho_{g,\ell}(-1)$ (a Tate twist), where $\rho_{g,\ell}$ is the $2$-dimensional $\ell$-adic Galois representation attached to a weight-$2$ cuspidal newform $g$ for $\Gamma_0(30)$, associated with a certain isogeny class of elliptic curves over $\Q$. (See the discussion before \cite[Theorem 6.11]{HV}.) They also showed, using the Faltings-Serre-Livn\'e method, that $V_{\mathbf{a}}\simeq \rho_{f,\ell}$, where $f$ is a weight-$4$ cuspidal newform for $\Gamma_0(N)$, with $N=30, 90, 30$ respectively \cite[Theorem 6.11]{HV}. It follows from the geometric nature of their construction that there is an associated rational decomposition $H^3_B(\widetilde{\overline{X}}_{\mathbf{a}}(\CC),\Q)=V\oplus W$ as above.

Candelas, de la Ossa, Elmi and van Straten, in a recent paper \cite{COES}, looked at the subfamily $\mathbf{a}=(1:1:1:1:1:\phi^{-1})$ (in their notation). They consider the quotient of $\overline{X}_{\mathbf{a}}$ by a cyclic group $G$ of order $10$ of automorphisms, generated by a cyclic permutation of the variables and by the involution $X_i\mapsto 1/X_i$. They call this quotient $X_{\phi}$, and from now on we shall adopt this notation. For $\phi\notin\Sigma:=\{0,1,\infty, \frac{1}{9}, \frac{1}{25}\}$, $X_{\phi}$ is nonsingular, and is a Calabi-Yau threefold with $h^{1,2}=h^{2,1}=1$. 

Using a $p$-adic method described in \cite{COS}, which employs the Picard-Fuchs differential equation for the periods of the one-parameter family $X_{\phi}$, they determined the characteristic polynomial of Frobenius on $H^3_{\text{\'et}}(X_{\phi,\Qbar}, \Q_{\ell})$, for the first thousand primes $p\geq 5$. For a given $p$, they computed for all possible $\phi\pmod{p}$. By observing ``persistent factorisations'' of these polynomials, they produced overwhelming numerical evidence for the following.
\begin{conj}\label{Conj1} The (semi-simplification of the) $4$-dimensional representation of $G_{\Q}$ on $H^3_{\text{\'et}}(X_{-\frac{1}{7},\Qbar}, \Q_{\ell})$ is isomorphic to a direct sum $\rho_{f,\ell}\oplus \rho_{g,\ell}(-1)$, where $f$ and $g$ are cuspidal newforms of weights $4$ and $2$, respectively, for $\Gamma_0(14)$, labelled $\mathbf{14.4.a.a}$ and $\mathbf{14.2.a.a}$ in the LMFDB database \cite{LMF}.
\end{conj}
Note that both these forms have rational Hecke eigenvalues.
\begin{conj}\label{Conj2} Let $\phi_{\pm}:=33\pm 8\sqrt{17}$. The (semi-simplification of the) $4$-dimensional representation of $G_{\Q(\sqrt{17})}$ on $H^3_{\text{\'et}}(X_{\phi_{\pm},\Qbar}, \Q_{\ell})$ is isomorphic to the restriction from $G_{\Q}$ to $G_{\Q(\sqrt{17})}$ of $\rho_{f,\ell}\oplus \rho_{g,\ell}(-1)$, where $f$ and $g$ are cuspidal newforms of weights $4$ and $2$, respectively, for $\Gamma_1(34)$, character $\left(\frac{17}{\cdot}\right)$, labelled $\mathbf{34.4.b.a}$ and $\mathbf{34.2.b.a}$ in the LMFDB database \cite{LMF}.
\end{conj}
The fields generated by the Hecke eigenvalues of $\mathbf{34.4.b.a}$ and $\mathbf{34.2.b.a}$ are $\Q(i)$ and $\Q(\sqrt{-2})$, respectively. Note that (as observed in \cite[\S 6]{COES}) Conjecture \ref{Conj1} is essentially (before quotienting out by the automorphism group of order $10$) already contained in Meyer's thesis \cite[p.157]{Me}. See the first line of the third table in \cite[\S 5.8]{Me}, which is the result of collaboration between Meyer and Verrill.
 
The authors of \cite{COES} also have compelling numerical evidence for a splitting $H^3_B(X_{\phi}(\CC), \Q)=V\oplus W$ as above, for these three values of $\phi$, and that they are ``rank-$2$ attractor points''. The notion of an attractor point arises from the string theory of black holes. They wished to find rank-$2$ attractor points in the family $X_{\phi}$. At such a point there is necessarily a splitting $H^3_B(X_{\phi}(\CC), \Q)=V\oplus W$. The Hodge conjecture implies that this splitting should be motivic in nature, therefore giving an associated decomposition of Galois representations. So to find the values of $\phi$, they used characteristic polynomials of Frobenius, as indicated above.

In the examples examined by Hulek and Verrill, as already mentioned, the decomposition of the Galois representation was proved rigorously, by an algebro-geometric construction. So far, nobody has been able to prove Conjectures \ref{Conj1} and \ref{Conj2} by similar methods. (See \cite[\S 6]{COES} for a discussion.) The construction by Hulek and Verrill of ruled elliptic surfaces inside $\overline{X}_{\mathbf{a}}$ contributes, in the case $\mathbf{a}=(1,1,1,1,1,-7)$, a factor of $L(g, s-1)^4$ to the $L$-function of $\overline{X}_{\mathbf{a}}$. We shall see that the whole $L$-function is $L(f, s)L(g, s-1)^5$, but that it is the complementary factor $L(f,s)L(g,s-1)$ that produces the $L$-function of the quotient $X_{-1/7}$. In this paper, we close the gap between what the geometric construction provides and what we need to obtain a proof of Conjecture \ref{Conj1}, by taking a completely different approach, which we now outline.

The decomposition of Galois representations
$$H^3_{\text{\'et}}(X_{-\frac{1}{7},\Qbar}, \Q_{\ell})^{\mathrm{ss}}\simeq \rho_{f,\ell}\oplus \rho_{g,\ell}(-1)$$ is equivalent to
$$\tr(\Frob_p^{-1}\mid H^3_{\text{\'et}}(X_{-\frac{1}{7},\Qbar}, \Q_{\ell})=a_p+pb_p,$$ for all but finitely many $p$, where $f=\sum a_nq^n$ and $g=\sum b_nq^n$ (normalised newforms). Both sides are independent of $\ell$, the left hand side at least for primes $p\neq\ell$ of good reduction. So it suffices to prove the decomposition for $\ell=5$.  

Let $\rho_{\ell}$ be the representation of $G_{\Q}$ on $H^3_{\text{\'et}}(X_{-\frac{1}{7},\Qbar}, \Z_{\ell})$, with $\rhobar_{\ell}$ the residual representation. We show that $\rhobar_{5}$ is reducible, with composition factors of dimensions $1$, $1$ and $2$. In fact they can be identified with characters $\mathrm{id}$, $\epsilon^{-3}$ (where $\epsilon$ is the mod $5$ cyclotomic character) and $\rhobar_{g, 5}(-1)$. 

To show that $\rhobar_5$ has a $G_{\Q}$-invariant filtration with subquotients of dimensions $1$, $1$, $2$, we look at the monodromy action of $\pi_1(\PP^1(\CC)-\Sigma)$ on the space of $\rhobar_5$, identifying its image. Using the fact that the Galois action must normalise this image, we can place the image of $G_{\Q}$ inside an appropriate maximal parabolic subgroup of $\GSp_2(\FF_5)$. This is something special to $\ell=5$, and does not depend on the choice $\phi=-\frac{1}{7}$. This happens in Section 2. At this point I should acknowledge the good influence of Vasily Golyshev, who has impressed upon me the importance of this kind of residual shrinking of monodromy image, and consequent reducibility of Galois representations across a parametrised family. 

We must be careful to show that the $\Z_5$-lattice, spanned by the basis with respect to which the monodromy matrices in \cite{COES} are computed, really is $H^3_{\text{\'et}}(X_{-\frac{1}{7},\Qbar}, \Z_{5})$, hence is $G_{\Q}$-invariant as well as monodromy invariant. This is dealt with in Section 3.

In Section 4 (for $\phi=-\frac{1}{7}$) we identify the composition factors of $\rhobar_5$, using the fact that their conductors have prime factors at most $2$ and $7$ (the primes of bad reduction), whose exponents can be bounded, and various computations to eliminate all alternative possibilities. Once we have proved that the $2$-dimensional subfactor is irreducible, its modularity (a consequence of what used to be Serre's conjecture) is an important fact.

Once we know that the $5$-adic representation $\rho_5$ is reducible, we will be able, in Section 8, to prove that it has $2$-dimensional composition factors. One of them has irreducible residual representation $\rhobar_{g,5}(-1)$. The other has reducible residual representation, factors $\mathrm{id}$ and $\epsilon^{-3}$. Modularity (up to twist) of both $2$-dimensional $5$-adic representations follows from work of Kisin, Skinner-Wiles and Pan on the Fontaine-Mazur conjecture for $\GL_2$. Using results of Livn\'e to bound the levels of the modular forms in question (i.e. the conductors of the $5$-adic representations), they are easily proved to be those appearing in Conjecture \ref{Conj1}.

It remains to prove that $\rho_5$ is reducible. To do this, we assume that it is irreducible, seeking a contradiction. Then, following Brown \cite{Br} in adapting a construction of Ribet \cite{Ri}, it is possible to use a non-split extension between the factors $\mathrm{id}$ and $\rhobar_{g,5}(-1)$ of $\rhobar_5$ to construct an element of order $5$ in a certain Bloch-Kato Selmer group. We do this in Section 7. Work of Kato, using his Euler system \cite{Ka}, bounds orders of Selmer groups using $L$-values. We might hope to get a contradiction by showing that an appropriate algebraic part of the corresponding $L$-value is not divisible by $5$. 

A difficulty is that the $L$-value $L(g, -1)$, related to the Selmer group in question by the Bloch-Kato conjecture \cite{BK}, is not critical, making the algebraic factor difficult to extract, and Kato's theorem not directly applicable. Inspired by work of Berger and Klosin \cite[\S 2.3]{BeKl}, we get around this by using $p$-adic $L$-functions and Iwasawa theory. Essentially, the $(-2)$-Tate twist that takes us from the central point $s=1$ to $L(g,-1)$ is replaced by a congruent (mod $5$) twist by the Legendre symbol of conductor $5$, allowing us to look at the central critical value of a twisted $L$-function instead. A complication arising in our case is that the elliptic curve associated with $g$ has supersingular reduction at $5$. This forces us to employ the $\pm$-theory developed by Pollack, Kobayashi and others, on $p$-adic $L$-functions and Selmer groups in the supersingular case. What we need is introduced in Section 6.

No reasonable person would doubt the validity of Conjectures \ref{Conj1} and \ref{Conj2}, given the strong experimental evidence. The authors of \cite{COES} have surely found the rank-$2$ attractor points they were looking for. Our goal in this paper is nonetheless to prove Conjecture \ref{Conj1} and put their discovery on a firm mathematical foundation. (I hasten to add that we cannot deduce from Conjecture \ref{Conj1} the expected  decomposition in singular cohomology.) 

At several places in Section 4, and to a lesser extent in Section 8, we rely on the computations of characteristic polynomials of Frobenius in \cite{COES}. The fact that these computations agree with Conjectures \ref{Conj1} and \ref{Conj2}, and with known values of Hecke eigenvalues, is ample reason to believe that they are correct. But the validity of the $p$-adic method used relies on an unproved conjecture, in \cite[\S 4.4]{COS}, in particular on the exact form of a certain $4$-by-$4$ matrix $U(0)$. In Section 5 we remove our dependence on this unproved conjecture, using a formula of Hulek and Verrill for counting points on their variety $\overline{X}_{\mathbf{a}}$, and their identification of a factor $L(g, s-1)^4$ of the $L$-function arising from $H^3_{\text{\'et}}(X_{\mathbf{a},\Qbar}, \Q_{\ell})$.

\section{The image of monodromy mod $5$, and its normaliser}
Recall that $\Sigma:=\{0,1,\infty, \frac{1}{9}, \frac{1}{25}\}$. We have a smooth family $X\rightarrow \PP^1(\CC)-\Sigma$ with fibres $X_{\phi}$. The integral cohomology modules $U_{\phi}:=H^3_B(X_{\phi}(\CC),\ZZ)/\{\mathrm{tors}\}$ form a local system over $\PP^1(\CC)-\Sigma$, and fixing any basepoint $\phi$ we get a monodromy representation 
$$M: \pi_1(\PP^1(\CC)-\Sigma)\rightarrow\Aut_{\ZZ}(U_{\phi}).$$
The fundamental group $\pi_1(\PP^1(\CC)-\Sigma)$ is generated by loops
$c_0, c_1, c_{\infty}, c_{1/9}, c_{1/25}$ about the singular parameter values, satisfying a relation $c_0c_1c_{\infty}c_{1/9}c_{1/25}=\mathrm{id}$, so it suffices to consider $M(c_0)$, $M(c_1)$, $M(c_{1/9})$ and $M(c_{1/25})$. 

In \cite[\S 3.4]{COES} these are expressed as matrices with respect to a basis for which the intersection form has matrix $\begin{pmatrix} 0_2&-I_2\\I_2&0_2\end{pmatrix}$. Swapping the third and fourth basis elements, we can change this to  $\begin{pmatrix}0&0&0&-1\\0&0&-1&0\\0&1&0&0\\1&0&0&0\end{pmatrix}$. The monodromy matrices lie in the symplectic group $\Sp_2(\ZZ)$ preserving this form. We are only concerned with their reductions mod $5$, giving the monodromy representation $\overline{M}$ of $\pi_1(\PP^1(\CC)-\Sigma)$ on $U_{\phi}/5U_{\phi}$. These matrices lie in $\Sp_2(\FF_5)$. Note that we have assumed that the $\ZZ$-span of the basis used in \cite{COES} is $U_{\phi}$. This is only conjectured in \cite[\S 3.2]{COES}, but in the next section we shall confirm it, at least locally at $5$. 

We find that $\overline{M}(c_{1/25})=I_4$, the identity matrix. The others are 
$$A:=\overline{M}(c_0)=\begin{pmatrix}1&-1&\kappa &-2\kappa\\0&1&-2\kappa &-\kappa\\0&0&1&1\\0&0&0&1\end{pmatrix},\,\,B:=\overline{M}(c_{1/9})=\begin{pmatrix}1&-2&0&2\kappa\\0&1&0&0\\0&-2/\kappa &1&2\\0&0&0&1\end{pmatrix},$$ $$C:=\overline{M}(c_1)=\begin{pmatrix}1&-1&\kappa &\kappa\\0&0&\kappa &\kappa\\0&-1/\kappa &2&1\\0&0&0&1\end{pmatrix}.$$
Their $\kappa$ is $1$ or $2$, where $X_{\phi}$ has been obtained from Hulek and Verrill's $\overline{X}_{\mathbf{a}}$ by quotienting by a cyclic group of order $10/\kappa$. In this paper we have chosen $\kappa=1$ for definiteness, but the case $\kappa=2$ may be dealt with similarly. (The choice affects $h^{1,1}=4\kappa +1$, but not the main theorem concerning $H^3(X_{\phi})$.)

Using the computer algebra system Maple \cite{M}, we find that 
$$A^2B^2C^3=\begin{pmatrix}1&4&3&2\\0&3&0&4\\0&0&2&2\\0&0&0&1\end{pmatrix},\,\,(AB^3)^3=\begin{pmatrix}1&0&0&1\\0&1&0&0\\0&0&1&0\\0&0&0&1\end{pmatrix},$$
$$(AB^2)^2(AB)^3(AB^3)^{12}=\begin{pmatrix}1&0&2&0\\0&1&0&2\\0&0&1&0\\0&0&0&1\end{pmatrix}$$ and $$((AB^2)^2(AB)^3)^2(AB)^3(BC)^2(AB^3)^6=\begin{pmatrix}1&4&0&0\\0&1&0&0\\0&0&1&1\\0&0&0&1\end{pmatrix}.$$
Let $G=\GSp_2(\FF_5)$ be the general symplectic group. It has a maximal parabolic subgroup $P$ with Levi subgroup
$L\simeq\GL_1(\FF_5)\times\GL_2(\FF_5)$ comprising matrices of the form
$$\begin{pmatrix}s&0&0&0\\0&\alpha &\beta &0\\0 &\gamma &\delta &0\\0&0&0&ts^{-1}\end{pmatrix},$$ with $A=\begin{pmatrix}\alpha &\beta\\\gamma &\delta\end{pmatrix}\in\GL_2(\FF_5)$ and $t=\det A$. The unipotent radical $U$ of $P$ comprises matrices of the form $$\begin{pmatrix}1&a&b&c\\0&1 &0 & b\\0&0 &1 & -a\\0&0&0&1\end{pmatrix}.$$
Using the above computations, it is easy to confirm the following.
\begin{prop}\label{mod5image}
The image $J:=\overline{M}(\pi_1(\PP^1(\CC)-\Sigma))$ in $\Sp_2(\FF_5)$ is contained in $P$. It contains $U$, and its intersection with $L$ is the subgroup $K$ comprising matrices of the form 
$$\begin{pmatrix}1&0&0&0\\0&\alpha &\beta & 0\\0&\gamma &\delta & 0\\0&0&0&1\end{pmatrix},$$
with $\begin{pmatrix}\alpha &\beta\\\gamma &\delta\end{pmatrix}\in\SL_2(\FF_5)$.
\end{prop}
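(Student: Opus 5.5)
The approach is to work with the three generators $A,B,C$ of $J$ (we already know $\overline{M}(c_{1/25})=I_4$, and $c_\infty$ is expressed through the others via the relation), and to compare them with the group
$$P_1:=\{\text{elements of }P\text{ with } s=1,\ t=1\}=U\rtimes K.$$
Concretely, $P_1$ consists of the matrices in $P$ whose first column is $(1,0,0,0)^T$, whose last row is $(0,0,0,1)$, and whose middle $2\times 2$ block has determinant $1$. The plan is to show first that $J\subseteq P_1$, then that $U\subseteq J$, and finally that $J$ maps onto $K\simeq P_1/U$. Together these give $J=P_1$. Since $U\cap L$ is trivial, it then follows that $J\cap L=P_1\cap L=K$.

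\emph{Step 1: $J\subseteq P_1$.} Inspecting $A$, $B$, $C$ (with $\kappa=1$) shows that each has first column $e_1$ and last row $(0,0,0,1)$. So each fixes the line $\langle e_1\rangle$ and its orthogonal complement $e_1^\perp=\langle e_1,e_2,e_3\rangle$, which is exactly the stabiliser condition defining $P$. The diagonal entries in positions $(1,1)$ and $(4,4)$ are both $1$, so $s=1$. Because the matrices are symplectic, the multiplier is $t=1$, and hence the middle block has determinant $1$. Concretely, these blocks are $\left(\begin{smallmatrix}1&-2\\0&1\end{smallmatrix}\right)$, $\left(\begin{smallmatrix}1&0\\-2&1\end{smallmatrix}\right)$ and $\left(\begin{smallmatrix}0&1\\-1&2\end{smallmatrix}\right)$. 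Since $P_1$ is a group, all of $J$ lies in $P_1$.

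\emph{Step 2: $U\subseteq J$.} This uses the Maple computations displayed above.
\begin{itemize}
\item $(AB^3)^3$ is the element of $U$ with $(a,b,c)=(0,0,1)$, which generates the centre of $U$.
\item $(AB^2)^2(AB)^3(AB^3)^{12}$ is the element with $(a,b,c)=(0,2,0)$.
\item The last displayed product is the element with $(a,b,c)=(4,0,0)$; one checks that its $(3,4)$ entry $1$ equals $-a=-4$ in $\FF_5$.
\end{itemize}
Now $U$ has order $125$ and is a Heisenberg-type group: its quotient by the central subgroup $\{(0,0,c)\}$ is $\FF_5^2$ via $(a,b)$. The images of the last two elements span this $\FF_5^2$, so together with the central element they generate $U$.

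\emph{Step 3: surjection onto $K$.} The middle blocks of $A$ and $B$ are a nontrivial upper unipotent and a nontrivial lower unipotent matrix in $\SL_2(\FF_5)$. These two generate $\SL_2(\FF_p)$, so the projection $J\to P_1/U\simeq K$ is surjective. Given $k\in K$, pick $j\in J$ projecting to $k$. Then $j=uk$ for some $u\in U\subseteq J$, so $k=u^{-1}j\in J$. Hence $K\subseteq J$, and therefore $J=UK=P_1$. Intersecting with $L$ gives $J\cap L=K$.

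The main obstacle is Step 2. It depends entirely on finding explicit words in $A$, $B$, $C$ that land in $U$, which the displayed Maple computations supply. The remaining steps are routine checks once these words are in hand.
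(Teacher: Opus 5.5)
Your proposal is correct and takes the same route as the paper, which only says the statement is "easy to confirm" from the displayed Maple words. You supply the missing details: $A$, $B$, $C$ lie in $U\rtimes K$; the three displayed unipotent products generate the Heisenberg group $U$; and the unipotent middle blocks of $A$ and $B$ generate $\SL_2(\FF_5)$. Together these give the slightly stronger conclusion $J=U\rtimes K$, and your argument does not need the paper's extra product $A^2B^2C^3$.
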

\begin{prop}\label{normaliser}
The normaliser $N_G(J)$ in $G$ of the monodromy image $J$ is contained in the parabolic subgroup $P$.  
\end{prop}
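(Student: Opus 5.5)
The idea is to realize $P$ as the stabiliser of a canonical object determined by $J$. Let $e_1,\dots,e_4$ be the standard basis of $\FF_5^4$. From the shapes of the matrices, $P$ is the stabiliser in $G$ of the line $\langle e_1\rangle$; by the choice of symplectic form this is the same as stabilising the isotropic flag $\langle e_1\rangle\subset\langle e_1\rangle^{\perp}=\langle e_1,e_2,e_3\rangle$. It therefore suffices to show that $\langle e_1\rangle$ is intrinsic to $J$, in the sense that any $g$ normalising $J$ must preserve it. Then $g\in P$.

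I would take the characterisation to be: $\langle e_1\rangle$ is the space of vectors fixed by the unipotent radical $U$ of $J$, and $U=O_5(J)$ is the largest normal $5$-subgroup of $J$.

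First, $O_5(J)=U$. By Proposition~\ref{mod5image}, $U\subseteq J\subseteq P$ and $J\cap L=K\simeq\SL_2(\FF_5)$, so $J=U\rtimes K$. Since $U$ is normal in $P$, it is a normal $5$-subgroup of $J$. Now let $O$ be any normal $5$-subgroup of $J$. Its image in $J/U\simeq\SL_2(\FF_5)$ is a normal $5$-subgroup there. The only normal $5$-subgroup of $\SL_2(\FF_5)$ is trivial, because it has more than one Sylow $5$-subgroup (it is generated by its elements of order $5$, and its normal subgroups are only $1$, $\pm1$ and itself). Hence $O\subseteq U$.

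Second, $O_5(J)$ is characteristic in $J$, so every $g\in N_G(J)$ normalises $U$. It then preserves the fixed space $V^U=\{v: uv=v\ \forall u\in U\}$. Computing this directly from the displayed form of $U$: the first row of $u-I$ is $(0,a,b,c)$ and its other nonzero rows are $(0,0,0,b)$ and $(0,0,0,-a)$. Letting $a,b,c$ vary, the kernel forces $v_2=v_3=v_4=0$, so $V^U=\langle e_1\rangle$. Therefore $g$ stabilises $\langle e_1\rangle$. Since $g\in\GSp$, it also stabilises $\langle e_1\rangle^{\perp}$, and so $g\in P$.

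The only potential obstacle is the group-theoretic input that $J/U\simeq\SL_2(\FF_5)$ has no nontrivial normal $5$-subgroup. This is standard: $\SL_2(\FF_5)$ has six Sylow $5$-subgroups, and it is perfect with centre $\pm1$. The rest follows directly from Proposition~\ref{mod5image} and the explicit shape of $U$, so no further matrix computations are needed.
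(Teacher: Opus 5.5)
Your proof is correct, and it takes a different route from the paper.

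The paper uses the $\FF_5$-rational Bruhat decomposition $G=\coprod_{w\in W}BwB$. Since $J$ is normal in $P$ (it is the preimage of the normal subgroup $K$ of $L\simeq P/U$), the Borel subgroup $B\subset P$ normalises $J$. Hence $N_G(J)$ is a union of double cosets $BwB$. A check over the eight Weyl group elements shows that only $1$ and $\omega$ normalise $J$, which gives $N_G(J)\subseteq B\cup B\omega B=P$.

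You instead make the flag intrinsic to $J$. Because $J=U\rtimes K$ and $K\simeq\SL_2(\FF_5)$ has no non-trivial normal $5$-subgroup, you get $U=O_5(J)$, which is characteristic in $J$. So $N_G(J)\subseteq N_G(U)$, and $N_G(U)$ stabilises $V^U=\langle e_1\rangle$.

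What each approach buys:
\begin{itemize}
\item Your argument avoids both the Bruhat decomposition and the element-by-element Weyl group check, which the paper asserts without displaying. It uses nothing beyond Proposition~\ref{mod5image}. It also applies verbatim to any subgroup of $P$ containing $U$ whose image in $L$ has trivial $O_5$.
\item The paper's argument yields the equality $N_G(J)=P$ at once. Only the inclusion is needed for Proposition~\ref{GaloisMod5}, though.
\end{itemize}

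The one fact you use without proof is that $P=LU$ is the whole stabiliser of $\langle e_1\rangle$ in $G$. This is standard. To make it explicit: $G$ acts transitively on the $156$ lines of $\FF_5^4$, so that stabiliser has order $\#G/156=240000=\#L\cdot\#U$, and it visibly contains $LU$.
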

\begin{proof} Let $B$ be the upper-triangular Borel subgroup of $G$, containing the diagonal torus $T$ and contained in the maximal parabolic subgroup $P$. There is an $\FF_5$-rational Bruhat decomposition $G=\coprod_{w\in W}BwB$, where $W=N_G(T)/T$ is the Weyl group \cite[3.15(vi)]{DM}. Since clearly $B\subset N_G(J)$, any term $BwB$ is either entirely inside $N_G(J)$ or completely disjoint from it. The only element of $W$ normalising $J$ is $\omega=\begin{pmatrix}1&0&0&0\\0&0&-1&0\\0&1&0&0\\0&0&0&1\end{pmatrix}$, but $B\omega B\subset P$, in fact $P=B\cup B\omega B$.
\end{proof}
The \'etale $5$-adic cohomology $H^3_{\text{\'et}}(X_{-\frac{1}{7},\Qbar}, \Q_{5})$ contains a natural $G_{\Q}$-invariant lattice $H^3_{\text{\'et}}(X_{-\frac{1}{7},\Qbar}, \Z_{5})$, naturally identified with $U_{-\frac{1}{7}}\otimes\ZZ_5$. Let $\rho_5$ be the representation of $G_{\Q}$ on $H^3_{\text{\'et}}(X_{-\frac{1}{7},\Qbar}, \Z_{5})$, with $\rhobar_5$ the residual representation, on $U_{-\frac{1}{7}}/5U_{-\frac{1}{7}}$.
\begin{prop}\label{GaloisMod5}
\end{prop}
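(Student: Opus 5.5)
The statement is presumably that $\rhobar_5(G_{\Q})$ lies in the parabolic $P$ (in the basis of \S2), so that $\rhobar_5$ has a $G_{\Q}$-stable filtration with graded pieces of dimensions $1,2,1$. The plan is to show that $\rhobar_5(G_{\Q})$ normalises the monodromy image $J$ and then apply Proposition \ref{normaliser}.

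\textbf{Step 1: Galois action versus geometric monodromy.} The family $X\to\PP^1-\Sigma$ is defined over $\Q$, and $\Sigma$ is a $\Q$-rational set. The fibre over the rational point $\phi_0=-\frac17$ gives a splitting of the homotopy exact sequence
$$1\to\pi_1^{\text{\'et}}(\PP^1_{\Qbar}-\Sigma,\overline{\phi_0})\to\pi_1^{\text{\'et}}(\PP^1_{\Q}-\Sigma,\overline{\phi_0})\to G_{\Q}\to1.$$
The lisse sheaf $R^3\pi_*\Z_5$ gives a representation of the middle group on $H^3_{\text{\'et}}(X_{\phi_0,\Qbar},\Z_5)$. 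Restricted to $G_{\Q}$ through this splitting, it is $\rho_5$. Restricted to the geometric fundamental group, it is the $5$-adic completion of $M$, via the comparison isomorphism $H^3_{\text{\'et}}(X_{\phi_0,\Qbar},\Z_5)\cong U_{\phi_0}\otimes\Z_5$. This comparison also uses that $\pi_1^{\text{\'et}}$ is the profinite completion of the topological $\pi_1$, with a path from the chosen basepoint to $\phi_0$.

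Since the geometric fundamental group is normal in the arithmetic one, $\rho_5(\sigma)$ normalises the image of geometric monodromy for every $\sigma\in G_{\Q}$. Reducing mod $5$ shows that $\rhobar_5(G_{\Q})$ normalises the closure of $\overline{M}(\pi_1)$. That group is finite, so it is just $J$.

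\textbf{Step 2: landing in $G$ and concluding.} The cup-product pairing on $H^3$ is Galois-equivariant up to the cyclotomic twist: it takes values in $\Z_5(-3)$. Hence $\rhobar_5(G_{\Q})\subset\GSp_2(\FF_5)=G$, with multiplier $\epsilon^{-3}$. Here I assume the \S2 basis is a $\Z_5$-basis of the \'etale lattice, i.e.\ the $\Z$-span of the \cite{COES} basis equals $U_{\phi}$ locally at $5$, which is deferred to Section 3. The form is then perfect mod $5$ with the stated matrix. Proposition \ref{normaliser} now gives $\rhobar_5(G_{\Q})\subset N_G(J)\subset P$.

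\textbf{Step 3: reading off the filtration.} An element of $P$ preserves the flag $\langle e_1\rangle\subset\langle e_1,e_2,e_3\rangle$. So $\rhobar_5$ has a filtration with a $1$-dimensional subrepresentation, a $2$-dimensional middle piece and a $1$-dimensional quotient. The characters on the outer pieces are $s$ and $ts^{-1}$, whose product is the multiplier $\epsilon^{-3}$. The middle piece has determinant $t$, also equal to $\epsilon^{-3}$.

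\textbf{Main obstacle.} The delicate step is Step 1: the compatibility between the topological monodromy computed in \cite{COES} (transcendentally, via a chosen basepoint and paths) and the arithmetic fundamental group at the basepoint $-\frac17$. One has to check that transporting along a path from the \cite{COES} basepoint to $-\frac17$ only conjugates $J$ within $\Sp_2(\FF_5)$ in a way compatible with the chosen basis. The integrality issue is isolated to Section 3.
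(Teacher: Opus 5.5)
Your proposal is correct and is essentially the paper's argument: the section $G_{\Q}\hookrightarrow\pi_1^{\text{\'et}}(\PP^1_{\Q}-\Sigma,-\frac17)$ coming from the rational basepoint forces $\rhobar_5(G_{\Q})$ to normalise the geometric monodromy image $J$, and Proposition \ref{normaliser} then places it in $P$. Your Step 2 spells out a point the paper leaves implicit, namely that the cup product into $\Z_5(-3)$ puts $\rhobar_5(G_{\Q})$ inside $\GSp_2(\FF_5)$ in the first place. The basepoint issue you flag is harmless, because transporting the basis along a path gives literally the same matrix group $J$.
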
 The image of $\rhobar_5$ is contained in the maximal parabolic subgroup $P$.
\begin{proof} Choosing the rational point $\phi=-\frac{1}{7}$ as basepoint, we have the \'etale (algebraic) fundamental group $\pi_1^{\text{\'et}}(\PP^1_{\Q}-\Sigma, -\frac{1}{7})$, acting on $U_{-\frac{1}{7}}\otimes\ZZ_5$ and thereby on $U_{-\frac{1}{7}}/5U_{-\frac{1}{7}}$. The \'etale fundamental group has a normal subgroup $\pi_1^{\text{\'et}}(\PP^1_{\Qbar}-\Sigma, -\frac{1}{7})$ (with quotient $G_{\Q}$), naturally identified with the profinite completion of $\pi_1(\PP^1(\CC)-\Sigma)$, and its action on $U_{-\frac{1}{7}}\otimes\ZZ_5$ with the monodromy representation. 

The rational basepoint $-\frac{1}{7}$ gives a section $G_{\Q}\hookrightarrow\pi_1^{\text{\'et}}(\PP^1_{\Q}-\Sigma, -\frac{1}{7})$, which recovers the representations $\rho_5$ and $\rhobar_5$ of $G_{\Q}$ on $U_{-\frac{1}{7}}\otimes\ZZ_5$ and $U_{-\frac{1}{7}}/5U_{-\frac{1}{7}}$ respectively. The image of $G_{\Q}$ in $\pi_1^{\text{\'et}}(\PP^1_{\Q}-\Sigma, -\frac{1}{7})$ normalises the normal subgroup $\pi_1^{\text{\'et}}(\PP^1_{\Qbar}-\Sigma, -\frac{1}{7})$. Therefore $\rhobar_5(G_{\Q})$ must normalise the monodromy image $J:=\overline{M}(\pi_1(\PP^1(\CC)-\Sigma))$. Hence, by Proposition \ref{normaliser}, it is contained in $P$.
\end{proof}

\begin{remar} The monodromy matrices in \cite[\S 3.4]{COES} were found experimentally by numerical analytic continuation, but were not proved to be correct. Given that the entries must be integers, it is only necessary to have sufficiently good bounds on the errors. In the process of updating and relocating the AESZ database (\url{https://cydb.mathematik.uni-mainz.de/}) of Calabi-Yau operators, A. Klemm has confirmed their correctness, dealing with error bounds as in work of Mezzarobba \cite{Mez}, and kindly sharing his computations with me.\hfill$\triangle$
\end{remar}
\begin{remar} We should actually have used the transposes of the matrices in \cite{COES}, where matrices act on the right on row vectors, opposite to our convention. But combining transposition with a reversal of the basis is equivalent to reflection in the anti-diagonal, so our conclusions remain valid.\hfill$\triangle$
\end{remar}

\section{An explicit basis for $5$-integral homology} 
Given an algebraic family of regular $3$-forms $\omega_{\phi}$ on the $X_{\phi}$, unique up to multiplication by a constant non-zero scalar, and a parallel section $\gamma_{\phi}$ of the locally constant sheaf $H_3(X_{\phi}(\CC), \ZZ)$, the map 
$$\gamma_{\phi}\mapsto \int_{\gamma_{\phi}}\omega_{\phi}$$ produces an isomorphism
$$\iota_{\omega}: H_3(X_{\phi}(\CC),\ZZ)\otimes\CC \simeq \ker(\mathcal{L}),$$
where $$\mathcal{L}=S_4\theta^4+S_3\theta^3+S_2\theta^2+S_1\theta+S_0,$$
with $\theta:=\phi\,\frac{d}{d\phi}$ and $S_0,\ldots, S_4$ certain cubic polynomials in $\phi$, with $S_4=(\phi-1)(9\phi-1)(25\phi-1)$ and the rest as in \cite[\S 3.1]{COES}. (This is number 34 in the AESZ database.) Here the right hand side of the isomorphism is viewed as a $4$-dimensional space of solutions $f$ to the Picard-Fuchs equation $\mathcal{L}(f)=0$, defined for $\phi$ in some small open disc close to, but not containing, $\phi=0$. On the left hand side we may think of $\phi$ as fixed at any basepoint $\phi_0$ inside that small disc, or more generally any $\phi_0$ outside the singular locus $\Sigma:=\{0,1,\infty, \frac{1}{9}, \frac{1}{25}\}$, as long as we fix a homotopy class of paths from $\phi_0$ to some point in the disc, along which to parallel transport homology classes via the flat connection. Thus we are looking at the degree $3$ integral homology of the fibre $X_{\phi_0}(\CC)$. Throughout, we mod out by any torsion, without reflecting this in the notation. In other words, we identify the integral homology with its image in the rational homology.

For a $\CC$-basis for $\ker(\mathcal{L})$ we may take the Frobenius basis centred at the maximum unipotent monodromy (MUM) point $\phi=0$:

\begin{align*}\Phi_0(\phi)&=f_0(\phi);\\
\Phi_1(\phi)&=f_0(\phi)\log(\phi)+f_1(\phi);\\
\Phi_2(\phi)&=\frac{1}{2}f_0(\phi)\log^2(\phi)+f_1(\phi)\log(\phi)+f_2(\phi);\\
\Phi_3(\phi)&=\frac{1}{6}f_0(\phi)\log^3(\phi)+\frac{1}{2}f_1(\phi)\log^2(\phi)+f_2(\phi)+f_3(\phi).\end{align*}
Here the $f_i$ are certain power series in $\phi$, with $f_0(0)=1$ and $f_j(0)=0$ for $j>0$, and we fix a branch of $\log$.
There is a closed expression $f_0(\phi)=\sum_{n\geq 0} a_n\phi^n$, with $a_n=\sum_{i+j+k+l+m=n}\left(\frac{n!}{i!j!k!l!m!}\right)^2$, found by Verrill. We shall use a ``scaled Frobenius basis'' $\{\Psi_j\}$, with $\Psi_j:=\frac{\Phi_j}{(2\pi i)^j}$.

Viewing $\log$ now as a multi-valued function on $\CC-\{0\}$, these may be viewed as multi-valued functions on a sufficiently small punctured disc centred at $0$. Using $\log(\phi)\mapsto\log(\phi)+2\pi i$, an elementary calculation confirms that, with respect to the basis $\{\Psi_0,\Psi_1,\Psi_2,\Psi_3\}$, the monodromy for a generating loop $c_0$ around $0$ is represented by the matrix $$T_0=\begin{pmatrix}1&1&1/2&1/6\\0&1&1&1/2\\0&0&1&1\\0&0&0&1\end{pmatrix}=\exp(N),\text{  with  } N=\begin{pmatrix}0&1&0&0\\0&0&1&0\\0&0&0&1\\0&0&0&0\end{pmatrix}.$$

By a process of numerical analytical continuation, monodromy matrices $T_{1/25}, T_{1/9}$ and $T_{\infty}$ about the other singular points may be computed, with respect to the same basis. This was first done by D. van Straten, ``almost 20 years ago'' at the time of writing \cite{vS}. The isomorphism $\iota_{\omega}$ respects monodromy, i.e. the action of monodromy on functions may be used to track the action on integral homology classes, if they (or rather their images under $\iota_{\omega}$) are expressed in terms of the scaled Frobenius basis $\{\Psi_j\}$.

The following is \cite[\S 3.2]{COES}, justified by general expectations about mirror symmetry. Their $\Pi$ is our $\begin{pmatrix}v_0\\v_1'\\v_3\\v_2\end{pmatrix}$. Note also that in \cite[\S 3.2]{COES}, $\bar{\omega}_0=\Phi_0$ and $\bar{\omega}_1=\Phi_1$, but $\bar{\omega}_2=2\Phi_2$ and $\bar{\omega}_3=6\Phi_3$. So to get from their matrix $\hat{\rho}$ to ours, one must multiply the third column by $2$, the fourth column by $6$, and swap the third and fourth rows. (We also correct a sign error in the top left corner.)
\begin{conj}\label{basis} Let $$\begin{pmatrix}v_0\\v_1'\\v_2\\v_3\end{pmatrix}:=\begin{pmatrix}-8\lambda & 1/2 & 0 & 12\\1/2 & 0 & -12 & 0\\0 & 1 & 0 & 0\\1 & 0 & 0 & 0\end{pmatrix} \begin{pmatrix}\Psi_0\\\Psi_1\\\Psi_2\\\Psi_3\end{pmatrix},$$ where $\lambda:=\frac{\zeta(3)}{(2\pi i)^3}$. Then for appropriately scaled $\omega_{\phi}$, $\{v_0, v_1', v_2, v_3\}$ is the image, under the isomorphism $\iota_{\omega}$, of a $\ZZ$-basis of $H_3(X_{\phi}(\CC), \ZZ)$.
\end{conj}
The monodromy matrices presented in \cite[\S 3.4]{COES}, used in the previous section, are with respect to this basis. The conjecture is equivalent to the same property for $\{v_0, v_1, v_2, v_3\}$, where $v_1:=-v_1'+v_3$. Notice that $\{v_0, v_1, v_2, v_3\}=\{v_0, Nv_0, N^2v_0/12, \Phi_0\}$. In proving the conjecture (to a sufficiently good approximation), we shall introduce some explicit cycles representing classes in $H_3(X_{\phi}(\CC), \ZZ)$. 

Recall that the equation for $\overline{X}_{\mathbf{a}}\cap T$ is 
$$(X_1+X_2+X_3+X_4+X_5)\left(\frac{1}{X_1}+\frac{1}{X_2}+\frac{1}{X_3}+\frac{1}{X_4}+\frac{1}{X_5}\right)=\phi^{-1}.$$
Dehomogenising with respect to $X_5$, this becomes 
\begin{equation}\label{XT}(x_1+x_2+x_3+x_4+1)\left(\frac{1}{x_1}+\frac{1}{x_2}+\frac{1}{x_3}+\frac{1}{x_4}+1\right)=\phi^{-1}.\end{equation}
If we choose $\phi$ with $|\phi|<1/25$, so that $|\phi^{-1}|>25$, then clearly there is no solution with $|x_1|=|x_2|=|x_3|=|x_4|=1$, since both $(x_1+x_2+x_3+x_4+1)$ and $\left(\frac{1}{x_1}+\frac{1}{x_2}+\frac{1}{x_3}+\frac{1}{x_4}+1\right)$ would be at most $5$ in absolute value. 

Letting $a:=1+x_1+x_2+x_3, b:=1+\frac{1}{x_1}+\frac{1}{x_2}+\frac{1}{x_3},$ we get a quadratic equation 
\begin{equation}\label{XTquad} bx_4^2+(ab+1-\phi^{-1})x_4+a=0.\end{equation}
The product of the roots is $a/b$. In the case that $|x_1|=|x_2|=|x_3|=1$ we have $b=\overline{a}$, so $|a/b|=1$.
Note that since $\phi\neq 1$, $a=b=0$ is impossible. There must be a unique root $x_4=:s(x_1, x_2, x_3)$ with $|x_4|<1$. Then we define a $3$-cycle (and the class in $H_3(\overline{X}_{\mathbf{a}}(\CC), \ZZ)$ it represents) by
$$\tau':=\{(x_1,x_2,x_3,x_4,1)\in (\overline{X}_{\mathbf{a}}\cap T)(\CC):\,|x_1|=|x_2|=|x_3|=1, x_4=s(x_1,x_2,x_3)\}.$$
Topologically, this is a $3$-dimensional torus. 

We have constructed a cycle in the Hulek-Verrill manifold $\overline{X}_{\mathbf{a}}(\CC)$, but the condition $|x_1|=|x_2|=|x_3|=|x_5|=1, |x_4|<1$ implies that any orbit of the group $G$ (generated by inversion and a cyclic shift) has at most one point in $\tau'$, so $\tau'$ projects bijectively to its image in $X_{\phi}(\CC)$, which we call $\tau$.

Consider now the positive real locus 
$$\sigma'':=\{(x_1, x_2, x_3, x_4, 1)\in (\overline{X}_{\mathbf{a}}\cap T)(\CC):\, x_i\in \RR_{>0},\,\forall \,1\leq i\leq 4\}.$$
We take $\phi$ to be real and just slightly smaller than $1/25$, so that $\phi^{-1}-25$ is a small positive real number. If $\phi$ were equal to $1/25$ then the point $(1,1,1,1,1)$ would be a nodal singularity on $\overline{X}_{\mathbf{a}}$, so (following \cite[(3.2)]{COGP}) we set $x_i=1+y_i$ and expand the equation (\ref{XT}) to second order in the $y_i$, obtaining an approximation
$$\left(5+\sum y_i\right)\left(5-\sum y_i+\sum y_i^2\right)\approx\phi^{-1},$$ i.e.
\begin{equation}\label{sphere} 4(y_1^2+y_2^2+y_3^2+y_4^2)-2(y_1y_2+y_1y_3+y_1y_4+y_2y_3+y_2y_4+y_3y_4)\approx \phi^{-1}-25.\end{equation}
Summing $y_i^2+y_j^2-2y_iy_j\geq 0$ over all $6$ unordered pairs $\{i,j\}$ of distinct $1\leq i,j\leq 4$, we find that
$$3(y_1^2+y_2^2+y_3^2+y_4^2)-2(y_1y_2+y_1y_3+y_1y_4+y_2y_3+y_2y_4+y_3y_4)\geq 0,$$
so the quadratic form on the left of (\ref{sphere}) is positive definite, and it describes a topological $3$-sphere.
If all $y_i\ll 1$ then this is a good approximation. If we make $\phi^{-1}-(1/25)$ small enough that, subject to $y_i\ll 1$, they are forced to be even smaller, then there will be an annulus from which $(y_1,y_2,y_3,y_4)$ is excluded, and we can be sure that, whether or not we are looking at the whole of $\sigma''$, there is at least a connected component of $\sigma''$ approximately described by (\ref{sphere}). We call this $\sigma'$, and by triangulating this orientable $3$-manifold we create a $3$-cycle representing a class in $H_3(\overline{X}_{\mathbf{a}}, \ZZ)$, which we give the same name. Let $\sigma$ be the image of the manifold $\sigma'$ under projection from $\overline{X}_{\mathbf{a}}(\CC)$ to $X_{\phi}(\CC)$. 

As a $3$-sphere, $\sigma'$ is orientable. We need to show that the automorphisms in $G$ are orientation-preserving to deduce that $\sigma$ is also orientable. A $5$-cycle has odd order, so is automatically orientation-preserving. The inversion sending each $x_i$ to $1/x_i$ is approximated by $y_i\mapsto -y_i$ for small $y_i$, hence by the antipodal map of the standard $3$-sphere, but such antipodal maps are orientation-preserving in odd dimension. Hence $\sigma$ is orientable, and we may create a corresponding element of $H_3(X_{\phi}(\CC), \ZZ)$ with the same name. Note that, since the projection map is $10$-to-$1$ from the manifold $\sigma'$ onto its image $\sigma$, the push-forward of the homology class $\sigma'$ is $10\sigma$.

\begin{lem}\label{st} Choose $\phi$ real and just slightly smaller than $1/25$, and $\tau, \sigma$ as above. Then for the intersection pairing $\langle,\rangle$ on $H_3(X_{\phi}(\CC),\ZZ)$ we have $\langle\sigma,\tau\rangle=\pm 1$.
\end{lem}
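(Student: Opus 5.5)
We compute the intersection number by working upstairs in $\overline{X}_{\mathbf{a}}(\CC)$ and then descending to the quotient. The key observation is that $\tau'$ and $\sigma'$ meet in exactly one point, transversally. A point of $\sigma'$ has all $x_i$ positive real and close to $1$. A point of $\tau'$ has $|x_1|=|x_2|=|x_3|=1$. So an intersection point must have $x_1=x_2=x_3=1$. Then $a=b=4$, and equation (\ref{XTquad}) becomes $4x_4^2+(17-\phi^{-1})x_4+4=0$. Its roots are real and positive with product $1$, since $\phi^{-1}-17>8$. The root $s(1,1,1)$ with $|x_4|<1$ is the smaller one. Because $\phi^{-1}$ is close to $25$, this root is close to $1/4$, not close to $1$, so it seems to miss $\sigma'$. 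The other root, close to $4$, would lie on $\sigma'$ but not on $\tau'$.

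This suggests I have misidentified the picture near the node, and this is the step I expect to be the main obstacle. I would redo the computation with $\phi$ just below $1/25$. Then the discriminant $(17-\phi^{-1})^2-64$ is small and positive, so both roots are near $1$, straddling it. The smaller root $x_4=1+y_4$ with $y_4<0$ lies on $\tau'$, while $y_1=y_2=y_3=0$. Substituting into (\ref{sphere}) gives $4y_4^2\approx\phi^{-1}-25$, which has exactly one negative solution. Hence $\sigma'\cap\tau'$ is the single point $(1,1,1,1+y_4,1)$ with $y_4\approx-\tfrac12\sqrt{\phi^{-1}-25}$.

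To confirm this is the only intersection point, I would use the annulus argument from the construction of $\sigma'$. Any other point with $x_1=x_2=x_3=1$ on the component $\sigma'$ must satisfy (\ref{sphere}), and the only points on the real line $y_1=y_2=y_3=0$ are $y_4=\pm\tfrac12\sqrt{\phi^{-1}-25}$. Of these, only the negative one has $|x_4|<1$.

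Transversality at this point follows from a tangent space computation. Near the point, $\tau'$ is parametrised by $(\theta_1,\theta_2,\theta_3)$ with $x_j=e^{i\theta_j}$ and $x_4=s(x_1,x_2,x_3)$. Its tangent vectors therefore have purely imaginary $dy_1,dy_2,dy_3$ components. The tangent space of $\sigma'$ is real, equal to the tangent space of the ellipsoid (\ref{sphere}) at a point on the $y_4$-axis. Its $dy_1,dy_2,dy_3$ components span $\RR^3$, so $\sigma'$ projects isomorphically onto the real $(y_1,y_2,y_3)$ directions. Inside the complex $3$-dimensional tangent space of $\overline{X}_{\mathbf{a}}$, coordinatised by $y_1,y_2,y_3$, the two tangent spaces are therefore $\RR^3$ and $i\RR^3$. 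Together they span the whole $6$-dimensional real space, so the local intersection multiplicity is $\pm1$ and $\langle\sigma',\tau'\rangle=\pm1$.

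To descend to $X_\phi$, let $\pi$ denote the $10$-to-$1$ quotient map. By the projection formula, $\langle\pi_*\sigma',\tau\rangle=\langle\sigma',\pi^*\tau\rangle$. Here $\pi^*\tau=\sum_{g\in G}g\tau'$, and $\pi_*\sigma'=10\sigma$.

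An alternative, more direct route works in $X_\phi$ itself. The intersection points of $\sigma$ and $\tau$ are the images of the points where $\sigma'$ meets some $g\tau'$. These are equivalent under $G$ to points of $\sigma'\cap\tau'$, since $\sigma'$ is $G$-stable. So $\sigma\cap\tau$ is the single image point $\pi(P)$, where $P$ is the point found above.

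The group $G$ acts freely near $P$, because $P$ is not fixed by inversion (its image is $(1,1,1,1/x_4)$) nor by the cyclic shift. So $\pi$ is a local diffeomorphism at $P$, and transversality with multiplicity $\pm1$ persists downstairs. Hence $\langle\sigma,\tau\rangle=\pm1$.
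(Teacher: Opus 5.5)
Your corrected argument is right and is essentially the paper's. The paper's proof likewise rests on $\sigma'\cap\tau'$ being the single point $(1,1,1,s(1,1,1),1)$, which then descends to the unique intersection of $\sigma$ and $\tau$. That descent works because $\sigma'$ is $G$-stable and $G$ acts freely there; your tangent-space computation adds the transversality that the paper leaves implicit. For freeness you should check all nine non-identity elements of $G$, not just the two generators, though it also follows directly from the paper's statement that $\sigma'\to\sigma$ is $10$-to-$1$.

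Delete the opening paragraph. Its claim that $s(1,1,1)\approx 1/4$ is an arithmetic slip: the quadratic is $4x_4^2+(17-\phi^{-1})x_4+4\approx 4(x_4-1)^2$, so both roots are near $1$, as your second paragraph correctly says. The unfinished projection-formula detour can also go, since you do not need it.
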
 (See Remark \ref{sgn} below for the determination of the exact sign.)
\begin{proof} The unique intersection point of $\tau'$ and $\sigma'$ is $(1,1,1,s(1,1,1),1)$. The lemma now follows from the fact that the manifold $\tau'$ projects bijectively to its image $\tau$. 
\end{proof}
These homology classes, and their intersection relation, may be parallel-transported to other choices of $\phi\notin\Sigma$. Clearly $\sigma'$ is a vanishing cycle as $\phi\rightarrow\frac{1}{25}$.

\begin{lem}\label{holper} There is a scaling of $\omega_{\phi}$ such that $\iota_{\omega}(\tau)=\Phi_0=v_3$.
\end{lem}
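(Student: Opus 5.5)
The plan is to compute the period of the holomorphic $3$-form over the torus $\tau$ directly by residues. Write the equation (\ref{XT}) as $P(x)=0$, where
$$P(x_1,\dots,x_4):=1-\phi(x_1+x_2+x_3+x_4+1)\Big(\tfrac{1}{x_1}+\tfrac{1}{x_2}+\tfrac{1}{x_3}+\tfrac{1}{x_4}+1\Big).$$
Take the standard residue form
$$\omega_\phi:=\mathrm{Res}_{P=0}\,\frac{1}{P}\,\frac{dx_1}{x_1}\wedge\frac{dx_2}{x_2}\wedge\frac{dx_3}{x_3}\wedge\frac{dx_4}{x_4},$$
suitably normalised. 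This is a regular nonvanishing $3$-form on the smooth part of $\overline{X}_{\mathbf a}\cap T$, and it extends to a generator of $H^{3,0}$. The torus $\tau'$ lies over $|x_1|=|x_2|=|x_3|=1$ with $x_4=s(x_1,x_2,x_3)$ the root inside the unit disc. By the Leray coboundary/residue theorem, $\int_{\tau'}\omega_\phi$ equals, up to a factor $(2\pi i)^{-1}$ and a sign, the integral of $\frac{1}{P}\prod dx_j/x_j$ over the $4$-torus $|x_j|=1$ ($j\le 3$), $|x_4|=\varepsilon$, deformed to $|x_4|=1$. In other words, the inner $x_4$-integral over $|x_4|=1$ picks up exactly the residue at $x_4=s$. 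The other root lies outside the unit disc because the product of the roots has modulus $1$. Let me check the pole at $x_4=0$: near $x_4=0$ we have $P\sim -\phi\, a/x_4$, so $1/P$ vanishes there and $dx_4/x_4$ contributes no pole. Hence the $4$-torus integral equals $2\pi i$ times the integral over $\tau'$.

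Next, expand on the torus $|x_j|=1$, where $|\phi(x_1+\dots+1)(1/x_1+\dots+1)|\le 25|\phi|<1$. This gives a geometric series
$$\frac{1}{P}=\sum_n \phi^n\Big(\sum x_i\Big)^n\Big(\sum x_i^{-1}\Big)^n.$$
Taking constant terms, $\frac{1}{(2\pi i)^4}\int_{T^4}\frac{1}{P}\prod\frac{dx_j}{x_j}=\sum_n a_n\phi^n=f_0(\phi)=\Phi_0(\phi)$, with Verrill's $a_n$ as given. Choosing the scaling constant of $\omega_\phi$ to absorb the powers of $2\pi i$ and the sign, we get $\int_{\tau'}\omega=\Phi_0$.

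Finally, pass to the quotient. The form $\omega_\phi$ is $G$-invariant: cyclic shifts preserve it, and inversion sends $dx/x\mapsto -dx/x$ in each of the four variables while fixing $P$, so the sign is even. Hence $\omega_\phi$ descends to $X_\phi$, and since $\tau'\to\tau$ is bijective, $\int_\tau\omega=\int_{\tau'}\omega=\Phi_0=v_3$. The result is valid initially for $|\phi|<1/25$ and near $0$, and then extends by parallel transport, consistent with $\iota_\omega$ being defined via the disc near $0$. The main obstacle is justifying the residue step carefully: checking the orientation and sign conventions, and that $\omega_\phi$ so defined genuinely extends over the blow-up and the resolution to the holomorphic $3$-form on $X_\phi$ used to define $\iota_\omega$. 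For the latter I would appeal to uniqueness of $H^{3,0}$ up to scalar, noting that the resulting period satisfies $\mathcal L$ and is holomorphic at $0$ with value $1$, hence equals $\Phi_0$.
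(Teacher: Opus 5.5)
Your computation is sound as far as it goes. On the unit $3$-torus $b=\bar a$, so exactly one root of (\ref{XTquad}) lies in the unit disc, and $1/(Px_4)$ has no pole at $x_4=0$. Up to sign, the $4$-torus integral is $2\pi i$ times the integral of the residue over $\tau'$, and the constant-term expansion gives $f_0$. The $G$-invariance is also fine: in the chart $x_i=X_i/X_5$ the $5$-cycle is a monomial automorphism of $(\CC^\times)^4$ of order $5$, hence of determinant $1$.

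The gap is the step you postpone to the end, and it carries the whole content of the lemma. You have shown that one particular $3$-form, known a priori to be regular only on the open piece $\overline{X}_{\mathbf a}\cap T$, has period $f_0$ over $\tau'$. But $\iota_\omega$ is defined by the holomorphic $3$-form on the compact threefold, and you never show that your residue form is a multiple of its restriction; your first paragraph simply asserts it. Remark \ref{logic} singles out exactly this Verrill-type computation and says it is not clear that the form extends over the resolution.

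Neither of your remedies closes this. Uniqueness of $H^{3,0}$ up to scalar applies only once the residue form is known to lie in $H^{3,0}$, which is the point at issue. The fact that $f_0$ satisfies $\mathcal L$ and is holomorphic at $0$ with value $1$ says nothing about the period of the global form. If you mean instead that the global period $\int_\tau\omega$ is holomorphic at $0$, that is precisely what has to be proved.

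The missing idea, which is the paper's entire proof, is monodromy invariance. The cycle $\tau$ is defined by a single recipe for every $\phi$ with $0<|\phi|<1/25$, so it is fixed by the monodromy $T_0$ about $\phi=0$. Hence $\iota_\omega(\tau)$ lies in the $T_0$-fixed subspace of $\ker\mathcal L$. Since $T_0=\exp(N)$ with $N$ a single nilpotent Jordan block on $\{\Psi_0,\ldots,\Psi_3\}$, that subspace is $\ker N=\CC\Psi_0=\CC\Phi_0$. Moreover $\iota_\omega(\tau)\neq 0$, because $\iota_\omega$ is an isomorphism and $\tau\neq 0$ (e.g. $\langle\sigma,\tau\rangle=\pm1$ by Lemma \ref{st}). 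Rescaling $\omega_\phi$ then gives the lemma, without ever knowing what the global form looks like on the torus.

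Your residue computation only becomes relevant if you want the explicit normalisation of $\omega_\phi$, or want to justify that $\mathcal L$ is the Picard--Fuchs operator in the first place (the logical issue raised in Remark \ref{logic}). For that you would have to prove the extension directly. One way is adjunction together with crepancy of the small resolution of the nodes: the Newton polytope of $P$ is the reflexive $A_4$ root polytope, so the closure of $P=0$ is anticanonical in the associated toric variety. Alternatively, work on the two-equation compactification $\mathrm{HV}_{\mathbf a}$ of \cite{COKM}, as the paper suggests.
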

\begin{proof} The element $\tau\in H_3(X_{\phi}(\CC), \ZZ)$ was {\em uniquely} defined, for any $\phi$ in the punctured open disc of radius $1/25$ centred at $0$. Hence the monodromy about $0$ fixes $\tau$, so $T_0$ fixes $\iota_{\omega}(\tau)$, which is then necessarily a non-zero multiple of $\Phi_0$.
\end{proof}
\begin{remar}\label{logic} Our presentation here misrepresents the logic. We have assumed that the Picard-Fuchs equation for the family $X_{\phi}$ is $\mathcal{L}f=0$, with $\mathcal{L}$ as above, but in fact one needs to know first that $f_0(\phi)=\sum_{n\geq 0} a_n\phi^n$, with $a_n=\sum_{i+j+k+l+m=n}\left(\frac{n!}{i!j!k!l!m!}\right)^2$, is the integral, over some three-cycle, of a holomorphic $3$-form on $\overline{X}_{\mathbf{a}}$, before deducing the exact formula for $\mathcal{L}$.

A computation identical to that of Verrill \cite[Propositions 5, 6]{Ve}, but with an additional variable, produces $f_0$ as the integral over $\tau$ of a certain holomorphic $3$-form on $(\overline{X}_{\mathbf{a}}\cap T)(\CC)$. But it is not clear that this form is the restriction of a form cohomologous to a holomorphic $3$-form on the whole of $\overline{X}_{\mathbf{a}}(\CC)$. The resolution step in the construction of $\overline{X}_{\mathbf{a}}$ is problematic for this.

As pointed out in \cite[Lemma 3.4]{HV}, by setting $X_6=-\sum_{i=1}^5 X_i$, $X_{\mathbf{a}}\cap T$ may be defined by two equations:
$$X_1+X_2+X_3+X_4+X_5+X_6=\frac{1}{X_1}+\frac{1}{X_2}+\frac{1}{X_3}+\frac{1}{X_4}+\frac{1}{X_5}+\frac{\phi^{-1}}{X_6}=0.$$ In \cite[\S 2.4, The Hulek-Verrill manifold $\mathrm{HV}$]{COKM}, Candelas et al. observe that the closure of this in an appropriate $5$-dimensional toric variety produces a nonsingular Calabi-Yau $3$-fold, with no need for a small resolution step in the construction. They are able, in \cite[\S 3.2]{COKM}, to show that $f_0$ is a period of a holomorphic $3$-form on this alternative compactification $\mathrm{HV}_{\mathbf{a}}$ (which they do not prove to be isomorphic to $\overline{X}_{\mathbf{a}}$). So to make our reasoning watertight we could replace $X_{\phi}$ by a suitable quotient of $\mathrm{HV}_{\mathbf{a}}$ in what follows. See \S\ref{2equations} below for more on this. \hfill$\triangle$
\end{remar}
By abuse of notation, let $T_0$ denote the monodromy operator of $c_0$ on $H_3(X_{\phi}(\CC), \ZZ)$, and let $$N=\log(T_0)=\log(I+(T_0-I))=(T_0-I)-(T_0-I)^2/2+(T_0-I)^3/3,$$ noting that $(T_0-I)^4=0$ (using the earlier matrix). Then $T_0=\exp(N)$ and, although $N$ may not preserve $H_3(X_{\phi}(\CC), \ZZ)$, it is integral at least away from the primes $2$ and $3$, so in particular it acts on $H_3(X_{\phi}(\CC), \ZZ_{(5)})$, and on $H_3(X_{\phi}(\CC), \ZZ_5)$.
\begin{prop}\label{sigm} Let $\omega_{\phi}$ be scaled as in Lemma \ref{holper}. To a very good approximation,
$$\iota_{\omega}(\sigma)=\mp(8\lambda\Psi_0-\frac{1}{2}\Psi_1-12\Psi_3).$$
\end{prop}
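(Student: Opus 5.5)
The plan is to combine the Picard--Lefschetz formula at the conifold point $\phi=\frac{1}{25}$ with Lemmas \ref{st} and \ref{holper}. Since $\sigma'$ is the vanishing cycle of the node at $(1,1,1,1,1)$ as $\phi\rightarrow\frac{1}{25}$, and the quotient map is unramified near that orbit of nodes, $\sigma$ is a vanishing $3$-sphere on $X_{\phi}(\CC)$. The local monodromy $T_{1/25}$ about $\phi=\frac{1}{25}$, acting on $H_3(X_{\phi}(\CC),\ZZ)$, should then be the symplectic transvection
$$T_{1/25}(x)=x+\varepsilon\langle x,\sigma\rangle\sigma,\qquad \varepsilon\in\{\pm1\}.$$
In odd dimension $3$ the self-intersection $\langle\sigma,\sigma\rangle$ is $0$, so $T_{1/25}$ is unipotent. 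The sign $\varepsilon$ depends only on conventions, which is why the statement carries a $\mp$.

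The first step is to apply this to $x=\tau$, the cycle of Lemma \ref{holper}. Lemma \ref{st} gives $\langle\tau,\sigma\rangle=\pm1$, so $T_{1/25}(\tau)-\tau=\pm\sigma$. Applying $\iota_{\omega}$, which respects monodromy, and using $\iota_{\omega}(\tau)=\Phi_0=\Psi_0$, gives
$$\iota_{\omega}(\sigma)=\pm\bigl(T_{1/25}\Psi_0-\Psi_0\bigr).$$
Here $T_{1/25}$ acts on functions by analytic continuation along a loop based near $0$ and encircling $\frac{1}{25}$ once. We must use the homotopy class of paths fixed earlier, so that $\tau$ is transported along the same path used to define $\sigma$ for $\phi$ slightly below $\frac{1}{25}$. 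Since $\tau$ was defined for every $|\phi|<\frac{1}{25}$, it can be transported along the real segment $(0,\frac{1}{25})$, so this is consistent.

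The second step is to compute the first column of the matrix of $T_{1/25}$ in the basis $\{\Psi_0,\Psi_1,\Psi_2,\Psi_3\}$. This is the numerical analytic continuation of van Straten, as used in \cite{COES}, with the rescaling $\bar\omega_2=2\Phi_2$, $\bar\omega_3=6\Phi_3$ taken into account. In practice I would take the Frobenius basis at $\phi=\frac{1}{25}$ for the Picard--Fuchs operator $\mathcal{L}$. Its local exponents there are $0,1,1,2$, and there is a unique solution, up to scale, with a logarithmic term. I would match the two bases numerically at an intermediate point, for example $\phi=\frac{1}{50}$, by evaluating the series and their first three derivatives. The monodromy then changes only the logarithmic solution at $\frac{1}{25}$.

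The coefficient of $\Psi_2$ should vanish. The coefficients of $\Psi_1$ and $\Psi_3$ should be numerically $-\frac12$ and $-12$ to high precision. The coefficient of $\Psi_0$ should be numerically equal to $8\zeta(3)/(2\pi i)^3=8\lambda$, and I would confirm this identification with an integer-relation search such as PSLQ against $\lambda$ and $1$. This yields $\iota_{\omega}(\sigma)=\mp(8\lambda\Psi_0-\tfrac12\Psi_1-12\Psi_3)$ ``to a very good approximation''.

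The main obstacle is the second step. It requires precise numerical analytic continuation, which could be made rigorous with error bounds as in Mezzarobba's work \cite{Mez}, and it requires recognising the $\Psi_0$-coefficient as exactly $8\lambda$. There is also some care needed to justify that the Picard--Lefschetz transvection holds with coefficient exactly $\langle x,\sigma\rangle$ after quotienting by $G$. The $10$ nodes in the orbit collapse to a single node of the quotient, and the $G$-action near the nodes is free, so the quotient family has a single ordinary double point at $\frac{1}{25}$ and the standard formula should apply.
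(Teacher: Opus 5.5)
There is a genuine gap in your first step. The Picard--Lefschetz formula you apply on $X_{\phi}$ has the wrong coefficient, because the geometry of the quotient near $\phi=\frac{1}{25}$ is not what you describe. At $\phi=\frac{1}{25}$ the Hulek--Verrill variety acquires a single node, at $(1:1:1:1:1)$. This point is fixed by every element of $G$, since both the cyclic shift and $X_i\mapsto 1/X_i$ fix it. So there is no free orbit of $10$ nodes, and the quotient map is not unramified there. Instead, $G$ preserves the vanishing sphere $\sigma'$, the projection $\sigma'\to\sigma$ is $10$-to-$1$ (so $\sigma=\sigma'/G$ is not a $3$-sphere), and $\pi_*\sigma'=10\sigma$. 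Consequently $X_{1/25}$ acquires a quotient of a node by a group of order $10$, not an ordinary double point. The local monodromy on $H_3(X_\phi(\CC),\ZZ)$ is therefore not $x\mapsto x\pm\langle x,\sigma\rangle\sigma$.

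The paper's proof avoids this by applying Picard--Lefschetz upstairs on $\overline{X}_{\mathbf{a}}$, where $\sigma'$ is an honest vanishing cycle. There $T_{1/25}(\tau')=\tau'+\langle\tau',\sigma'\rangle\sigma'=\tau'\mp\sigma'$, since there is a single transverse intersection point. Pushing forward, with $\pi_*\tau'=\tau$ (because $\tau'$ maps bijectively) and $\pi_*\sigma'=10\sigma$, gives $T_{1/25}(\tau)=\tau\mp 10\sigma$.

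With your coefficient you would get $\iota_{\omega}(\sigma)=\pm(T_{1/25}\Psi_0-\Psi_0)$. The numerical continuation does not return $8\lambda$, $-\frac{1}{2}$, $-12$ for these coefficients. Van Straten's computation, which the paper quotes, gives $T_{1/25}\Psi_0-\Psi_0\approx 80\lambda\Psi_0-5\Psi_1-120\Psi_3$, which is ten times the target. Your second step simply asserts the desired coefficients, and that hides the discrepancy. Dividing by $\mp 10$, as the corrected formula requires, gives exactly the statement.

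A consistency check that would have flagged this: $\sigma$ is primitive by Lemma \ref{st}, so a transvection with coefficient $\pm1$ along $\sigma$ could not be trivial modulo $5$. Yet Section 2 records $\overline{M}(c_{1/25})=I_4$. The remark after Proposition \ref{Basis} confirms this, showing entries $-10$ and $\mp 10$ in the matrix of $T_{1/25}$ on $\{\sigma,N\sigma,N^2\sigma,\tau\}$.
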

\begin{proof} The idea is due to Van Straten \cite{vS}, who computed, to a high degree of accuracy, that 
$$T_{1/25}(\Psi_0)\approx (80\lambda+1)\Psi_0-5\Psi_1-120\Psi_3.$$
Using Lemma \ref{holper} we can read this as $$T_{1/25}(\tau)\approx \tau+\delta,$$
where $$\iota_{\omega}(\delta)=80\lambda\Psi_0-5\Psi_1-120\Psi_3.$$
Up on $\overline{X}_{\mathbf{a}}$, $\sigma'$ is a vanishing cycle as $\phi\rightarrow \frac{1}{25}$, so by the Picard-Lefschetz formula we have $$T_{1/25}(\tau')=\tau'+\langle \tau',\sigma'\rangle \sigma'=\tau'\mp\sigma',$$ by the proof of Lemma \ref{st}. Pushing forward to $X_{\phi}$,
$$T_{1/25}(\tau)=\tau\mp 10\sigma,$$ so
$\delta=\mp 10\sigma$.
\end{proof}
\begin{lem}\label{12}
$N^3\sigma=\pm 12\tau$.
\end{lem}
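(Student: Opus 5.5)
The plan is to compute $N^3\sigma$ through the isomorphism $\iota_{\omega}$, which respects monodromy and hence also $N=\log(T_0)$, because $N$ is a polynomial in $T_0$. Proposition \ref{sigm} tells us that
$$\iota_{\omega}(\sigma)=\mp\left(8\lambda\Psi_0-\tfrac{1}{2}\Psi_1-12\Psi_3\right)$$
to a very good approximation. With respect to the basis $\{\Psi_0,\Psi_1,\Psi_2,\Psi_3\}$, the operator $N$ is the nilpotent matrix displayed earlier. So $N$ sends $\Psi_j\mapsto\Psi_{j-1}$ for $j\geq 1$ and kills $\Psi_0$, reading the matrix as acting on coordinate column vectors. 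It follows that $N^3$ kills $\Psi_0,\Psi_1,\Psi_2$ and sends $\Psi_3$ to $\Psi_0$. Applying $N^3$ to the expression above gives
$$\iota_{\omega}(N^3\sigma)=\pm 12\Psi_0=\pm 12\,\iota_{\omega}(\tau),$$
by Lemma \ref{holper}. Injectivity of $\iota_{\omega}$ then gives $N^3\sigma=\pm12\tau$ in $H_3(X_{\phi}(\CC),\Q)$.

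Before this, I would check the convention. The monodromy $\log\phi\mapsto\log\phi+2\pi i$ sends $\Psi_1\mapsto\Psi_1+\Psi_0$, $\Psi_2\mapsto\Psi_2+\Psi_1+\tfrac12\Psi_0$, and $\Psi_3\mapsto\Psi_3+\Psi_2+\tfrac12\Psi_1+\tfrac16\Psi_0$. Hence on functions $N$ lowers the index, so $N^3\Psi_3=\Psi_0$ and all other $N^3\Psi_j$ vanish, independently of transpose issues. Since $\iota_{\omega}$ is monodromy-equivariant, the same holds for the action on homology.

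The real obstacle is that Proposition \ref{sigm} is only approximate, coming from van Straten's numerics. I would handle this using integrality and exactness, as follows.
\begin{itemize}
\item $N^3=(T_0-I)^3$, because $(T_0-I)^4=0$ kills the higher terms in the expansion of $\log$. So $N^3\sigma=(T_0-I)^3\sigma$ is an integral homology class.
\item $(T_0-I)^3$ has one-dimensional image. On the Frobenius side this image is spanned by $\Psi_0=\iota_{\omega}(\tau)$, since $\Psi_0$ is the only monodromy-invariant direction.
\item Therefore $N^3\sigma=m\tau$ for some exact $m\in\Q$.
\end{itemize}

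It remains to pin down $m$. For this I would pair $N^3\sigma$ with $\sigma$ and use Lemma \ref{st}: this gives $m\langle\tau,\sigma\rangle=\pm m$, which is an integer. Alternatively, $m$ is a coefficient computed from the monodromy data, and it is rational with bounded denominator. Either way, the numerical value $\pm12$ from Proposition \ref{sigm} is then exact, because a rational number with bounded denominator that is known to high accuracy is determined. The sign is left ambiguous, as in the statement.
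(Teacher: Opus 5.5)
Your proposal is correct and follows essentially the same route as the paper: first the exact proportionality $N^3\sigma=\mu\tau$ from $\mathrm{Im}(N^3)=\ker(N)=\langle\tau\rangle$, then an integrality constraint on $\mu$ via Lemma \ref{st}, then the numerics of Proposition \ref{sigm} to force $\mu=\pm 12$. Your observation that $N^3=(T_0-I)^3$ holds exactly (because $(T_0-I)^4=0$) is a small improvement: pairing with $\sigma$ gives $\langle N^3\sigma,\sigma\rangle=\pm\mu\in\ZZ$ directly, whereas the paper only extracts $6\mu\in\ZZ$ from $N^3=6(T_0-(I+N+N^2/2))$.
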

\begin{proof} From the earlier matrix for $N$ with respect to the basis $\{\Psi_0,\Psi_1,\Psi_2,\Psi_3\}$ of $H_3(X_{\phi}(\CC), \CC)$, we know that $N^4=0$ and that $\mathrm{Im}(N^3)=\ker(N)$ is $1$-dimensional, spanned by $\tau$ (equivalent to $\Psi_0$, according to Lemma \ref{holper}). So $N^3\sigma$ is necessarily a multiple of $\tau$. Using $N^3=6(T_0-(I+N+N^2/2))$ and $N=(T_0-I)-(T_0-I)^2/2+(T_0-I)^3/3$, we see that if
$N^3\sigma=\mu\tau$ then $6\mu\in\ZZ$. (We know, by Lemma \ref{st}, that $\tau$ is not divisible in $H_3(X_{\phi}(\CC), \ZZ)$ by any integer other than $\pm 1$.)

Recall that, with respect to the basis $\{\Psi_0,\Psi_1,\Psi_2,\Psi_3\}$, $N$ is represented by the matrix $\begin{pmatrix}0&1&0&0\\0&0&1&0\\0&0&0&1\\0&0&0&0\end{pmatrix}.$ It follows from Proposition \ref{sigm} and Lemma \ref{holper} that $N^3\sigma\approx \pm 12\tau$, i.e. $\mu\approx \pm 12$, but the approximation is good enough that $6\mu\in\ZZ$ forces $\mu=\pm 12$, exactly.
\end{proof}
\begin{lem}\label{ints} On the basis $\{\sigma, N\sigma, N^2\sigma, \tau\}$ for $H_3(X_{\phi}(\CC), \QQ)$, the matrix of the intersection pairing is of the form 
$$\begin{pmatrix} 0 & -a & 0 & \pm 1\\a & 0 & -12 & 0\\0 &  12 & 0 & 0\\\mp 1 & 0 & 0 & 0\end{pmatrix}.$$
\end{lem}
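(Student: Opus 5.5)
We need the intersection form on $\{\sigma,N\sigma,N^2\sigma,\tau\}$. The tools are that the pairing $\langle\,,\rangle$ is alternating and preserved by monodromy, so that $\langle T_0x,T_0y\rangle=\langle x,y\rangle$. Since $N=\log T_0$ is a polynomial in $T_0-I$ and the pairing is $T_0$-invariant, $N$ is skew-adjoint: $\langle Nx,y\rangle=-\langle x,Ny\rangle$. This holds over $\QQ$, which suffices because we only compute rational values. From skew-adjointness we get $\langle N^ix,N^jy\rangle=(-1)^i\langle x,N^{i+j}y\rangle$. Alternation gives zeros on the diagonal and antisymmetry of the matrix, so only the entries above the diagonal need computing.

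The entries are then obtained in turn.

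First, $\langle\sigma,\tau\rangle=\pm1$ by Lemma \ref{st}.

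Second, since $N\tau=0$ (Lemma \ref{holper}: $\tau$ is fixed by $T_0$, so $N\tau=0$), we get $\langle N\sigma,\tau\rangle=-\langle\sigma,N\tau\rangle=0$. Similarly $\langle N^2\sigma,\tau\rangle=0$.

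Third, $\langle\sigma,N^2\sigma\rangle=-\langle N\sigma,N\sigma\rangle=0$ by alternation.

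Fourth, $\langle N\sigma,N^2\sigma\rangle=-\langle\sigma,N^3\sigma\rangle$. By Lemma \ref{12}, $N^3\sigma=\pm12\tau$, so this equals $\mp12\langle\sigma,\tau\rangle$. The two signs in Lemmas \ref{st} and \ref{12} must be correlated for the entry to come out as exactly $-12$ as displayed. Both signs come from the same source: in Proposition \ref{sigm}, $\iota_\omega(\sigma)=\mp(\dots)$ with the $\mp$ fixed by $\langle\tau',\sigma'\rangle$ via Picard--Lefschetz. Indeed $N^3$ applied to $\mp(8\lambda\Psi_0-\frac12\Psi_1-12\Psi_3)$ gives $\pm12\Psi_0=\pm12\tau$, where the sign is tied to $\langle\sigma,\tau\rangle$. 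Tracking it, $T_{1/25}\tau=\tau+\langle\tau',\sigma'\rangle\sigma'$ pushes forward to $\tau+10\langle\tau,\sigma\rangle\sigma$. Hence $\delta=10\langle\tau,\sigma\rangle\sigma$, so $\iota_\omega(\sigma)=\langle\tau,\sigma\rangle(8\lambda\Psi_0-\frac12\Psi_1-12\Psi_3)$, using $\langle\tau,\sigma\rangle^{-1}=\langle\tau,\sigma\rangle$. Then $N^3\sigma=-12\langle\tau,\sigma\rangle\tau=12\langle\sigma,\tau\rangle\tau$. Therefore $\langle N\sigma,N^2\sigma\rangle=-12\langle\sigma,\tau\rangle^2=-12$, matching the displayed matrix (row $N\sigma$, column $N^2\sigma$).

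Finally, $\langle\sigma,N\sigma\rangle=-a$ is left as an unknown. It is some rational number, and presumably is pinned down later, in the paper's continuation, via integrality and unimodularity.

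The main obstacle is this sign bookkeeping: making sure the $\pm$ in Lemma \ref{12} is consistently the same as that of Lemma \ref{st}, so the $(2,3)$ entry is unambiguously $-12$. The rest is formal. One further point must be justified: that skew-adjointness of $N$ holds. If $\langle T_0x,T_0y\rangle=\langle x,y\rangle$ for all $x,y$, then $\langle T_0^kx,y\rangle=\langle x,T_0^{-k}y\rangle$. Applying $\log$, a power series in $T_0-I$ that is nilpotent here, yields $\langle Nx,y\rangle=\langle x,\log(T_0^{-1})y\rangle=-\langle x,Ny\rangle$, since $T_0^{-1}=\exp(-N)$.
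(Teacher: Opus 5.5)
Your proposal is correct and follows the paper's proof essentially step for step: skew-adjointness of $N$, alternation of the pairing, Lemma \ref{st} and Lemma \ref{12} determine every entry except $\langle\sigma,N\sigma\rangle$. The differences are only cosmetic. You get $\langle N\sigma,\tau\rangle=\langle N^2\sigma,\tau\rangle=0$ directly from $N\tau=0$, where the paper writes $\tau=\pm\frac{1}{12}N^3\sigma$ and uses $N^4=0$. You derive skew-adjointness from $T_0$-invariance of the pairing rather than citing it. And you make explicit that the signs in Lemmas \ref{st} and \ref{12} agree, which the paper uses implicitly in its final line.
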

\begin{proof} For any $x,y\in H_3(X_{\phi}(\CC), \QQ)$ we have 
$$\langle Nx,y\rangle = -\langle x, Ny\rangle,$$ as in \cite[Appendix A]{Mo}. Thus by Lemma \ref{12}
$$\langle N\sigma, \tau\rangle=\pm\frac{1}{12}\langle N\sigma, N^3\sigma\rangle=\mp\frac{1}{12}\langle\sigma, N^4\sigma\rangle=0,$$ since $N^4=0$. Similarly $\langle N^2\sigma, \tau\rangle =0$, and $\langle \sigma, N^2\sigma\rangle =-\langle N\sigma, N\sigma\rangle =0$, while 
$$\langle N\sigma, N^2\sigma\rangle=-\langle \sigma, N^3\sigma\rangle=-\langle \sigma, \pm 12\tau\rangle =-12.$$
\end{proof} 
If we apply $\iota_{\omega}$ to $\{\sigma, N\sigma, N^2\sigma, \tau\}$, we get, to a very good approximation at least, $\{v_0, v_1, 12v_2, v_3\}$. Recall that according to Conjecture \ref{basis}, $\{v_0, v_1, v_2, v_3\}$ is (the image under $\iota_{\omega}$ of) a $\ZZ$-basis for $H_3(X_{\phi}(\CC), \ZZ)$.  We shall now see that this is true at least locally at $5$, in fact the same proof shows it is true locally away from $2$ and $3$.
\begin{prop}\label{Basis} The $\ZZ_{(5)}$-span of $\{\sigma, N\sigma, N^2\sigma, \tau\}$ is $H_3(X_{\phi}(\CC), \ZZ_{(5)})$.
\end{prop}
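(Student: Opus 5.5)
The plan is a discriminant argument with the intersection pairing. Let $\Lambda:=H_3(X_{\phi}(\CC),\ZZ_{(5)})$ (torsion removed) and let $\Lambda'\subseteq\Lambda$ be the $\ZZ_{(5)}$-span of $\{\sigma, N\sigma, N^2\sigma, \tau\}$. By Poincar\'e duality the intersection pairing on $H_3(X_{\phi}(\CC),\ZZ)/\{\mathrm{tors}\}$ is unimodular, so on $\Lambda$ its Gram determinant with respect to any $\ZZ_{(5)}$-basis is a unit in $\ZZ_{(5)}$.

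First check that $\Lambda'\subseteq\Lambda$. Clearly $\sigma,\tau\in H_3(X_{\phi}(\CC),\ZZ)$. Moreover $N=(T_0-I)-(T_0-I)^2/2+(T_0-I)^3/3$, and $T_0$ preserves integral homology. So $N$ preserves $\Lambda$, as already remarked before Lemma \ref{12}, and hence $N\sigma, N^2\sigma\in\Lambda$.

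Next check that the four elements are linearly independent over $\QQ$. This follows from the Gram matrix in Lemma \ref{ints}, whose determinant we compute. Expanding the antisymmetric matrix gives Pfaffian $\pm 12\cdot(\pm1)$, up to terms killed by the zero entries. Concretely, the determinant is $\bigl((\pm1)\cdot 12\bigr)^2=144$, independent of the unknown $a$. Indeed, the Pfaffian of $\begin{pmatrix}0&-a&0&\pm1\\a&0&-12&0\\0&12&0&0\\\mp1&0&0&0\end{pmatrix}$ is $m_{12}m_{34}-m_{13}m_{24}+m_{14}m_{23}=(-a)(0)-0\cdot 0+(\pm1)(-12)=\mp12$. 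Since $144$ is a $5$-adic unit, the four elements form a basis of $\Lambda\otimes\QQ$.

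Finally, compare discriminants. For lattices $\Lambda'\subseteq\Lambda$ of full rank, $\det(\text{Gram}(\Lambda'))=[\Lambda:\Lambda']^2\det(\text{Gram}(\Lambda))$, with the index understood as a $\ZZ_{(5)}$-ideal. The left side is $144$, a unit, so the index is a unit and $\Lambda'=\Lambda$. The main point needing care is that Lemma \ref{ints} relies on Lemma \ref{12} ($N^3\sigma=\pm12\tau$ exactly, not just approximately) and on Lemma \ref{st}; both are already established, and the unknown entry $a$ drops out of the determinant. I also need to make sure unimodularity is applied after quotienting by torsion, which is the standard form of Poincar\'e duality for a compact oriented $6$-manifold. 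The same argument works at every prime other than $2$ and $3$, where $N$ may fail to be integral or $144$ fails to be a unit.
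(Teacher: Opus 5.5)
Your proposal is correct and is the same argument as the paper's: the span is a sublattice because $N$ is $5$-integral, and the Gram matrix of Lemma \ref{ints} has determinant $144$ (Pfaffian $\mp 12$, independent of $a$), a $5$-adic unit, so unimodularity of the intersection pairing forces the index to be a unit. You make explicit the determinant computation that the paper leaves implicit; the paper's ``$\ord_5(12)=1$'' should be read as saying that $12$ is prime to $5$.
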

\begin{proof} Define $L$ to be the $\ZZ_{(5)}$-span of $\{\sigma, N\sigma, N^2\sigma, \tau\}$. By construction, this is a sublattice of $H_3(X_{\phi}(\CC), \ZZ_{(5)})$. Since $\ord_5(12)=1$ and the intersection pairing on $H_3(X_{\phi}(\CC), \ZZ)$ is unimodular, Lemma \ref{ints} shows that $L$ must already be the whole of $H_3(X_{\phi}(\CC), \ZZ_{(5)})$. 
\end{proof}
\begin{remar} Using computations of van Straten \cite{vS}, the matrix representing $T_{1/25}$ with respect to the basis $\{\sigma, N\sigma, N^2\sigma, \tau\}$ is 
$$\begin{pmatrix} 1&-10&0&\mp 10\\0&1&0&0\\0&0&1&0\\0&0&0&1\end{pmatrix}.$$
In particular, $N\sigma\mapsto N\sigma -10\sigma$. According to the Picard-Lefschetz formula, up on $\overline{X}_{\mathbf{a}}(\CC)$ we have $N\sigma'\mapsto N\sigma'+\langle N\sigma',\sigma'\rangle\sigma'$. Pushing forward to $X_{\phi}(\CC)$ and dividing by $10$, we find that $\langle N\sigma',\sigma'\rangle=-10$.
Since the homology class $\sigma'$ is $G$-fixed, its Poincar\'e dual is the pullback of that of $\sigma$. Applying $N$, the Poincar\'e dual of $N\sigma'$ is the pullback of that of $N\sigma$. Wedging together Poincar\'e duals and dividing by the fundamental class recovers intersection products, but the $10$-to-$1$ projection map sends the fundamental class of $X_{\phi}$ to $10$ times that of $\overline{X}_{\mathbf{a}}$. It follows that $\langle N\sigma, \sigma\rangle=\frac{1}{10}\langle N\sigma',\sigma'\rangle=-1$. Hence $a=-1$, and the intersection matrix becomes
$$\begin{pmatrix} 0 & 1 & 0 & \pm 1\\-1 & 0 & -12 & 0\\0 & 12 & 0 & 0\\ \mp 1 & 0 & 0 & 0\end{pmatrix}.$$
Now choosing the basis $\{\sigma, N\sigma\pm\tau, \frac{1}{12}N^2\sigma, \tau\}$ will produce an intersection matrix $$\begin{pmatrix} 0 & 0 & 0 & \pm 1\\0 & 0 & -1 & 0\\0 &  1 & 0 & 0\\ \mp 1 & 0 & 0 & 0\end{pmatrix}.$$
\end{remar}
\begin{remar}\label{sgn} The basis in \cite[\S 3.2]{COES} is  $\{\pm\sigma , \mp N\sigma +\tau, \frac{1}{12}N^2\sigma, \tau\}$. (Recall that $v_1=-v_1'+v_3$.) With respect to this basis, the intersection matrix is the same as above, multiplied by $\pm 1$. But the monodromy matrices with respect to this basis, whose reductions mod $5$ appeared in \S 2 above, will only preserve this symplectic form if it is in standard form, forcing $\pm 1=-1$, i.e. $\langle\sigma, \tau\rangle =-1$. (Check the matrix $B$, for instance.)\hfill$\triangle$
\end{remar}

\section{Identifying a $2$-dimensional mod $5$ Galois representation}
\begin{lem}\label{goodred}
\end{lem}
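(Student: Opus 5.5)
I read the (empty-bodied) Lemma \ref{goodred} as the claim that $X_{-\frac{1}{7}}$ has good reduction at every prime $p\notin\{2,7\}$. Consequently $\rho_5$ would be unramified outside $\{2,5,7\}$ and crystalline at $5$ with Hodge--Tate weights $0,1,2,3$ (each of multiplicity one, since all Hodge numbers $h^{3-i,i}$ equal $1$). This reading is a guess from the label and from the Introduction's reference to ``$2$ and $7$ (the primes of bad reduction)''. Under it, the proof is an explicit degeneration analysis over $\ZZ[\frac{1}{14}]$ followed by standard comparison theorems.

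\textbf{Step 1: the upstairs model in toric coordinates.} I would work on the Hulek--Verrill side, with $\mathbf{a}=(1:1:1:1:1:\phi^{-1})=(1:1:1:1:1:-7)$. The toric ambient space and the equation
$$(X_1+\cdots+X_5)(X_1^{-1}+\cdots+X_5^{-1})=-7$$
are both defined over $\ZZ$. The singularities of the fibres occur exactly when $\phi^{-1}\in\{0,1,9,25,\infty\}$, that is, at the nodes coming from points with all $X_i=\pm1$ (up to scaling), together with the boundary strata.

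The reduction mod $p$ is singular, beyond the $30$ nodes that are resolved uniformly, only if $-7\equiv 0,1,9,25\pmod p$. Since $-7$, $-8=-2^3$, $-16=-2^4$ and $-32=-2^5$ have prime factors only $2$ and $7$, this happens only for $p\in\{2,7\}$.

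I would then check that the small resolution of the $30$ nodes (equivalently, the two-equation compactification $\mathrm{HV}_{\mathbf{a}}$ of Remark \ref{logic}) is defined over $\ZZ[\frac{1}{14}]$. The aim is a smooth proper model of $\overline{X}_{\mathbf{a}}$ there. The local analysis of the nodes is characteristic-free away from these primes, because the relevant Jacobian minors are, up to sign, products of the quantities above.

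\textbf{Step 2: passing to the quotient.} Next I need the group $G$ of order $10$ to act on this model with the same fixed-locus structure in every fibre over $\ZZ[\frac{1}{14}]$, so that the quotient $X_{-\frac{1}{7}}$ (or its crepant resolution as used in \cite{COES}) also extends smoothly.

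I expect this to be the main obstacle, at the primes $p=5$ and $p=2$ dividing $|G|$. At $p=5$ the cyclic shift of order $5$ is wild, and $\phi=\frac{1}{25}$ reduces to $\infty$ modulo $5$, so I must check directly that the shift acts freely on the mod-$5$ fibre. Its fixed points satisfy $X_i=\zeta^iX$ with $\zeta^5=1$, which mod $5$ collapses to all $X_i$ equal. Such points lie on the fibre only if $25=-7$, i.e. $32\equiv0$, which fails mod $5$. So the fixed locus is empty in characteristic $5$, and the quotient by the order-$5$ part is étale there.

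For the involution at odd $p$, tameness suffices: I would verify that its fixed locus is the same reduced subscheme in each fibre, so that the quotient, or the resolution of its $A_1$-type singularities, is smooth over $\ZZ_{(p)}$.

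\textbf{Step 3: conclusions.} With a smooth proper model over $\ZZ[\frac{1}{14}]$ in hand, smooth and proper base change gives that $\rho_5$ is unramified at every $p\notin\{2,5,7\}$. Faltings' crystalline comparison theorem (Fontaine--Messing suffices, since $3<5-1$) gives that $\rho_5$ is crystalline at $5$ with the stated Hodge--Tate weights. These are precisely the inputs needed for the conductor bounds at $2$ and $7$ and for the Fontaine--Laffaille arguments later on.
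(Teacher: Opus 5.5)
Your proposal is correct. Steps 1 and 3 are the paper's argument. The paper cites \cite[Lemma 6.2]{HV} for good reduction of $\overline{X}_{\mathbf{a}}$ (resolution included) away from primes where the parameter meets $\Sigma$. It does exactly your arithmetic: $-\frac{1}{7}\equiv\infty\pmod 7$, $1+\frac17=\frac{2^3}{7}$, $\frac19+\frac17=\frac{2^4}{63}$, $\frac1{25}+\frac17=\frac{2^5}{175}$. It then applies smooth proper base change and Fontaine--Messing to $H^3_{\text{\'et}}(\overline{X}_{-\frac17,\Qbar},\Q_5)$.

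Where you differ is Step 2, which the paper avoids entirely. $H^3_{\text{\'et}}(X_{-\frac17,\Qbar},\Q_5)$ is the subspace of $G$-invariants of the upstairs $H^3$, hence a $G_{\Q}$-subrepresentation. Being unramified at $p$, or crystalline at $5$, passes to subrepresentations. So the obstacle you single out, the wild action of the $5$-cycle at $5$, never has to be faced.

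Your route yields more: a smooth proper model of $X_{-\frac17}$ itself over $\ZZ[\frac{1}{14}]$. That is in fact true. $G$ acts freely, consistent with $\chi(\overline{X}_{\mathbf{a}})/10=80/10=8=2(h^{1,1}-h^{1,2})$ for $X_\phi$ with $\kappa=1$, so no resolution of $A_1$-singularities is needed.

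As written, however, you have only checked torus points fixed by the shift $s$. To finish Step 2 you still need three things:
\begin{itemize}
\item the inversion $\iota$, whose torus fixed points are $(\pm1:\cdots:\pm1)$;
\item the order-$10$ elements $s^k\iota$, whose torus fixed points are only $(1:\cdots:1)$;
\item fixed points on boundary strata and on the exceptional curves.
\end{itemize}
The first two kinds of point lie on the fibre only if $-7\equiv 1,9,25\pmod p$, i.e.\ $p=2$. For the boundary: a cone stable under $g\neq 1$ would give a nonzero $g$-fixed vector in $\RR^5/\RR(1,\dots,1)$, and none exists. This also shows no node is fixed, so no point on an exceptional curve is fixed.

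Since the lemma only uses rational properties of $\rho_5$, this extra integral information is not needed here. The same holds for the later Fontaine--Laffaille step, which applies to any $G_{\Q_5}$-stable lattice in a crystalline representation with weights in $[0,3]$.
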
 The set of primes for which $\rho_5$ is ramified is contained in $\{2,5,7\}$. It is crystalline at $5$.
\begin{proof}
For the determination of the primes of bad reduction of the Hulek-Verrill threefold $\overline{X}_{\mathbf{a}}$ in general, see \cite[Lemma 6.2]{HV}. In particular, their proof contains words of reassurance about resolutions still working in characteristic $p$.
 
To identify the primes of bad reduction for $\overline{X}_{-\frac{1}{7}}$, we just need to look for primes modulo which the parameter $-\frac{1}{7}$ becomes the same as one of the elements of $\Sigma=\{0,1,\infty, \frac{1}{9}, \frac{1}{25}\}$. It coincides with $\infty\pmod{7}$, while $1-\left(-\frac{1}{7}\right)=\frac{2^3}{7}$, $\frac{1}{9}-\left(-\frac{1}{7}\right)=\frac{2^4}{63}$ and $\frac{1}{25}-\left(-\frac{1}{7}\right)=\frac{2^5}{175}$. So the primes of bad reduction are $2$ and $7$. It follows that $H^3_{\text{\'et}}(\overline{X}_{-\frac{1}{7},\Qbar}, \Q_5)$ is unramified outside $\{2,5,7\}$, and is crystalline at $5$, the latter by a theorem of Fontaine and Messing \cite[Theorem A(ii)(a), Theorem B]{FM}, comparing crystalline cohomology and $p$-adic \'etale cohomology. The subspace $H^3_{\text{\'et}}(X_{-\frac{1}{7},\Qbar}, \Q_5)$ of invariants under the automorphism group of order $10$ has the same properties.   
\end{proof}

The residual $4$-dimensional representation $\rhobar_5$ has image contained in $P$.
Projecting to the Levi subgroup $L\simeq {\FF_5}^{\times}\times\GL_2(\FF_5)$ we obtain a character $\chi: G_{\Q}\rightarrow {\FF_5}^{\times}$ and a $2$-dimensional representation $\rho': G_{\Q}\rightarrow\GL_2(\FF_5)$.

As just noted, the $5$-adic Galois representation $\rho_5$ is crystalline. The set of Hodge-Tate weights is $\{0,1,2,3\}$. The prime $5$ is just big enough compared to the length of the Hodge filtration that the theory of Fontaine-Lafaille \cite{FL} applies, so the Hodge-Tate weights can be recovered from $\rhobar_5$. In particular, $\chi$ must be of the form
$\chi=\epsilon^a\psi$, where $\epsilon$ is the mod $5$ cyclotomic character, $a\in\{0,-1,-2,-3\}$ and $\psi$ is a finite-order character of conductor divisible at most by primes $2$ and $7$. (That a character to $\FF_5^{\times}$ of conductor divisible at most by $2,5$ and $7$ must be of this form is elementary, but it is useful to keep track of the Hodge-Tate weights for accounting purposes in the proof of Proposition \ref{Zp} below.)

The character $\chi=\epsilon^a\psi$ is among the composition factors of $\rhobar_5$. By Poincar\'e duality, so is $\epsilon^{-3}\chi^{-1}=\epsilon^{-3-a}\psi^{-1}$, so the possibilities for $\chi$ come in pairs. Given a prime $p\notin\{2,5,7\}$, let $Y_p:=\chi(\Frob_p^{-1})+\epsilon^{-3}\chi^{-1}(\Frob_p^{-1})$ and $Z_p:=\tr(\rho'(\Frob_p^{-1}))$. So $\tr(\rhobar_5(\Frob_p^{-1}))=Y_p+Z_p$.
Here $\Frob_p$ reduces mod $p$ to the $p^{\mathrm{th}}$-power automorphism of $\Fbar_p$.

\begin{prop}\label{Yp} For all primes $p\notin\{2,5,7\}$, $Y_p=1+p^3$, i.e. $\{\chi,\epsilon^{-3}\chi^{-1}\}=\{\mathrm{id}, \epsilon^{-3}\}$.
\end{prop}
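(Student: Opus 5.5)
The plan is to pin down $\chi$ by enumerating the finitely many possibilities and eliminating all but one using Frobenius traces. By the discussion above, $\chi=\epsilon^a\psi$ with $a\in\{0,-1,-2,-3\}$ and $\psi$ a character $G_{\Q}\to\FF_5^{\times}$ unramified outside $\{2,7\}$. Since $\FF_5^{\times}$ is cyclic of order $4$, $\psi$ factors through the maximal exponent-$4$ quotient of $(\Z/2^{e}\Z)^{\times}\times(\Z/7\Z)^{\times}$. Tame ramification at $7$ only allows characters of $(\Z/7)^\times$ of order dividing $\gcd(6,4)=2$. At $2$ the conductor exponent is a priori unbounded, so I will bound it using the Fontaine--Lafaille/conductor information, or alternatively the known semistable or bounded-wild reduction of $\overline{X}_{\mathbf{a}}$ at $2$. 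This should leave $\psi$ among the characters of $(\Z/16)^\times\times(\Z/7)^\times$ (or $(\Z/8)^\times\times(\Z/7)^\times$) of order dividing $4$, a list of a couple of dozen characters.

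For each pair $\{\chi,\epsilon^{-3}\chi^{-1}\}$ on the list, I compute the predicted $Y_p\in\FF_5$ and compare it with the data. Rather than using $Y_p$ alone, I use the full relation $\tr\rhobar_5(\Frob_p^{-1})=Y_p+Z_p$ together with the structure of $\rho'$. Here $\rho'$ has determinant $\epsilon^{-3}$ (from the symplectic similitude, $t=\det$ equal to the multiplier, which is $\epsilon^{-3}$). So the characteristic polynomial of $\rhobar_5(\Frob_p^{-1})$ mod $5$ must factor as
\[
(X-\chi(\Frob_p^{-1}))(X-\epsilon^{-3}\chi^{-1}(\Frob_p^{-1}))(X^2-Z_pX+p^3).
\]
The characteristic polynomials of Frobenius computed in \cite{COES}, to be justified independently in Section 5, give $\tr\rhobar_5(\Frob_p^{-1})$ and the whole mod-$5$ characteristic polynomial for small primes $p$. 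For each candidate $\chi$ I check whether $\chi(\Frob_p^{-1})$ is a root of the reduced quartic. Choosing primes $p$ in suitable residue classes mod $16$, $7$ and $5$ should kill every candidate except $\chi=\mathrm{id}$ or $\chi=\epsilon^{-3}$, which give $Y_p=1+p^3$.

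Finally, the candidates cut out by the characters form a finite abelian extension of $\Q$ of conductor dividing $2^{e}\cdot 5\cdot 7$. Two such characters that agree on $\Frob_p$ for $p$ running through a set of representatives of the relevant ray class group are equal. So once $Y_p=1+p^3$ is verified for a finite explicit set of primes, it holds for all $p\notin\{2,5,7\}$ by Chebotarev (indeed by class field theory over $\Q$). The step I expect to be the main obstacle is bounding the conductor of $\psi$ at $2$, since wild ramification there is not controlled by Fontaine--Lafaille. I would first try to extract it from the reduction type of the Hulek--Verrill model at $2$ via \cite{HV}; failing that, from a general bound on the Artin conductor at $2$ of $H^3$. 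A secondary concern is ensuring the Frobenius data used is unconditional, which Section 5 is meant to supply.
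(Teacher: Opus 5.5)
Your overall strategy is the same as the paper's. You write $\chi=\epsilon^a\psi$, list the finitely many $\psi$, and eliminate candidates using the mod-$5$ characteristic polynomials of Frobenius at a few primes made unconditional in Section 5; the paper uses $p=31,113,29,13,17$. Your test that $\chi(\Frob_p^{-1})$ be a root of the reduced quartic is equivalent to the paper's test $Y_p\in\{\alpha p,\beta\}$. Indeed, $x=\chi(\Frob_p^{-1})$ is a root of a factor $X^2-cX+p^3$ exactly when $x+p^3x^{-1}=c$.

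The gap is the step you flag as the main obstacle, and the remedies you propose for it are not available. The paper establishes nothing about the reduction of $\overline{X}_{\mathbf{a}}$ at $2$ beyond $2$ being a bad prime, and Fontaine--Laffaille says nothing at $2$. No such input is needed, however, because the $2$-part of the conductor of $\psi$ is not a priori unbounded:
\begin{itemize}
\item Since $\psi$ takes values in $\FF_5^{\times}$, which has order $4$, its restriction to $\Z_2^{\times}$ is trivial on $(\Z_2^{\times})^4=(1+4\Z_2)^4=1+16\Z_2$. So its $2$-conductor divides $16$.
\item At $7$ it kills the pro-$7$ group $1+7\Z_7$ and has order dividing $2$ on $\FF_7^{\times}$.
\end{itemize}
This is exactly the paper's one-line argument ``since $\#\FF_5^{\times}=4$, $b\le 4$ and $c\le 1$'', which gives $\psi=\psi_1^d\psi_2^e\psi_3^f$. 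It is already implicit in your own remark about the maximal exponent-$4$ quotient of $(\Z/2^e\Z)^{\times}$, since that quotient is $(\Z/2^{\min(e,4)}\Z)^{\times}$ for every $e$.

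Your closing Chebotarev paragraph is also superfluous. The Frobenius data in general only determine the unordered pair $\{Y_p,Z_p\}$, so the work is done by eliminating candidate characters, not by verifying $Y_p$ prime by prime. Once $\chi\in\{\mathrm{id},\epsilon^{-3}\}$ is established as an identity of characters, $Y_p=1+p^3$ holds for all $p\notin\{2,5,7\}$ at once.
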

\begin{proof}
The cyclotomic character is such that for any prime $p\neq 5$, $\epsilon(\Frob_p)=p\pmod{5}$. Identifying $\Frob_p$ with $p$, $\epsilon$ may be viewed as a homomorphism $\epsilon:\Q^{\times}\backslash\A_{\Q}^{\times}\rightarrow\FF_5^{\times}$, factoring through the identity map on $(\Z/5\Z)^{\times}$. Similarly, $\psi$ may be viewed as a homomorphism $\psi: \Q^{\times}\backslash\A_{\Q}^{\times}\rightarrow\FF_5^{\times}$, factoring through some $(\Z/2^b7^c\Z)^{\times}$.
Since $\#(\FF_5^{\times})=4$, $b\leq 4$ and $c\leq 1$. Moreover, $\psi$ must be a product $\psi=\psi_1^d\psi_2^e\psi_3^f$, where $\psi_1:(\Z/16\Z)^{\times}\rightarrow\FF_5^{\times}$ is defined by $\psi_1(3)=2$, $\psi(-1)=1$, $\psi_2:(\Z/4\Z)^{\times}\rightarrow\FF_5^{\times}$ by $\psi_2(-1)=-1$, and $\psi_3:(\Z/7\Z)^{\times}\rightarrow\FF_5^{\times}$ is the Legendre symbol. Here $0\leq d\leq 3$, $0\leq e\leq 1$ and $0\leq f\leq 1$.

 The polynomial $\det(I-\rho_5(\Frob_p)T)$ is always found to factor as $(1-\alpha(pT)+p^3T^2)(1-\beta T+p^3T^2)$ (with $\alpha$ and $\beta$ depending on $p$), for those $p$ for which it is computed in \cite{COES}. Inspecting coefficients of $T$ and $T^2$, we find that
$$Y_p+Z_p=\tr(\rhobar_5(\Frob_p^{-1}))\equiv\alpha p+\beta\pmod{5}$$
and $$Y_pZ_p\equiv p\alpha\beta\pmod{5}.$$
By considering the roots of a quadratic equation, this determines the unordered set $\{Y_p, Z_p\}=\{\alpha p, \beta\}$. Comparing with $Y_p=\psi(p^{-1})+p^3\psi(p)$ and $Y_p=p\psi(p^{-1})+p^2\psi(p)$ (without loss of generality we may suppose that $a=0$ or $-1$), for each $p$ we can hope to reject some of the possibilities for $(a,\psi)$. Using several $p$, we might eliminate all but the $(0,\mathrm{id})$ we want.

All the entries in the table below are in $\FF_5$, except for $p$. Those for $\alpha p$ and $\beta$ are derived from \cite[Table 2]{COES}, so depend upon the validity of their method of computation, i.e. the conjecture in \cite[\S 4.4]{COS}. In the next section we shall remove our dependence on this unproved conjecture.
\vskip10pt
\begin{tabular}{|c|cc|ccc|ccc|}
\hline $p$ & $\alpha p$ & $\beta$ & $p$ & $p^2$ & $p^3$ & $\psi_1(p)$ & $\psi_2(p)$ & $\psi_3(p)$ \\\hline$31$ & $1$ & $2$ & $1$ & $1$ & $1$ & $1$ & $-1$ & $-1$ \\$113$ &$3$&$3$&$3$&$4$&$2$&$1$&$1$&$1$\\$29$&$1$&$0$&$4$&$1$&$4$&$2$&$1$&$1$\\$13$&$3$&$3$&$3$&$4$&$2$&$2$&$1$&$-1$\\$17$&$2$&$4$&$2$&$4$&$3$&$1$&$1$&$-1$\\\hline
\end{tabular}
\vskip10pt
For $p=31$, $Y_p=2(-1)^{e+f}$. This must be either $\alpha p=1$ or $\beta=2$, necessarily the latter, with $e+f$ even, i.e. $e=f$.

For $p=113$, either $Y_p=3$ (if $a=0$) or $Y_p=2$ (if $a=-1$). but $\alpha p=\beta=3$, so we deduce that $a=0$.

For $p=29$, $Y_p=2^{-d}-2^d$, which is $\alpha p=1\iff d=1$, $\beta=0\iff d=0$, so we eliminate the possibilities $d=2$ and $d=3$. 

If $p=13$ then $Y_p=2^{-d}(-1)^f+2(2^d(-1)^f)$. To make this equal to $3$, checking the cases $d=0$ and $d=1$ we find that $f=d$, so $d=e=f$. 

Finally, if $p=17$ then $Y_p=(-1)^f(1+3)$, which must be $\beta=4$, with $f=0$.
Hence $a=d=e=f=0$, as required.
\end{proof}
\begin{prop}\label{Zp} 
\begin{enumerate}
\item The $2$-dimensional representation $\rho'$ is irreducible.
\item Its trace at $\Frob_p^{-1}$, $Z_p=pb_p$, where $g=\sum b_nq^n$ is a normalised cuspidal newform for $\Gamma_0(14)$ as in Conjecture \ref{Conj1}.
\end{enumerate}
\end{prop}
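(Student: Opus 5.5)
We first pin down the basic invariants of $\rho'$ and then play off the two possibilities, reducible or irreducible, against each other. Since $\rhobar_5$ takes values in $\GSp_2(\FF_5)$ with similitude character $\epsilon^{-3}$, and (by Proposition \ref{Yp}) $\chi\in\{\mathrm{id},\epsilon^{-3}\}$, the Levi description $t=\det A$ gives $\det\rho'=\epsilon^{-3}$. Replacing $\chi$ by its partner if necessary, we may take $\chi=\mathrm{id}$. Consider the twist $\rho'(1)$. It has determinant $\epsilon^{-1}$, so it is odd. It is unramified outside $\{2,5,7\}$. By Fontaine--Laffaille, applied to the subquotient of the crystalline $\rho_5$ with Hodge--Tate weights $\{0,1,2,3\}$, once $\{0,3\}$ has been used up by $\mathrm{id}$ and $\epsilon^{-3}$, the representation $\rho'(1)$ is finite flat at $5$ with weights $\{0,1\}$.

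\emph{Conductor bound.} We bound the conductor of $\rho'$ at $2$ and $7$. The semistable (or at least tame) behaviour of the Hulek--Verrill threefold at these primes should give conductor exponent at most $1$ at $7$. At $2$, Livn\'e's bound for $2$-adic/mod-$5$ representations gives exponent at most $2\cdot 2+\dots$ in general. We would first try to show, using the geometry at $2$ (reduction of $-\tfrac{1}{7}$ meets $1,\tfrac19,\tfrac1{25}$ modulo powers of $2$), that the inertia at $2$ acts through a unipotent-type or small image, giving exponent $\le 1$. That would yield level dividing $14$.

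\emph{Irreducibility.} Suppose $\rho'$ were reducible. Its composition factors would be characters $\epsilon^{a}\psi'$ with $a\in\{-1,-2\}$ (the remaining Hodge--Tate weights $1,2$) and $\psi'$ of conductor dividing $2^4\cdot 7$, with product $\epsilon^{-3}$. The same enumeration as in the proof of Proposition \ref{Yp}, namely $\psi'=\psi_1^d\psi_2^e\psi_3^f$, forces $Z_p=p\psi'(p^{-1})+p^2\psi'(p)$. We compare this with the values $\{\alpha p,\beta\}\setminus\{Y_p\}$ read off from the table for $p=31,113,29,13,17$ (and a few more primes if needed). We expect every choice $(d,e,f)$ to be excluded. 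For instance, at $p=31$ we need $Z_{31}=\alpha p=1$, whereas $p\psi'(p^{-1})+p^2\psi'(p)=2(\pm1)$, which is never $1$. That single prime already kills all cases, giving (1).

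\emph{Identification.} With $\rho'(1)$ irreducible and odd, Serre's conjecture (Khare--Wintenberger) gives a weight-$2$ newform $h$ of level $N\mid 14$ (or the level from the conductor bound) with $\rho'(1)\simeq\rhobar_{h,5}$, with trivial character since $\det=\epsilon^{-1}$. The weight is $2$ by the Fontaine--Laffaille/finite-flat condition. Since $S_2(\Gamma_0(N))$ for $N\mid 14$ is spanned by the single form $g=\mathbf{14.2.a.a}$, we get $h=g$. Hence $\tr\rho'(\Frob_p^{-1})=p\,\tr\rho'(1)(\Frob_p^{-1})\cdots$; after fixing conventions this equals $pb_p$. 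As a sanity check, at $p=31$ we have $b_{31}\equiv$ a value matching $\alpha p\cdot 31^{-1}$.

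\emph{Main obstacle.} The main obstacle is the conductor bound at $2$. If we can only get exponent up to $2^8$, we instead enumerate the newforms of weight $2$, trivial character and level $2^b\cdot 7$, and eliminate them mod $5$ by comparing $b_p$ against $\beta$ or $\alpha p$ at the tabulated primes, which is a finite computation.
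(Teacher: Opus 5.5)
Your part (1) is correct, and it is the paper's argument with a different test prime. At $p=31$ one has $Y_{31}=2=\beta$, hence $Z_{31}=\alpha p=1$, while a reducible $\rho'$ would force $Z_{31}=2\psi'(31)\in\{2,3\}$. The paper uses $p=113$ instead, where $\psi'(113)=1$ gives $Z_{113}=2\neq 3$. Either prime works, and both are among the primes certified unconditionally in Section 5.

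The genuine gap is in (2), in the level of $h$. Your main line needs $N\mid 14$, and nothing you write establishes it.
\begin{itemize}
\item At $7$, tameness is automatic for a mod $5$ representation, since $7\nmid \#\GL_2(\FF_5)=480$. But it only gives conductor exponent $\le 2$, not $\le 1$: inertia could act through a non-trivial semisimple element with no fixed vector.
\item Semistability of the fibre at $7$ is not proved.
\item At $2$, the parameter $-\frac{1}{7}$ is $2$-adically close to all three singular points $1,\frac19,\frac1{25}$, which collide modulo $8$. So there is no cheap geometric control of inertia there.
\end{itemize}

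The paper never bounds the conductor geometrically. It uses only Serre's a priori bounds $N=2^a7^b$ with $a\le 8$, $b\le 2$. It then does one computation in the cuspidal $\FF_5$-modular symbols of level $2^8 7^2=12544$. The common kernel of $T_{11}$, $T_{13}-1$, $T_{17}-1$, $T_{19}-2$ (eigenvalues certified in Section 5) is exactly $32$-dimensional. Hence it equals the $g$-old space $\mathbb{S}_g$, so $\mathbb{S}_h\subseteq\mathbb{S}_g$ and $h=g$.

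Your fallback is in the same spirit, but as stated it has three shortcomings:
\begin{itemize}
\item It lets only the $2$-part of the level grow and omits levels divisible by $7^2$.
\item The claim that every newform other than $g$ is eliminated by the tabulated primes is precisely the computational fact that must be verified; it is not automatic.
\item For an unconditional proof, that check may use only the primes whose Euler factors are proved in Section 5, not the conditional data of \cite{COES}.
\end{itemize}
The paper's kernel computation at the maximal level settles all of this in one step. It also avoids enumerating newforms level by level with possibly large coefficient fields and checking every prime above $5$ in each.
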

\begin{proof}
\begin{enumerate}
\item If $\rho'$ is reducible then, given that the Hodge-Tate weights $0$ and $3$ are already accounted for (in Proposition \ref{Yp}), its composition factors must be of the form $\{\epsilon^{-1}\psi, \epsilon^{-2}\psi^{-1}\}$, with $\psi=\psi_1^d\psi_2^e\psi_3^f$ as above. So $Z_p=p\psi(p^{-1})+p^2\psi(p)$. 
But for $p=113$ this would be $Z_p=2$, which does not match either of $\alpha p=\beta=3$.
\item We now know that $\rho'$ is irreducible. We actually look at the Tate twist $\rho'(1)$, which has Hodge-Tate weights $0$ and $1$. It follows from the theorem of Khare-Wintenberger and Kisin \cite{KW1, KW2, Kis} (formerly Serre's Conjecture) that $\rho'(1)$ is the mod $5$ representation of $G_{\Q}$ attached to a cuspidal weight-$2$ newform $h=\sum c_nq^n$ for $\Gamma_0(N)$, for some $N$. Our goal is to show that $N=14$ (so $h=g$ as in Conjecture \ref{Conj1}).
By Lemma \ref{goodred}, we know that $N$ (the Artin conductor of $\rho'(1)$) is divisible at most by primes $2$ and $7$. Moreover, if $N=2^a7^b$ (resetting some notation), we have $a\leq 8$ and $b\leq 2$, by \cite[(4.8.8), end of \S 4.9]{Se}.

Given a space of weight-$2$ cuspforms $S_2(\Gamma_0(M))$, there is an associated space of cuspidal modular symbols $\mathbb{S}'_2(\Gamma_0(M))$, a certain free $\Z$-module of rank $2\dim(S_2(\Gamma_0(M)))$. We shall actually consider the $\FF_5$-vector space $\mathbb{S}_2(\Gamma_0(M))$ of cuspidal modular symbols with coefficients in $\FF_5$. 

Inside $\mathbb{S}_2(\Gamma_0(2^87^2))$, there is a $2\sigma_0(2^87^2/N)=2(8-a+1)(2-b+1)$-dimensional subspace $\mathbb{S}_h$ coming from the newform $h$, including ``old'' modular symbols depending on divisors of $2^87^2/N$. For each prime $p\notin\{2,5,7\}$, within the scope of \cite[Table 2]{COES}, it is always the case that $\beta\equiv Y_p=1+p^3\pmod{5}$, so necessarily $\alpha p\equiv Z_p\equiv pc_p\pmod{5}$. It follows that $\mathbb{S}_h$ is inside the intersection of the kernels of the Hecke operators $T_p-\alpha$ for these $p$. Within the scope of \cite[Table 2]{COES} always $\alpha=b_p$, so equally the $2(8-1+1)(2-1+1)=32$-dimensional space $\mathbb{S}_g$ associated with the newform $g=\sum b_nq^n\in S_2(\Gamma_0(14))$ lies inside this intersection of kernels.
(For this, again we require the validity of the conjecture in \cite[\S 4.4]{COS}.)
The first few operators, written mod $5$, include $T_{11}$, $T_{13}-1$, $T_{17}-1$, $T_{19}-2$. We can find the intersection of their kernels by running the following code in Magma.
\begin{verbatim}
M:=ModularSymbols(12544, 2, GF(5));
C:=CuspidalSubspace(M);
P<x>:=PolynomialRing(GF(5));
I:=[<11,x>, <13, x-1>, <17, x-1>, <19, x-2>];
K:=Kernel(I, C);
K;
\end{verbatim}
The result (within less than a minute) is a $32$-dimensional space, which must be $\mathbb{S}_g$. Hence $\mathbb{S}_h\subseteq\mathbb{S}_g$. It could be just that $g$ and $h$ share the same mod $5$ residual Galois representation $\rho'(1)$, with their associated spaces of integral modular symbols overlapping when they are reduced mod $5$. But since $h$ is of minimal level for that residual representation, in fact $h=g$, as required.
\end{enumerate}
\end{proof}

\begin{thm}\label{chars} Let $F$ be any number field, and $\phi\in F$ a parameter value not in $\Sigma:=\{0,1,\infty, \frac{1}{9}, \frac{1}{25}\}$. Let $\rhobar_5^{\phi}$ be the
representation of $G_F$ on the reduction of $H^3_{\text{\'et}}(X_{\phi,\Qbar}, \Z_{5})$. Then $\mathrm{id}$ and $\epsilon^{-3}$ are among the composition factors of $\rhobar_5^{\phi}$. 
\end{thm}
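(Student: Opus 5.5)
The plan is to run the argument of Proposition \ref{GaloisMod5} uniformly in the family, and then pin down the resulting character by a specialisation argument that does not depend on $\phi$. Take $\phi\in F-\Sigma$ as basepoint. The normaliser argument applies verbatim: the section $G_F\to\pi_1^{\text{\'et}}(\PP^1_F-\Sigma,\phi)$ given by the rational point $\phi$ has image normalising the geometric fundamental group. So $\rhobar_5^{\phi}(G_F)$ normalises the monodromy image $J$. By Proposition \ref{normaliser} it then lies in $P$, with respect to the $5$-integral basis of Proposition \ref{Basis}, transported to $\phi$.

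It follows that $\rhobar_5^{\phi}$ has a $G_F$-stable line, on which it acts by a character $\chi_\phi$. By Poincar\'e duality (the symplectic form has multiplier $\epsilon^{-3}$), $\epsilon^{-3}\chi_\phi^{-1}$ is also a composition factor. It therefore suffices to show $\chi_\phi\in\{\mathrm{id},\epsilon^{-3}\}$.

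To identify $\chi_\phi$ I would avoid the prime-by-prime computation of Proposition \ref{Yp} and work with the whole arithmetic fundamental group. The line is stable under $J$, which contains $U$ and $K$, so the $U$-fixed line is canonical (it is $\ker(N)$ mod $5$, spanned by $\tau$). Hence the whole group $\pi_1^{\text{\'et}}(\PP^1_{\Q}-\Sigma)$ acts on the reduction of the local system through $P$. Projecting to the first $\GL_1$ factor of the Levi gives a character
$$\chi:\pi_1^{\text{\'et}}(\PP^1_{\Q}-\Sigma,\,\cdot)\to\FF_5^{\times}.$$
By Proposition \ref{mod5image}, $\chi$ is trivial on the geometric part, since $J\cap L=K$ and the entry $s=1$ on $J$. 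So $\chi$ factors through $G_{\Q}$ and is independent of the basepoint, and for every $\phi$ we get $\chi_\phi=\chi|_{G_F}$. Specialising at $\phi=-\frac{1}{7}$, Proposition \ref{Yp} shows that $\chi$ (or its partner, after possibly swapping the two ends of the flag) is $\mathrm{id}$ or $\epsilon^{-3}$.

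To decide which end carries which character, note that the complementary quotient character is $\epsilon^{-3}\chi^{-1}$. At $\phi=-\frac{1}{7}$ the pair $\{\chi,\epsilon^{-3}\chi^{-1}\}$ is $\{\mathrm{id},\epsilon^{-3}\}$, so either way both characters occur for every $\phi$.

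The main obstacle is making rigorous the claim that the $\GL_1$-component of the $P$-valued action of the arithmetic fundamental group is a well-defined character that is constant in $\phi$. This needs the local system over $\PP^1_{\Q}-\Sigma$ to have a canonical rank-one sub-local-system mod $5$, rather than only a pointwise flag. I would establish it through uniqueness: the $J$-stable line is unique, because $U$ fixes exactly one line. Being unique, this line is preserved by the whole arithmetic $\pi_1$, which yields a rank-one \'etale sub-local-system over $\PP^1_{\Q}-\Sigma$. Specialisation along $\Q$-points and base change to $F$ are then formal.

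An alternative, if I wanted more explicit reassurance about the character, would be to compute it from the local system at the MUM point $0$. There the invariant cycle $\tau$ has period $\Phi_0$, and the torus $\tau$ is defined over $\Q$, which suggests the action on the corresponding graded piece is the one fixed by the weight filtration. I would keep this only as a cross-check.
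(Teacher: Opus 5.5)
Your proposal is correct and follows the paper's argument. The normaliser argument puts the image of $G_F$ in $P$. The $\GL_1$-projection is trivial on the geometric monodromy $J$ (the $1$'s in the corners in Proposition \ref{mod5image}). Hence the character is independent of $\phi$ and can be read off at $\phi=-\frac{1}{7}$ from Proposition \ref{Yp}. Packaging this as a rank-one sub-local-system over $\PP^1_{\Q}-\Sigma$ with trivial geometric monodromy, restricted to $G_F$, only makes explicit the paper's remark that sections at different basepoints differ by elements of the geometric fundamental group.
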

\begin{proof}  Fixing a number field $F$, for any $\phi\in F-\Sigma$ there is a section $G_{F}\hookrightarrow\pi_1^{\text{\'et}}(\PP^1_{F}-\Sigma, \phi)$, whose image in $\GSp_2(\FF_5)$ lies in the maximal parabolic subgroup $P$ as in Proposition \ref{GaloisMod5}. Different choices of $\phi$ (for the same $F$) give sections differing only up to elements of the geometric monodromy group $\pi_1^{\text{\'et}}(\PP^1_{\Qbar}-\Sigma, \phi)$. (For different values $\phi_1, \phi_2$ we may choose a path from $\phi_1$ to $\phi_2$, which serves both to fix an isomorphism of geometric monodromy groups and to identify the fibres $U_{\phi_1}$ and $U_{\phi_2}$ on which they act.) Projecting the geometric monodromy action to the factor $\GL_1(\FF_5)$ of the Levi subgroup of $P$ gives the trivial character, as we can see from the $1$'s in the corners in Proposition \ref{mod5image}. It follows that the character $\chi$ is independent of $\phi$, hence $\{\chi, \epsilon^{-3}\chi^{-1}\}$ is $\{\mathrm{id}, \epsilon^{-3}\}$ for any $\phi\in F$, as it is for $\phi=-\frac{1}{7}$ by Proposition \ref{Yp}.
\end{proof}

\section{Counting points on the Hulek-Verrill manifold}
As already noted, the purpose of this section is to remove our dependence on the unproved conjecture in \cite[\S 4.4]{COS}. Recall that $X_{\phi}$ is a quotient, by a cyclic group of automorphisms of order $10$, of $\overline{X}_{\mathbf{a}}$, where $\mathbf{a}=(1:1:1:1:1:\phi^{-1})$.
\begin{lem}\label{H2}
For any prime $p$ of good reduction, the geometric Frobenius element $\Frob_p^{-1}\in G_{\Q}$ acts on $H^2_{\text{\'et}}(\overline{X}_{\mathbf{a},\Qbar}, \Q_{\ell})$ (for any prime $\ell$) as multiplication by $p$.
\end{lem}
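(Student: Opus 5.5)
The plan is to show that $H^2$ is spanned by algebraic cycle classes (divisors) defined over $\Q$. The key steps, in order, are as follows.

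\textbf{Step 1: compare dimensions.} For generic $\mathbf{a}$ we have $h^{1,1}(\overline{X}_{\mathbf{a}})=45$ and $h^{2,0}=0$. Hence
$$\dim H^2_{\text{\'et}}(\overline{X}_{\mathbf{a},\Qbar},\Q_\ell)=45.$$
Since the Picard group of a Calabi--Yau threefold surjects onto $H^{1,1}\cap H^2(\ZZ)$ by Lefschetz $(1,1)$, it suffices to exhibit $45$ divisor classes spanning $\mathrm{NS}(\overline{X}_{\mathbf{a},\Qbar})\otimes\Q_\ell$ whose cycle classes are individually fixed by $G_{\Q}$ up to the twist. Then $H^2\simeq \Q_\ell(-1)^{45}$, and $\Frob_p^{-1}$ acts as $p$.

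\textbf{Step 2: find explicit divisors.} Following Hulek--Verrill's construction, take the following candidates:
\begin{itemize}
\item the toric boundary divisors of the ambient $4$-dimensional toric variety restricted to $X_{\mathbf{a}}$, together with their components, e.g.\ the loci $X_i=X_j=0$ and coordinate-hyperplane sections, all cut out by equations with integer coefficients;
\item the exceptional surfaces introduced in blowing up the $30$ nodes.
\end{itemize}
The nodes lie outside the torus at points with coordinates in $\{0,\pm1,\dots\}$, essentially on loci like $X_i=X_j=0$, $X_k+X_l+X_m=0$. Because of this, each node and each surface blown up in the resolution is defined over $\Q$, so each resulting divisor is defined over $\Q$, not merely over a finite extension. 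Hulek and Verrill \cite{HV} compute $h^{1,1}=45$ precisely by counting such components, so I expect to cite their description (their Lemma on the Picard group, or the count leading to $h^{1,1}=45$) rather than redo it.

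\textbf{Step 3: Galois action on the cycle classes.} The cycle class of a divisor defined over $\Q$ lies in $H^2(\overline{X}_{\mathbf{a},\Qbar},\Q_\ell(1))^{G_\Q}$. Therefore $G_{\Q}$ acts on the $\Q_\ell$-span of these classes through the inverse cyclotomic twist, and $\Frob_p^{-1}$ acts as $p$ at good primes $p\neq\ell$. For $p=\ell$, use independence of $\ell$ (or simply choose a different $\ell$, since the characteristic polynomial of Frobenius is independent of $\ell$ by Deligne). Smooth proper base change makes this valid after reduction mod $p$ for good $p$.

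\textbf{Main obstacle.} The hard part is verifying that the $45$ rational divisor classes actually span $H^2$, i.e.\ that there are no hidden classes defined only over an extension, on which Frobenius could act by $p$ times a nontrivial root of unity. First I will try to extract this from \cite{HV}: they realise $h^{1,1}=45$ via explicit components, all visibly rational given that $\mathbf{a}$ has all $a_i=1$ for $i\le5$. Failing that, I will use an alternative route. Counting $\#\overline{X}_{\mathbf{a}}(\FF_p)$ via the Hulek--Verrill point-count formula and comparing with the Lefschetz trace formula gives
$$\tr(\Frob_p^{-1}\mid H^2)=45p,$$
and all eigenvalues have absolute value $p$. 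Since $\tr(\Frob_p^{-1}\mid H^2)$ is a sum of $45$ eigenvalues each of absolute value $p$, equality $45p$ forces every eigenvalue to equal $p$. However, that comparison requires knowing the $H^3$ contribution separately, so the divisor argument is preferable.
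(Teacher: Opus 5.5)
You have the right overall strategy: show that $H^2$ is spanned by classes of divisors defined over $\Q$. This is the strategy behind the paper's proof, which borrows the argument of \cite[Proposition 6.1]{HV}. However, the step that carries all the content is missing, and the reason you give for it is not valid.

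In Step 2 you infer that every relevant divisor is defined over $\Q$ because the nodes have rational coordinates. That inference fails, for two reasons.
\begin{enumerate}
\item The resolution $\overline{X}_{\mathbf a}\to X_{\mathbf a}$ is small: each node is replaced by a $\PP^1$. So $\overline{X}_{\mathbf a}$ contains no exceptional surfaces, and your second family of candidate divisors does not exist there.
\item The extra classes in $H^2$ come instead from Weil divisors through the nodes that are not Cartier there. Their local classes correspond to the two families of planes in the quadric cone at each node, equivalently to the two rulings of the exceptional quadric $Q$ of the auxiliary blowup of the node. Rationality of a node says nothing about rationality of these rulings. For example, for $x^2-dy^2=zw$ with $d$ a non-square, the rulings are conjugate over $\Q(\sqrt d)$ and are interchanged by Frobenius at primes inert there.
\end{enumerate}
What has to be checked, therefore, is that at each of the $30$ nodes the rulings of $Q$ are defined over $\Q$, hence over $\FF_p$. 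The paper does exactly this. By the permutation symmetry it treats a single node, $(-1,0,0,-1)$ in the chart $x=X_1/X_2$, $y=X_2/X_3$, $z=X_3/X_4$, $w=X_4/X_5$. There it finds
$$Q:\ -ac+ad+8bc-bd=a(d-c)+b(8c-d)=0,$$
which contains rational lines such as $a=b=0$ and $c=d=0$, so its rulings are rational. Your ``main obstacle'' paragraph hopes the classes are ``visibly rational'' because $a_i=1$. It never isolates where rationality could fail, so the crux is left unproved.

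Your fallback via point counts does not close the gap. The Lefschetz trace formula only gives the alternating sum of traces. Extracting $\tr(\Frob_p^{-1}\mid H^2)=45p$ from $\#\overline{X}_{\mathbf a}(\FF_p)$ therefore needs the $H^3$ trace independently. The paper uses this lemma in the opposite direction: it recovers the $H^3$ traces from point counts in Proposition~\ref{count}, so the fallback is circular. Moreover, any such numerical check covers only finitely many primes, whereas the lemma is asserted for every good prime.
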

\begin{proof} Let $P$ be a singular point on $X_{\mathbf{a}}$, for $\phi=-\frac{1}{7}$ necessarily outside the torus $T: X_1X_2X_3X_4X_5\neq 0$. The blowup of $P$ in $X_{\mathbf{a}}$ is isomorphic to a quadric surface $Q$ with two rulings. As in the proof of \cite[Proposition 6.1]{HV}, it suffices to show that (for each $P$) the rulings of $Q$ are defined over $\FF_p$. We look at just one of the $30$ nodes, since the others can be obtained from it by permutations of the variables.

Letting $x=X_1/X_2, y=X_2/X_3, z=X_3/X_4$ and $w=X_4/X_5$, as in \cite[(11)]{HV}, an affine piece of $X_{\mathbf{a}}$ is defined by the equation
$$(1+x+xy+xyz+xyzw)(1+w+wz+wzy+wzyx)=(-7)xyzw,$$
with node $P=(-1,0,0,-1)$. Using the Magma code below (working over $\Q$), we find that the blowup of $P$ in $X_{\mathbf{a}}$ is isomorphic to the quadric surface $Q$ in $\mathbb{P}^3$ with homogeneous equation 
$$-ac+ad+8bc-bd=0.$$
The rulings are defined over $\Q$ (and by reduction, over $\FF_p$), containing lines
$a=b=0$ and $c=d=0$.

\begin{verbatim}
k:=Rationals();
A<x,y,z,w>:=AffineSpace(k,4);
V:=Scheme(A,(1+x+x*y+x*y*z+x*y*z*w)*(1+w+w*z+w*z*y+w*z*y*x)+7*x*y*z*w);
p:=A ! [-1,0,0,-1];
X,mp:=Blowup(V,p);
Y:=p @@ mp;
MinimalBasis(Ideal(Y));
\end{verbatim}
\end{proof}

It follows, by Poincar\'e duality, that $\Frob_p^{-1}$ acts on $H^4_{\text{\'et}}(\overline{X}_{\mathbf{a},\Qbar}, \Q_{\ell})$ as multiplication by $p^2$.
In the proofs of Propositions \ref{Yp} and \ref{Zp}, we used the fact that 
$$\det(I-\rho_{\ell}(\Frob_p^{-1})p^{-s})=(1-a_pp^{-s}+p^{3-2s})(1-pb_pp^{-s}+p^{3-2s}),$$
where $\rho_{\ell}$ is the representation of $G_{\Q}$ on $H^3_{\text{\'et}}(X_{\phi,\Qbar}, \Q_{\ell})$ (in particular for $\ell=5$) and $f=\sum a_nq^n$, $g=\sum b_nq^n$ are as in Conjecture \ref{Conj1}, at least for $p=11, 13, 17, 19, 29, 31, 113$. 
\begin{prop} \label{count} For $\mathbf{a}=(1:1:1:1:1:\phi^{-1})=(1:1:1:1:1:-7)$, and any prime $\ell$, we have
$$\det(I-\rho_{\ell}(\Frob_p^{-1}|H^3_{\text{\'et}}(\overline{X}_{\mathbf{a},\Qbar}, \Q_{\ell}))p^{-s})=(1-a_pp^{-s}+p^{3-2s})(1-pb_pp^{-s}+p^{1-2s})^5,$$ for $p=3,11,13,17,19,29,31$ and $113$.
\end{prop}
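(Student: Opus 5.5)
The plan is to compute the characteristic polynomial of Frobenius on $H^3$ from point counts, via the Lefschetz trace formula. For each listed prime $p$ (all of good reduction, since by the proof of Lemma \ref{goodred} the bad primes are only $2$ and $7$), we count $\#\overline{X}_{\mathbf{a}}(\FF_{p^r})$ for $r=1,2$. Lemma \ref{H2} and Poincar\'e duality give the contributions of $H^0,H^2,H^4,H^6$: they are $1$, $h^{1,1}p^r=45p^r$, $45p^{2r}$ and $p^{3r}$, with $H^1=H^5=0$. Hence
$$\tr(\Frob_p^{-r}\mid H^3)=1+45p^r+45p^{2r}+p^{3r}-\#\overline{X}_{\mathbf{a}}(\FF_{p^r}).$$
For the point count I will use the Hulek--Verrill formula for $\#\overline{X}_{\mathbf{a}}(\FF_q)$. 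It counts points of the torus part $X_{\mathbf{a}}\cap T$, which is given by equation (\ref{XT}) in $(\FF_q^{\times})^4$. To this it adds the boundary strata, which are explicit lower-dimensional toric pieces with polynomial counts, and corrects for the $30$ nodes, each replaced by $\PP^1$ (or by the exceptional quadric counted appropriately; per node this changes the count by $q$, since the rulings are rational by Lemma \ref{H2}). The torus count itself is computed by machine, for $q=p$ and $q=p^2$.

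Next we use the rank-$12$ structure of $H^3$. The characteristic polynomial has degree $2(1+h^{1,2})=12$, so its coefficients are not determined by two traces alone. Here we invoke Hulek--Verrill's geometric result quoted in the introduction: the ruled elliptic surfaces give a $G_{\Q}$-subrepresentation of $H^3$ isomorphic to $4\rho_{g,\ell}(-1)$ (its semisimplification suffices), contributing the factor $(1-pb_pp^{-s}+p^{1-2s})^4$. The complementary piece $W$ is then $4$-dimensional. It is pure of weight $3$, it satisfies the functional equation from Poincar\'e duality, and its determinant is $p^6$. Consequently its characteristic polynomial has the shape $1-t_1T+uT^2-p^3t_1T^3+p^6T^4$, which depends only on $t_1=\tr(\Frob_p^{-1}\mid W)$ and $t_2=\tr(\Frob_p^{-2}\mid W)$, through Newton's identity $u=(t_1^2-t_2)/2$. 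Both traces come from the counts over $\FF_p$ and $\FF_{p^2}$, after subtracting $4pb_p$ and $4(p^2b_p^2-2p^3)$, respectively. It then remains to check that the result equals $(1-a_pT+p^3T^2)(1-pb_pT+p^3T^2)$, using the known LMFDB values of $a_p,b_p$.

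An alternative avoids counting over $\FF_{p^2}$. Weil bounds $|t_1|\le 4p^{3/2}$, integrality of the coefficients, and the middle-coefficient bound restrict $u$ to a finite range. One could then combine this with the congruence information from Section 4 or with $5$-adic crystalline constraints. The $\FF_{p^2}$ count is more robust, so I will use it; for $p=113$ this means enumerating about $113^8$ torus points naively, which is too many. I would therefore reduce the torus count by fibring over $(x_1,x_2,x_3)$ and solving the quadratic (\ref{XTquad}) in $x_4$. The number of solutions is determined by whether the discriminant is a square, which brings the work down to $O(q^3)$ character evaluations. The symmetry under the order-$10$ group $G$ and under permutations cuts this further.

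I expect the main obstacle to be the correctness and bookkeeping of the boundary and node contributions in the point-count formula, rather than the arithmetic. A mistake there shifts $t_1$ by a multiple of $p$ and would be invisible in a single check. I would therefore first validate the whole pipeline on a rigid or already-known case: for $\mathbf{a}=(1:1:1:1:1:1)$, or for a $\phi$ covered by \cite[Theorem 6.11]{HV}, the answer is known, and the computed polynomial should reproduce it. Only after that check would I apply the pipeline to $\phi=-\frac17$ at the eight listed primes.
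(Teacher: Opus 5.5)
Your proposal is correct and is essentially the paper's own proof. The paper likewise takes the four copies of $\rho_{g,\ell}(-1)$ coming from the ruled surfaces $\mathcal{E}_{1,1,1,-7}\times\PP^1$, after identifying $\mathcal{E}_{1,1,1,-7}$ explicitly with $\mathbf{14.a4}$; the remaining degree-$4$ Poincar\'e-dual factor is then pinned down by $\tr(\Frob_p^{-1})$ and $\tr(\Frob_p^{-2})$, which come from the Lefschetz formula with $h^{1,1}=45$ and the Hulek--Verrill discriminant sum over $\FF_p$ and $\FF_{p^2}$, using a symmetry reduction. The only thing to note is that the factor for $\rho_{g,\ell}(-1)$ should read $1-pb_pp^{-s}+p^{3-2s}$: the $p^{1-2s}$ in the statement, which you copied, is a typo, and your own subtraction $4(p^2b_p^2-2p^3)$ already uses the correct $p^{3-2s}$.
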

\begin{proof} Following \cite[\S 5]{HV}, inside the torus $T: X_1X_2X_3X_4X_5\neq 0$ we may intersect $X_{\mathbf{a}}$ with the hyperplane $X_4+X_5=0$ to get the equation 
$$(X_1+X_2+X_3)\left(\frac{1}{X_1}+\frac{1}{X_2}+\frac{1}{X_3}\right)=\phi^{-1}=-7.$$
In the notation of \cite[\S 4]{HV}, this is the equation of an elliptic curve $\mathcal{E}_{1,1,1,-7}$.  By \cite[(21)]{HV}, this is isomorphic to an elliptic curve with Weierstrass equation $y^2=x(x^2+22x-7)$, which has minimal model
$y^2+xy+y=x^3-11x+12$, using Magma \cite{BCP}.  This is the curve labelled $\mathbf{14.a4}$ in the LMFDB database \cite{LMF}, whose $L$-series is $L(g,s)$.  Remembering the projective line given by $X_4+X_5=0$, this puts an $E_{45}^{\mathbf{a}}\simeq\mathcal{E}_{1,1,1,-7}\times\mathbb{P}^1$ inside $X_{\mathbf{a}}$ \cite[Remark 5.2]{HV}. Permuting variables, we get $E_{ij}^{\mathbf{a}}$ for any pair $1\leq i\neq j\leq 5$.  Slightly modifying the notation of \cite{HV}, these produce a certain subspace $W_{\mathbf{a}}$ of $H^3_{\text{\'et}}(\overline{X}_{\mathbf{a},\Qbar}, \Q_{\ell})$. Using \cite[Corollary 5.11(i)]{HV}, the quotient $\frac{W_{\mathbf{a}}}{(W_{\mathbf{a}}\cap W_{\mathbf{a}}^{\perp})}$ is $8$-dimensional, necessarily four copies of $H^1_{\text{\'et}}(\mathcal{E}_{1,1,1,-7,\Qbar}, \Q_{\ell})\times H^2_{\text{\'et}}(\mathbb{P}^1_{\Qbar}, \Q_{\ell})$. 

Hence
$$\det(I-\rho_{\ell}(\Frob_p^{-1}|H^3_{\text{\'et}}(\overline{X}_{\mathbf{a},\Qbar}, \Q_{\ell}))p^{-s})=F(p^{-s})(1-pb_pp^{-s}+p^{3-2s})^4,$$ where $F(X)$ is a polynomial of degree $4$, in fact by Poincar\'e duality
$$F(X)=(1-\gamma X)(1-(p^3/\gamma)X)(1-\delta X)(1-(p^3/\delta)X)$$ only depends on two ``unknowns'' $\gamma$ and $\delta$. To prove that $F(X)=(1-a_pX+p^3X^2)(1-pb_pX+p^3X^2)$, for a given prime $p$ of good reduction, it therefore suffices to check that 
\begin{equation}\label{trp}\tr(\Frob_p^{-1}|H^3_{\text{\'et}}(\overline{X}_{\mathbf{a},\Qbar}, \Q_{\ell})=a_p+5pb_p\end{equation}
and \begin{equation}\label{trp2}\tr(\Frob_p^{-2}|H^3_{\text{\'et}}(\overline{X}_{\mathbf{a},\Qbar}, \Q_{\ell})=(a_p^2-2p^3)+5p^2(b_p^2-2p).\end{equation}
Rearranging the formula
$$\#\overline{X}_{\mathbf{a}}(\FF_p)=\sum_{i=0}^6 (-1)^i\tr(\Frob_p^{-1}|H^i_{\text{\'et}}(\overline{X}_{\mathbf{a},\Qbar}, \Q_{\ell}),$$ using Lemma \ref{H2} (with $h^{1,1}(\overline{X}_{\mathbf{a}})=45$ from \cite[\S 3.2.5]{HV}), we find that
$$\tr(\Frob_p^{-1}|H^3_{\text{\'et}}(\overline{X}_{\mathbf{a},\Qbar}, \Q_{\ell})=1+p^3+45(p+p^2)-\#\overline{X}_{\mathbf{a}}(\FF_p),$$
and similarly 
$$\tr(\Frob_p^{-2}|H^3_{\text{\'et}}(\overline{X}_{\mathbf{a},\Qbar}, \Q_{\ell})=1+p^6+45(p^2+p^4)-\#\overline{X}_{\mathbf{a}}(\FF_{p^2}).$$
Using \cite[Lemma 6.5]{HV},
$$\#\overline{X}_{\mathbf{a}}(\FF_p)=48p^2+46p+14$$ $$+\sum_{x,y,z\in \FF_p^{\times}}\left(\left(\frac{((1+x+y+z)\left(1+\frac{1}{x}+\frac{1}{y}+\frac{1}{z}\right)-1-(\phi^{-1}))^2-4(1)(\phi^{-1})}{p}\right)+1\right),$$
and one obtains a similar formula for $\#\overline{X}_{\mathbf{a}}(\FF_{p^2})$ by substituting $p^2$ for $p$, and replacing the Legendre symbol by the character of $\FF_{p^2}^{\times}$ that distinguishes squares. Note that the sum counts solutions to the quadratic equation (\ref{XTquad}).

Thus we can check (\ref{trp}) and (\ref{trp2}), for the primes in question, by computing both sides (with $\phi^{-1}=-7$). The intermediate results are in the table below.  Verifying (\ref{trp}) was almost instantaneous in each case, but for $p=113$ it took a little under three weeks to compute $\#\overline{X}_{\mathbf{a}}(\FF_{p^2})$. The Magma code for this is as follows. We have exploited the symmetry between $a,b$ and $c$ to cut the computation time by a factor of $6$, and have divided it into subsums to facilitate interrupting and resuming the computation. 
At the end $U=\#(\overline{X}_{\mathbf{a}}\cap T)(\FF_{113^2})$, $V=\#(\overline{X}_{\mathbf{a}})(\FF_{113^2})$, and the number on the last line is that to be compared with the number in the fifth column. It did come out to precisely $-13117460$, as hoped.
\begin{verbatim}
F<x>, mp:=ExtensionField<GF(113),x|x^2+x+1>;
increment:=function(a,b,c);
d:=((1+a+b+c)*((1/a)+(1/b)+(1/c)+1)+6)^2+28;
if IsSquare(d) then;
if d eq Zero(F) then;
if (a eq b) and (b eq c) then;
return 1;
else if (a eq b) or (a eq c) or (b eq c) then;
return 3;
else return 6;
end if ;
end if;
else
if (a eq b) and (b eq c) then;
return 2;
else if (a eq b) or (a eq c) or (b eq c) then;
return 6;
else return 12;
end if ;
end if;
end if;
else return 0;
end if;
end function;
U:=0;
for e in [0..112] do
S:=0;
for f in [0..112] do;
A:=e+f*x;
for g in [e..112] do;
for h in [0..112] do;
B:=g+h*x;
for m in [g..112] do;
for n in [0..112] do;
C:=m+n*x;
if (A ne Zero(F)) and (B ne Zero(F)) and (C ne Zero(F)) then;
if ((g ne e) or (f le h)) and ((g ne m) or (h le n)) then;
S:=S+increment(A,B,C);
end if;
end if;
end for;
end for;
end for;
end for;
end for;
U:=U+S;
e, S, U;
end for;
V:=U+48*p^4+46*p^2+14;
V;
(1+p^6+45*(p^4+p^2))-V;
\end{verbatim}
\end{proof}
\vskip10pt
\begin{tabular}{|c|c|c|c|c|c|c|}
\hline $p$ & $b_p$ & $a_p$ & $a_p+5pb_p$ & $(a_p^2-2p^3)+5p^2(b_p^2-2p)$ & $\#\overline{X}_{\mathbf{a}}(\FF_p)$ & $\#\overline{X}_{\mathbf{a}}(\FF_{p^2})$\\\hline
$3$ & $-2$ & $8$ & $-22$ & $-80$ & $590$ & $4860$\\
$11$ & $0$ & $-28$ & $-28$ & $-15188$ & $7300$ & $2451040$\\
$13$ & $-4$ & $18$ & $-10$ & $-4070$ & $10630$ & $6132180$\\
$17$ & $6$ & $74$ & $584$ & $-1460$ & $18100$ & $27910480$\\
$19$ & $2$ & $80$ & $270$ & $-68688$ & $23690$ & $52995260$\\
$29$ & $-6$ & $190$ & $-680$ & $-105188$ & $64220$ & $626794000$\\
$31$ & $-4$ & $72$ & $-548$ & $-275428$ & $74980$ & $929380800$\\
$113$ & $6$ & $1378$ & $4768$ & $-13117460$ & $2017820$ & 2089302575920
\\\hline
\end{tabular}
\vskip10pt
\begin{prop} For any prime $\ell$ we have
$$\det(I-\rho_{\ell}(\Frob_p^{-1}|H^3_{\text{\'et}}(X_{-1/7,\Qbar}, \Q_{\ell}))p^{-s})=(1-a_pp^{-s}+p^{3-2s})(1-pb_pp^{-s}+p^{1-2s}),$$ for $p=3,11,13,17,19,29,31$ and $113$.
\end{prop}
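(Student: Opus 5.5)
The plan is to read the statement off from Proposition \ref{count}, viewing $H^3_{\text{\'et}}(X_{-1/7,\Qbar},\Q_\ell)$ as the subspace of invariants in $H^3_{\text{\'et}}(\overline{X}_{\mathbf{a},\Qbar},\Q_\ell)$ under the cyclic group $G$ of order $10$. The $p^{1-2s}$ in the displayed statement (and in the displayed formula of Proposition \ref{count}) I read as a misprint for $p^{3-2s}$, consistent with the display preceding Proposition \ref{count}. Since $G$ acts by automorphisms defined over $\Q$ and we use $\Q_\ell$-coefficients, the pullback identifies the cohomology of the quotient with the $G$-invariants, $G_{\Q}$-equivariantly. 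Hence the degree-$4$ polynomial $\det(I-\rho_\ell(\Frob_p^{-1})X)$ on $H^3(X_{-1/7})$ divides the degree-$12$ polynomial $(1-a_pX+p^3X^2)(1-pb_pX+p^3X^2)^5$ of Proposition \ref{count}. The task is to show which degree-$4$ factor it is.

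For this I would use a genuine Galois filtration, not just the point counts. From the proof of Proposition \ref{count}, the $8$-dimensional subquotient $W_{\mathbf{a}}/(W_{\mathbf{a}}\cap W_{\mathbf{a}}^\perp)$ coming from the ruled surfaces $E^{\mathbf{a}}_{ij}$ is, as a $G_{\Q}$-representation, four copies of $\rho_{g,\ell}(-1)$. Let $V$ be the complementary $4$-dimensional part of the semisimplification, which has characteristic polynomial $F$. Then $H^3(\overline{X}_{\mathbf{a}})^{\mathrm{ss}}\simeq V\oplus\rho_{g,\ell}(-1)^{4}$, and $H^3(X_{-1/7})^{\mathrm{ss}}$ is a $4$-dimensional sub-semisimple-representation of this. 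I would work with $\ell=5$, or any $\ell$ distinct from $2,7$ and from $p$, so that $p$-adic Hodge theory applies; the $\ell$-independence of characteristic polynomials then transfers the conclusion to every $\ell$.

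Next I would use Hodge--Tate weights. Since $h^{3,0}=h^{2,1}=h^{1,2}=h^{0,3}=1$ for $X_\phi$, the representation $H^3(X_{-1/7})$ has Hodge--Tate weights $\{0,1,2,3\}$, each with multiplicity one. The representation $\rho_{g,\ell}(-1)$ has weights $\{1,2\}$ only, so the weights $0$ and $3$ must come from a constituent of $V$. If $V$ lies entirely in the invariants, then $H^3(X_{-1/7})^{\mathrm{ss}}=V$ and its characteristic polynomial is $F=(1-a_pX+p^3X^2)(1-pb_pX+p^3X^2)$, as required. Otherwise the invariants contain a $2$-dimensional constituent $V_0\subset V$ with weights $\{0,3\}$, together with a $2$-dimensional piece $V_1$ of weights $\{1,2\}$ taken from the remaining part of $V$ or from $\rho_{g,\ell}(-1)^4$. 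Since $E=\mathbf{14.a4}$ has no CM, $\rho_{g,\ell}(-1)$ is irreducible, and every $2$-dimensional constituent of $\rho_{g,\ell}(-1)^4$ is a copy of it. So the characteristic polynomial of $V_1$ is again $1-pb_pX+p^3X^2$ whether $V_1$ comes from $V$ or from $\rho_{g,\ell}(-1)^4$, unless $V_1$ comes from $V$ in some mixed way.

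The main obstacle is to show that $V_0$ has characteristic polynomial exactly $1-a_pX+p^3X^2$, so that no mixed splitting of the roots of $F$ occurs. My first attempt would be a determinant argument. Since $V_0$ has Hodge--Tate weights $0,3$, its determinant is $\epsilon_\ell^{-3}$ times a finite-order character unramified outside $\{2,7\}$, so $\det(\Frob_p^{-1}\mid V_0)=\pm p^3$ up to a root of unity. I would then check, for each of the finitely many listed $p$, that among the pairings of the four roots of $F$ only the pairing into the roots of the $f$-factor and the roots of the $g$-factor gives determinants of this form together with a quadratic having the right Hodge--Tate behaviour. Both quadratic factors of $F$ are irreducible over $\Q$ for these $p$ by inspection of the table of values of $a_p$ and $b_p$. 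If this becomes messy, a cleaner fallback is to compare traces directly: compute $\tr(\Frob_p^{-1}\mid H^3(X_{-1/7}))$ from a point count on the quotient $X_{-1/7}$ via the Lefschetz formula, and check that it equals $a_p+pb_p$. That fixes which roots occur, since the only competing possibility $2pb_p$ differs from $a_p+pb_p$ in every row of the table.
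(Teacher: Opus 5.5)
Your reading of $p^{1-2s}$ as a misprint for $p^{3-2s}$ is right, and your reduction to the $G$-invariants is fine. It can be sharpened without any Hodge--Tate bookkeeping. The characteristic polynomial of $\Frob_p^{-1}$ on $H^3_{\text{\'et}}(X_{-1/7,\Qbar},\Q_\ell)$ has rational coefficients and divides $(1-a_pX+p^3X^2)(1-pb_pX+p^3X^2)^5$. The two quadratic factors there are irreducible over $\Q$ (negative discriminants) and distinct, since $a_p\neq pb_p$. So the polynomial is either $(1-a_pX+p^3X^2)(1-pb_pX+p^3X^2)$ or $(1-pb_pX+p^3X^2)^2$.

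The whole content of the proposition is excluding the second option, and your primary argument does not do this. Take $V=V_0\oplus V_1'$, where $V_0$ (weights $\{0,3\}$) has Frobenius polynomial $1-pb_pX+p^3X^2$ at $p$, $V_1'$ (weights $\{1,2\}$) has $1-a_pX+p^3X^2$, and the invariants are $V_0\oplus\rho_{g,\ell}(-1)$. Then $\det(\Frob_p^{-1}\mid V_0)=p^3$, which is exactly the permitted shape. Moreover, Hodge--Tate weights at $\ell\neq p$ say nothing about how the Frobenius roots at $p$ are distributed among constituents, so ``the right Hodge--Tate behaviour'' cannot be tested from your data. (One could exclude this swap $p$-adically, taking $\ell=p$: by weak admissibility, a crystalline constituent with Hodge--Tate weights $\{1,2\}$ has no unit-root crystalline Frobenius eigenvalue, and $p\nmid a_p$ for every listed $p$. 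But that is a different argument, needing the crystalline comparison.)

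Your fallback, comparing the trace on the quotient with $a_p+pb_p$ versus $2pb_p$, would settle it, but that computation is not available. Section 5 only counts points on $\overline{X}_{\mathbf{a}}$. You would need $\#X_{-1/7}(\FF_p)$, for example via the ten twisted counts $\#\{x\in\overline{X}_{\mathbf{a}}(\Fbar_p):\Frob(x)=g(x)\}$ for $g\in G$, together with $h^{1,1}(X_{-1/7})=5$.

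The missing idea, which the paper uses, is structural and needs no further point counting. The group $G$ preserves $W_{\mathbf{a}}$. The Hulek--Verrill intersection numbers of the classes $\alpha^{ij},\beta^{ij}$ show that every element of $W_{\mathbf{a}}$ fixed by the $5$-cycle (a combination of the four orbit sums of the $\alpha^{ij}$ and $\beta^{ij}$) is orthogonal to all of $W_{\mathbf{a}}$. Hence $W_{\mathbf{a}}/(W_{\mathbf{a}}\cap W_{\mathbf{a}}^{\perp})\simeq\rho_{g,\ell}(-1)^4$ has no non-zero $G$-invariants. Since $G$ commutes with $G_{\Q}$, this $8$-dimensional subquotient lies in the $8$-dimensional non-trivial $G$-isotypic part, which it therefore exhausts up to semisimplification. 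The $4$-dimensional invariant part must then carry exactly the complementary factor from Proposition \ref{count}, for all listed $p$ at once.
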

\begin{proof} Letting $\mathbf{a}=(1:1:1:1:1:-7)$, $H^3_{\text{\'et}}(\overline{X}_{\mathbf{a},\Qbar},\Q_{\ell})$ is $12$-dimensional (since $h^{2,1}(\overline{X}_{\mathbf{a}})=5$, from \cite[\S 3.2.5]{HV}), with $H^3_{\text{\'et}}(X_{-1/7,\Qbar}, \Q_{\ell})$ the $4$-dimensional space of invariants under the group of automorphisms generated by $X_i\mapsto \frac{1}{X_i}$ and $X_i\mapsto X_{i+1}$ (reading the subscripts mod $5$). By the proof of Proposition \ref{count}, $\frac{W_{\mathbf{a}}}{(W_{\mathbf{a}}\cap W_{\mathbf{a}}^{\perp})}$ is $8$-dimensional. To show that these $4$- and $8$-dimensional contributions to $H^3_{\text{\'et}}(\overline{X}_{\mathbf{a},\Qbar},\Q_{\ell})$ are independent (therefore accounting for all of it), it suffices to check that any element of $W_{\mathbf{a}}$ fixed by the $5$-cycle $X_i\mapsto X_{i+1}$ is in $W_{\mathbf{a}}^{\perp}$. Then this proposition follows from the statement of Proposition \ref{count}.

The subspace $W_{\mathbf{a}}$ of $H^3_{\text{\'et}}(\overline{X}_{\mathbf{a},\Qbar},\Q_{\ell})$ is generated by elements denoted $\alpha^{ij}$ and $\beta^{ij}$ in \cite[\S 5]{HV}, where $1\leq i\neq j\leq 5$. (Each is Poincar\'e dual to the product of the Riemann sphere and a loop on an elliptic curve.) These elements are not linearly independent, but still we can say that any element of $W_{\mathbf{a}}$ fixed by the $5$-cycle must be a linear combination of four elements $$C:=\alpha^{12}+\alpha^{23}+\alpha^{34}+\alpha^{45}+\alpha^{51}, D:=\beta^{12}+\beta^{23}+\beta^{34}+\beta^{45}+\beta^{51},$$
$$E:=\alpha^{13}+\alpha^{24}+\alpha^{35}+\alpha^{41}+\alpha^{52}, F:=\beta^{13}+\beta^{24}+\beta^{35}+\beta^{41}+\beta^{52}.$$
It is easy to check, using  \cite[Lemmas 5.8, 5.9, 5.10]{HV}, that each of $C,D,E$ and $F$ is orthogonal to each $\alpha^{ij}$ and $\beta^{ij}$. For example,
$$C\cdot\beta^{12}=(\alpha^{12}+\alpha^{23}+\alpha^{34}+\alpha^{45}+\alpha^{51})\cdot\beta^{12}=(-2)+1+0+0+1=0.$$
\end{proof}
\begin{remar} This is enough for what we need for the proofs of Propositions \ref{Yp} and \ref{Zp}, to make those results unconditional, in fact it was not really necessary to check $p=3$.\hfill$\triangle$
\end{remar}
\begin{remar} The conjecture of Meyer and Verrill \cite[p.157, \S 5.8]{Me}, that the $L$-function associated with $H^3_{\text{\'et}}(\overline{X}_{\mathbf{a},\Qbar}, \Q_{\ell})$ is $L(f,s)L(g,s-1)^5$, was based on numerical evidence from point counts, presumably similar to those above. It is an easy consequence of our main Theorem \ref{main}, given that the presence of the factor $L(g, s-1)^4$ was already known, as noted in the proof of Proposition \ref{count}.\hfill$\triangle$
\end{remar}
\begin{remar} The other two apparent rank-$2$ attractor points $\phi=33\pm 8\sqrt{17}$, reciprocal to each other, units of norm $1$, lead to elliptic curves $\mathcal{E}_{1,1,1,\phi^{-1}}$ isomorphic to the curves with Weierstrass equations $$y^2=x(x^2+(494\mp 120\sqrt{17})x+(33\mp 8\sqrt{17})),$$ which have minimal models $$y^2+xy+\frac{1}{2}(3\mp\sqrt{17})y=x^3+\frac{1}{2}(1\mp\sqrt{17})x^2+\frac{1}{2}(-53\mp 11\sqrt{17})x+\frac{1}{2}(-117\mp 27\sqrt{17}).$$ These conjugate curves over $\Q(\sqrt{17})$ are $\Q$-curves, isogenous to their Galois conjugates (each other). One of them is $\mathbf{4.1}$-$\mathbf{a11}$ among elliptic curves over $\Q(\sqrt{17})$ in \cite{LMF}. The $\Gal(\Q(\sqrt{17})/\Q)$-conjugate $\ell$-adic representations of $G_{\Q(\sqrt{17})}$ attached to these two curves are isomorphic to each other, the restrictions of the $\ell$-adic representations of $G_{\Q}$ attached to the modular form $g=\mathbf{34.2.b.a}$ appearing in Conjecture \ref{Conj2}. Thus the construction of Hulek and Verrill puts a factor $L(\rho_{g,\ell}|_{G_{\Q(\sqrt{17})}},s-1)^4$ into the $L$-function of $\overline{X}_{\mathbf{a}}$.\hfill$\triangle$
\end{remar}
\begin{remar} It will follow from Theorem \ref{main} below that (for primes $p\neq 2,7$, and $\phi=-1/7$) the character sum
$$S:=\sum_{x,y,z\in \FF_p^{\times}}\left(\frac{((1+x+y+z)\left(1+\frac{1}{x}+\frac{1}{y}+\frac{1}{z}\right)-1-(\phi^{-1}))^2-4(1)(\phi^{-1})}{p}\right)$$ satisfies
$$S+(p-1)^3+(48p^2+46p+14)=\#\overline{X}_{\mathbf{a}}(\FF_p)=1+p^3+45(p^2+p)-(a_p+pb_p),$$
which implies that $$S=-(a_p+pb_p+4p+12).$$
Notice how the $45p^2$ on each side cancels, to avoid getting something of a larger order of magnitude than one expects from a random walk of this many steps. (We know that $|a_p|$ and $|pb_p|$ are bounded by $2p^{3/2}$.)\hfill$\triangle$
\end{remar}

\subsection{Checking the count for the two-equations construction}\label{2equations}
Bearing in mind the earlier Remark \ref{logic}, we ought to check that $\#\mathrm{HV}_{\mathbf{a}}(\FF_p)=\#\overline{X}_{\mathbf{a}}(\FF_p)$. (The obvious modifications will then show that also $\#\mathrm{HV}_{\mathbf{a}}(\FF_{p^2})=\#\overline{X}_{\mathbf{a}}(\FF_{p^2})$.) In the equation
$$\#\overline{X}_{\mathbf{a}}(\FF_p)=48p^2+46p+14$$ $$+\sum_{x,y,z\in \FF_p^{\times}}\left(\left(\frac{((1+x+y+z)\left(1+\frac{1}{x}+\frac{1}{y}+\frac{1}{z}\right)-1-(\phi^{-1}))^2-4(1)(\phi^{-1})}{p}\right)+1\right),$$ if we abbreviate the large sum by $\Sigma$, this breaks up as contributions 
$\Sigma-2(p^2-3p+3)$ from $X_{\mathbf{a}}\cap T$, $50p^2+10p+20$ from $X_{\mathbf{a}}\backslash T$, and $30p$ from the small resolution that produces $\overline{X}_{\mathbf{a}}$ from $X_{\mathbf{a}}$.

So it suffices to check the following, where $T'$ is now a $5$-dimensional torus $X_1X_2X_3X_4X_5X_6\neq 0$.
\begin{prop}\label{HVcount} For any good prime $p$, 
$\#(\mathrm{HV}_{\mathbf{a}}\backslash T')(\FF_p)=50p^2+40p+20$.
\end{prop}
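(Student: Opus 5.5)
The plan is to stratify $\mathrm{HV}_{\mathbf{a}}\backslash T'$ by the torus orbits of the ambient $5$-dimensional toric variety and count $\FF_p$-points orbit by orbit. The ambient variety is the toric variety of the Newton polytope of $(\sum_i X_i)(\sum_i a_i/X_i)$, the $A_5$ root polytope, so it is the permutohedral variety of dimension $5$. I take $\mathrm{HV}_{\mathbf{a}}$ to be the complete intersection of the closures of $\{\sum_{i=1}^6 X_i=0\}$ and $\{\sum_{i=1}^6 a_i/X_i=0\}$, following \cite[\S 2.4]{COKM}, with $a_1=\dots=a_5=1$ and $a_6=\phi^{-1}$.

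\emph{Orbits and restricted equations.} The torus orbits of the permutohedral variety are indexed by ordered set partitions $(B_1,\dots,B_k)$ of $\{1,\dots,6\}$. The open torus is $k=1$, and the boundary consists of the orbits with $k\geq 2$. The orbit attached to $(B_1,\dots,B_k)$ is a product $\prod_{j}\mathbb{G}_m^{|B_j|}/\mathbb{G}_m$, so it has dimension $6-k$. Approaching such an orbit along its one-parameter subgroup, the linear form $\sum X_i$, suitably rescaled, tends to its initial form $\sum_{i\in B_1}X_i$, the block of largest weight. Similarly $\sum a_i/X_i$ tends to $\sum_{i\in B_k}a_i/X_i$, supported on the opposite extreme block. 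When $k\geq 2$ these two blocks are distinct. So on the orbit the two equations involve disjoint sets of variables, and the middle blocks impose no condition.

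This rests on $\mathrm{HV}_{\mathbf{a}}$ meeting each orbit transversally in exactly the locus cut out by these initial forms, so that no embedded or extra components appear at the boundary. I would take this from the smoothness statement of \cite{COKM}, checked modulo good primes $p$. I expect this to be the main obstacle: making sure the reduction mod $p$ of the closure really is the naive initial-form locus on every orbit, including the small orbits (the curves and points, $k=5,6$). The blocks containing index $6$ need particular care, because there $a_6=\phi^{-1}$ appears. I would handle these by rescaling $y_i=a_i/X_i$, which is legitimate because $a_6\in\FF_p^{\times}$ for good $p$.

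\emph{Counting.} Let $N(m)$ be the number of points of $\{x_1+\dots+x_m=0\}$ in the torus $(\FF_p^{\times})^m/\FF_p^{\times}$. Counting solutions in $(\FF_p^{\times})^m$ by the standard inclusion--exclusion or character-sum argument and dividing by $p-1$ gives
$$N(m)=\frac{(p-1)^{m-1}+(-1)^m}{p}.$$
Thus $N(1)=0$, $N(2)=1$, $N(3)=p-2$, $N(4)=p^2-3p+3$ and $N(5)=(p-1)^4/p+\dots$. The reciprocal equation gives the same count $N(|B_k|)$ after the substitution $y_i=a_i/x_i$. A middle block $B_j$ contributes $(p-1)^{|B_j|-1}$. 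Hence
$$\#(\mathrm{HV}_{\mathbf{a}}\backslash T')(\FF_p)=\sum_{k\geq 2}\ \sum_{(B_1,\dots,B_k)} N(|B_1|)\,N(|B_k|)\prod_{1<j<k}(p-1)^{|B_j|-1}.$$

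The final step is to evaluate this finite sum. Since $N(1)=0$, only partitions with $|B_1|,|B_k|\geq 2$ contribute, which leaves few cases. I would organize them by the composition $(|B_1|,\dots,|B_k|)$ of $6$ and weight each by the multinomial coefficient counting ordered partitions of that shape. The cases are $(2,4),(4,2),(3,3),(2,2,2),(2,1,3),(3,1,2),(2,1,1,2)$ and $(2,2,2)$-type variants. Summing should give $50p^2+40p+20$. As a consistency check, the leading $50p^2$ should come from the two-dimensional strata, namely the $k=2$ and $k=3$ partitions with $|B_1|=|B_k|=2$ and the $(3,3)$ terms. A further check is that the total, plus the torus count $\Sigma-2(p^2-3p+3)$, reproduces the Hulek--Verrill formula quoted above.
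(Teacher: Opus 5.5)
Your approach is correct. It is the same stratification the paper uses, only indexed intrinsically. The loci where subsets of the chart coordinates vanish ($x,y,z,w,v$ for $\sigma_1$, $a,\dots,e$ for $\sigma_3$) are exactly the torus orbits. The paper's strata, up to $H$-translation, are your ordered set partitions grouped by shape up to reversal and by which block contains the index $6$.

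The paper works in three explicit charts, counts translates under $S_5\times\Z/2$ by orbit--stabiliser, and checks by hand which strata of $\sigma_2,\sigma_3$ are new. You instead absorb $a_6$ via $y_i=a_i/X_i$, so the count on an orbit depends only on block sizes; the full $S_6$ symmetry and the closed formula for $N(m)$ then make the bookkeeping automatic.

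You should actually finish the sum, and it works. With weights $15,15,20,60,60,90,180$ for the shapes $(2,4),(4,2),(3,3),(2,1,3),(3,1,2),(2,2,2),(2,1,1,2)$,
\[30(p^2-3p+3)+20(p-2)^2+120(p-2)+90(p-1)+180=50p^2+40p+20.\]
This is the paper's sum term by term. For example, $(2,2,2)$ is the $60+30$ translates of $w=y=0$ and $b=d=0$, and $(2,1,1,2)$ is the $120+60$ translates of $w=z=y=0$ and $b=d=e=0$.

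There are three things to correct.

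\emph{The ambient variety.} The toric variety of the Newton polytope $\mathrm{conv}\{e_i-e_j\}$ is \emph{not} the permutohedral variety. Its normal fan has only $30$ non-simplicial maximal cones, and its orbits are indexed by pairs $(I,J)$ of disjoint subsets, with no middle blocks. Counting there gives $50p^2-50p+20$, missing exactly the $90p$ from the $(2,2,2)$ and $(2,1,1,2)$ strata. You must take the ambient fan from \cite[(2.9)]{COKM}: the $720$ Weyl chambers of the braid arrangement, which is what your ordered-partition indexing presupposes.

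\emph{The ``main obstacle''.} This step needs no smoothness or transversality input. The braid fan refines the normal fans of $\Delta$ and $-\Delta$. So on each chart $U_\sigma\simeq\A^5$ the variety is cut out by $X_{j}^{-1}\sum X_i$ and $X_{j'}\sum a_i/X_i$ for the appropriate extreme indices $j,j'$; these are the paper's chart equations. Restricting monomials to an orbit then formally gives your initial forms. Embedded structure is irrelevant to point counts. The boundary loci have dimension at most $2$, so they cannot be extra components of a complete intersection in a $5$-fold.

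\emph{Your sanity check.} The $p^2$ terms come only from the $k=2$ shapes $(2,4),(4,2),(3,3)$, giving $30+20=50$. The shape $(2,2,2)$ contributes only $90(p-1)$. There are also no further ``$(2,2,2)$-type variants'' beyond the seven shapes listed.
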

\begin{proof} We set $X_6=1$ and use $X_1,\ldots, X_5$ as coordinates on $T'$. An element $\mathbf{b}=(b_1, b_2, b_3, b_4, b_5)\in \ZZ^5$ gives a cocharacter $\mathbb{G}_m\rightarrow T'$, $x\mapsto x^{\mathbf{b}}$, i.e.
$X_i=x^{b_i}$ for $1\leq i\leq 5$. The $5$-dimensional toric variety in which $\mathrm{HV}_{\mathbf{a}}$ lives is covered by $5$-dimensional affine patches $T_{\sigma}$, where $\sigma$ ranges over the top-dimensional cones in the associated fan. According to \cite[(2.9)]{COKM}, there are $720$ such cones, all translates of three cones $\sigma_1$, $\sigma_2$ and $\sigma_3$ under the natural action of $H:=S_5\times(\Z/2\Z)$, where $S_5$ permutes the entries of $\mathbf{b}$ (hence permutes the $X_i$) and the non-trivial element of $(\Z/2\Z)$ maps $\mathbf{b}$ to $-\mathbf{b}$, i.e. inverts each $X_i$.

The cone $\sigma_1$ is generated by elements 
$$\mathbf{b}_1=(1,0,0,0,0), \mathbf{b}_2=(1,1,0,0,0),\mathbf{b}_3=(1,1,1,0,0), \mathbf{b}_4=(1,1,1,1,0),$$ $\mathbf{b}_5=(1,1,1,1,1),$ which we associate with variables $x,y,z,w,v$ respectively, i.e. $$(X_1,X_2,X_3,X_4,X_5)=x^{\mathbf{b}_1}y^{\mathbf{b}_2}z^{\mathbf{b}_3}w^{\mathbf{b}_4}v^{\mathbf{b}_5}=(xyzwv,yzwv,zwv,wv,v).$$ For non-zero values of $x,y,z,w,v$ this gives a parametrisation of $T'$, but allowing the variables to take the value $0$ too gives the affine space $T_{\sigma_1}$. 

Now $\mathrm{HV}_{\mathbf{a}}\cap T'$ is given by the equations
$$X_1+X_2+X_3+X_4+X_5+X_6=\frac{1}{X_1}+\frac{1}{X_2}+\frac{1}{X_3}+\frac{1}{X_4}+\frac{1}{X_5}+\frac{\phi^{-1}}{X_6}=0,$$ i.e. 
$$xyzwv+yzwv+zwv+wv+v+1=\frac{1+x+xy+xyz+xyzw+\phi^{-1}xyzwv}{xyzwv}=0.$$
We may remove the factor $\frac{1}{xyzwv}$, which is non-zero on $T'$, and find that the Zariski closure in $T_{\sigma_1}$, i.e. $\mathrm{HV}_{\mathbf{a}}\cap T_{\sigma_1}$, is given by the pair of equations
$$xyzwv+yzwv+zwv+wv+v+1=1+x+xy+xyz+xyzw+\phi^{-1}xyzwv=0.$$
To count some points in $(\mathrm{HV}_{\mathbf{a}}\backslash T')(\FF_p)$ we need to set some of the variables equal to $0$. It is impossible to have $v=0$ or $x=0$, which would imply $1=0$.

If $w=0$ then $v+1=1+x+xy+xyz=0$. To count the solutions with $xyzv\neq 0$, $v=-1$, and as long as $1+x+xy\neq 0$ there will be a unique non-zero solution for $z$. For each of the $(p-2)$ values of $x\neq 0,-1$ there is a unique non-zero $y$ such that $1+x+xy=0$. Therefore the number of solutions to $1+x+xy\neq 0$, with $x,y$ both non-zero, is $(p-1)^2-(p-2)=p^2-3p+3$. The number of translates of $w=0, xyzv\neq 0$ under the action of $H$ is (recalling that $\mathbf{b}_4=(1,1,1,1,0)$) $5\times 2=10$.

Similarly there are $p^2-3p+3$ solutions to $y=0, xzwv\neq 0$, which has $20$ translates, and $(p-2)^2$ to $z=0, xywv\neq 0$, which has $20$ translates. 

The curve $w=z=0$, $xyv\neq 0$, has $40=240/6$ translates, by the orbit-stabiliser formula, since $\#H=240$ and the stabiliser of $(\mathbf{b}_3, \mathbf{b}_4)$ is $S_3$ acting on the first $3$ entries. Alternatively, in a translate of $\mathbf{b}_4=(1,1,1,1,0)$, the non-zero entries can be $\pm 1$, there are $5$ ways to choose where to put the $0$, and $4$ ways to choose which of them will be $0$ in the translate of $\mathbf{b}_3$, that is $2\times 5\times 4=40$ choices. The remaining equations for this curve are $1+v=1+x+xy=0$, so there are $(p-2)$ solutions, as we already found.

Similarly the curve $w=y=0$, $xzv\neq 0$ with $(p-1)$ points, has $60$ translates, and the curve $y=z=0$, $xwv\neq 0$, with $(p-2)$ points, also has $60$ translates. Setting $w=z=y=0$ imposes $x=v=-1$, a single point with $\frac{240}{2}=2\times 5\times 4\times 3=120$ translates.

All the points in $((\mathrm{HV}_{\mathbf{a}}\cap T_{\sigma_2})\backslash T')(\FF_p)$ were already counted in translates of the surfaces, curves and points considered for $\sigma_1$, since $\sigma_2$ is generated by
$$(-1,0,0,0,0), (0,0,0,0,1), (0,0,0,1,1), (0,0,1,1,1), (0,1,1,1,1).$$ Variables associated with the first two cannot be set equal to $0$ without a contradiction $1=0$, but the ordered triple of the last three is a translate under $H$ of $(\mathbf{b}_2, \mathbf{b}_3, \mathbf{b}_4)$.

The cone $\sigma_3$ is generated by
$$(-1,0,0,0,0), (-1,-1,0,0,0), (0,0,0,0,1), (0,0,0,1,1), (0,0,1,1,1),$$ with which we associate variables $a,b,c,d,e$, respectively. We have $$(X_1,X_2,X_3,X_4,X_5)=\left(\frac{1}{ab}, \frac{1}{b}, e, de, cde\right),$$ and the equations are
$$ab+1+a+abe+abde+abcde=abcde+bcde+cd+c+1+\phi^{-1}cde=0.$$
We cannot set $a=0$ or $c=0$, while $b=0$ and $d=0$ are translates of $y=0$, and $e=0$ is a translate of $z=0$. Likewise, $d=e=0$ is a translate of $y=z=0$. However, $b=d=0$ (and $ace\neq 0$), with $30$ translates and $(p-1)$ points, gives something new. So do $b=e=0$, $acd\neq 0$ ($20$ translates and $(p-2)$ points) and $b=d=e=0$, $ac\neq 0$ ($60$ translates, $1$ point).

Putting everything together, the number of points in $(\mathrm{HV}_{\mathbf{a}}\backslash T')(\FF_p)$ is
$$(10+20)(p^2-3p+3)+20(p^2-4p+4)+(40+60+20)(p-2)+(60+30)(p-1)+(120+60)$$
$=50p^2+40p+20,$ as required.

\end{proof}

\section{Plus/minus $p$-adic $L$-functions and Selmer groups at a supersingular prime}
Let $E/\Q$ be an elliptic curve with good reduction at an odd prime $p$. We suppose that $a_p(E)=0$, where $\#E(\FF_p)=1+p-a_p(E)$. If $p\geq 5$, this is equivalent to $E$ having supersingular reduction at $p$, i.e. $p\mid a_p(E)$. Both eigenvalues of Frobenius $\alpha=\pm\sqrt{p}\,i$ satisfy the condition $\ord_p(\alpha)<k-1$, where $k=2$ is the weight of the associated modular form. (In this section, the meaning of the notation $\alpha$ is different from the rest of the paper.) Consequently, for either choice there is a $p$-adic $L$-function $L_p(E,\alpha,T)\in \Q_p(\alpha)[[T]]$ \cite{AV, V}, but unlike the case of ordinary reduction, the coefficients of the power series are not bounded.

Let $\eta:(\Z/p\Z)^{\times}\rightarrow \Q(\zeta_{p-1})^{\times}$ be a Dirichlet character. Fixing embeddings from $\Q(\zeta_{p-1})$ to $\Q_p$ and to $\C$, $\eta$ may be viewed as taking values in $\Z_p^{\times}$ or in $\C^{\times}$. Identifying $(\Z/p\Z)^{\times}$ with $\Gal(\Q(\zeta_p)/\Q)$, it may also be thought of as a character of $G_{\Q}$. More generally, there are $p$-adic $L$-functions $L_p(E,\eta,\alpha,T)\in \Q_p(\alpha)[[T]]$ \cite{AV, V}.

Kobayashi \cite[Theorems 3.2, 6.3, end of \S 8]{Ko} proved the existence of $L_p^{\pm}(E,\eta, T)\in\ZZ_p[[T]]$ such that $$L_p(E,\eta, \alpha, T)=L_p^-(E,\eta,T)\cdot\log_p^+(T)+L_p^+(E,\eta,T)\cdot\log_p^-(T)\cdot\alpha.$$ Here $\log_p^{\pm}(T)\in\Q_p[[T]]$ are certain power series with zeroes at $\zeta_{p^n}-1$, where $\zeta_{p^n}$ runs through all $p$-power roots of unity (even powers for $\log^+(T)$, odd powers for $\log^-(T)$). We shall need the following small part of the interpolation properties of $L_p^{\pm}(E,\eta,T)$.

If $\eta$ is non-trivial and even (i.e. $\eta(-1)=1$) then
\begin{equation}\label{interp} L_p^+(E,\eta,0)=-\frac{p}{\tau(\overline{\eta})}\,\frac{L(E,\overline{\eta},1)}{\Omega_E},\end{equation}
where $\tau(\overline{\eta})$ is a Gauss sum and $\Omega_E$ is the real period of a N\'eron differential.
 
Kobayashi credits the theorem to Pollack, but gives a new proof. Actually the theorem stated by Pollack \cite[Theorem 5.6]{P} only covers trivial $\eta$, whereas our application requires a non-trivial $\eta$.

Now let $K_{-1}:=\Q$, and for $n\geq 0$ let $K_n:=\Q(\zeta_{p^{n+1}})$, with $K_{\infty}:=\cup K_n$. For each $n$, the $p$-primary Selmer group is defined by 
$$\mathrm{Sel}(E/K_n):=\Ker\left(H^1(K_n, E[p^{\infty}])\xrightarrow{\mathrm{res}}\prod_v\,\frac{H^1(K_{n,v}, E[p^{\infty}])}{E(K_{n,v})\otimes \Q_p/\Z_p}\right),$$
where $v$ runs over all places of $K_n$, and $\mathrm{Sel}(E/K_{\infty}):=\varinjlim \mathrm{Sel}(E/K_n)$. Then following Kobayashi \cite[Definition 2.1]{Ko} we define
$$E^+(K_{n,v}):=\{P\in E(K_{n,v})\,\mid\,\Tr_{K_{n,v}/K_{m+1,v}}P\in E(K_{m,v})\,\text{for all even $m$ }(0\leq m<n)\},$$
$$E^-(K_{n,v}):=\{P\in E(K_{n,v})\,\mid\,\Tr_{K_{n,v}/K_{m+1,v}}P\in E(K_{m,v})\,\text{for all odd $m$ }(-1\leq m<n)\},$$ 
$$\mathrm{Sel}^{\pm}(E/K_n):=\Ker\left(H^1(K_n, E[p^{\infty}])\xrightarrow{\mathrm{res}}\prod_v\,\frac{H^1(K_{n,v}, E[p^{\infty}])}{E^{\pm}(K_{n,v})\otimes \Q_p/\Z_p}\right),$$ and $\mathrm{Sel}^{\pm}(E/K_{\infty}):=\varinjlim \mathrm{Sel}^{\pm}(E/K_n)$. (The word ``all'' does not appear in Kobayashi's definition of $E^{\pm}(K_{n,v})$, but I have taken it from \cite[Definition 2.1]{KO}.)
The Pontryagin duals $\Hom(\mathrm{Sel}^{\pm}(E/K_n), \Q_p/\Z_p)$ and $\Hom(\mathrm{Sel}^{\pm}(E/K_{\infty}), \Q_p/\Z_p)$ are denoted $\mathfrak{X}^{\pm}(E/K_n)$ and $\mathfrak{X}^{\pm}(E/K_{\infty})$, respectively.

Let $\mathcal{G}_n:=\Gal(K_n/\Q)$, $\mathcal{G}_{\infty}:=\Gal(K_{\infty}/\Q)$, $\Lambda_n:=\Z_p[\mathcal{G}_n]$ and $\Lambda:=\Z_p[[\mathcal{G}_{\infty}]]$. Then $\Lambda_n$ and $\Lambda$ act on $\mathfrak{X}^{\pm}(E/K_n)$ and $\mathfrak{X}^{\pm}(E/K_{\infty})$, respectively. Note that $\mathcal{G}_{\infty}\simeq \Delta\times\Gamma$, with $\Delta\simeq (\Z/p\Z)^{\times}$ and $\Gamma\simeq\Z_p$. Choosing a topological generator $\gamma$ of $\Gamma$, and sending $\gamma\mapsto 1+T$ identifies $\Z_p[[\Gamma]]$ with $\Z_p[[T]]$ and $\Lambda$ with $\Z_p[\Delta][[T]]$.

The following is \cite[Theorem 2.2]{Ko}.
\begin{lem}\label{LambdaTorsion}
 $\mathfrak{X}^{\pm}(E/K_{\infty})$ are finitely generated $\Lambda$-torsion modules.
\end{lem}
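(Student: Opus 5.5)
The approach is to follow Kobayashi's proof of \cite[Theorem 2.2]{Ko}. It deduces torsionness of $\mathfrak{X}^{\pm}(E/K_{\infty})$ from Kato's Euler system together with a control/rank computation for the plus/minus local conditions.

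First I would handle finite generation. Because $\mathrm{Sel}^{\pm}(E/K_{\infty})\subseteq \mathrm{Sel}(E/K_{\infty})\subseteq H^1(K_{\infty},E[p^{\infty}])$, the dual $\mathfrak{X}^{\pm}(E/K_{\infty})$ is a quotient of the dual of $\mathrm{Sel}(E/K_{\infty})$. That dual is finitely generated over $\Lambda$ by the standard Nakayama argument: $\mathfrak{X}/\mathfrak{m}\mathfrak{X}$ is controlled by $\mathrm{Sel}(E/\Q)[p]$, which is finite, since $E(\Q)[p]$ and $E[p]^{G_{K_\infty}}$ are finite and $H^1$ of a finitely ramified extension with finite coefficients is finite. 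So it remains to prove that it is torsion, and it suffices to check each $\Delta$-isotypic component $e_\eta\mathfrak{X}^{\pm}$ over $\Z_p[[T]]$.

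For torsionness I would use Poitou--Tate duality. Let $\mathbb{H}^1_{\mathrm{Iw}}:=\varprojlim_n H^1(K_n, T_pE)$ be the Iwasawa cohomology; by Kato \cite{Ka} it has $\Lambda$-rank one on each isotypic piece, and Kato's zeta element $z_{\mathrm{Kato}}$ is non-torsion in it. Write $\mathrm{Sel}^0$ for the fine (strict) Selmer group, with local condition $0$ at $p$. There is an exact sequence
$$\mathbb{H}^1_{\mathrm{Iw}}\longrightarrow \frac{\varprojlim_n H^1(K_{n,p},T_pE)}{\varprojlim_n E^{\pm}(K_{n,p})\otimes\Z_p}\longrightarrow \mathfrak{X}^{\pm}(E/K_{\infty})\longrightarrow \mathfrak{X}^{0}(E/K_{\infty})\longrightarrow 0.$$
Kato's theorem gives that $\mathfrak{X}^0(E/K_\infty)$ is $\Lambda$-torsion. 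By Kobayashi's local analysis (using $a_p=0$ and the trace relations defining $E^{\pm}$), the local quotient in the middle is free of rank one over $\Lambda$, and the composite map, the Coleman map $\mathrm{Col}^{\pm}$, sends $z_{\mathrm{Kato}}$ to $L_p^{\pm}(E,\eta,T)$ on the $\eta$-component. Hence $\mathfrak{X}^{\pm}$ is torsion provided $\mathrm{Col}^{\pm}(z_{\mathrm{Kato}})\neq 0$.

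The main obstacle is the non-vanishing of $L_p^{\pm}(E,\eta,T)$ for every $\eta$. I would get it from Rohrlich's theorem that $L(E,\chi,1)\neq 0$ for all but finitely many characters $\chi$ of $p$-power conductor, combined with the interpolation formulas extending (\ref{interp}) at $T=\zeta_{p^n}-1$. For $n$ of the appropriate parity, $\log_p^{\mp}$ does not vanish there, so a nonzero twisted $L$-value forces $L_p^{\pm}\neq 0$. A secondary technical point is the local freeness claim, namely computing the rank of $\varprojlim E^{\pm}(K_{n,p})\otimes\Z_p$ via Honda theory of the formal group at a supersingular prime. For this I would simply cite Kobayashi.
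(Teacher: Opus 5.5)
Your outline is correct and is essentially Kobayashi's own proof of \cite[Theorem 2.2]{Ko}, which the paper simply cites: Poitou--Tate duality for the $\pm$ local conditions, torsionness of the fine Selmer group from Kato, $\mathrm{Col}^{\pm}(z_{\mathrm{Kato}})=L_p^{\pm}$, and Rohrlich's non-vanishing theorem to get $L_p^{\pm}(E,\eta,T)\neq 0$ for every $\eta$. One small slip: $\Lambda=\Z_p[\Delta][[T]]$ is semi-local rather than local, so the Nakayama step for finite generation should use $\mathfrak{X}/(p,T)\mathfrak{X}$ (equivalently, work on each $\eta$-component), controlled by cohomology over $K_0=\Q(\zeta_p)$; control by $\mathrm{Sel}(E/\Q)[p]$ only sees the trivial component.
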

Now a theorem of Kitajima and Otsuki \cite[Main Theorem 1.3]{KO} gives
\begin{lem}\label{NoFinite}
Neither $\mathfrak{X}^{\pm}(E/K_{\infty})$ has any non-trivial, finite $\Z_p[[T]]$-submodule.
\end{lem}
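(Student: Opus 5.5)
This is quoted from Kitajima and Otsuki, so the plan is to check that their hypotheses hold and then recall the shape of their argument, rather than reprove it from scratch.

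First I would set the hypotheses for the case we need: $E/\Q$ with good supersingular reduction at an odd prime $p$ and $a_p(E)=0$. Next I would decompose by characters of $\Delta$. Since $\Lambda=\Z_p[\Delta][[T]]$ and $p\nmid\#\Delta$, each $\mathfrak{X}^{\pm}(E/K_{\infty})$ splits into $\eta$-isotypic pieces $e_\eta\mathfrak{X}^{\pm}$. Each piece is a finitely generated torsion $\Z_p[[T]]$-module by Lemma \ref{LambdaTorsion}. So it suffices to treat each isotypic component separately.

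The core argument is the standard Greenberg-style one. For a finitely generated torsion $\Z_p[[T]]$-module $\mathfrak{X}$, having no non-trivial finite submodule is equivalent to two facts:
\begin{itemize}
\item $\mathfrak{X}/\omega_n\mathfrak{X}$ has no finite submodule for suitable $n$, where $\omega_n=(1+T)^{p^n}-1$;
\item $\mathfrak{X}$ has projective dimension at most $1$ over $\Z_p[[T]]$.
\end{itemize}
I would obtain the second fact from a surjectivity statement. The map
$$H^1(K_\Sigma/K_\infty,E[p^\infty])\to\bigoplus_{v\in\Sigma} \frac{H^1(K_{\infty,v},E[p^\infty])}{E^{\pm}(K_{\infty,v})\otimes\Q_p/\Z_p}$$
should be surjective. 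Here $\Sigma$ is a finite set of places containing $p$, $\infty$ and the bad primes, and the map is the global-to-local map whose kernel is $\mathrm{Sel}^{\pm}(E/K_\infty)$.

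The surjectivity would follow from the Poitou--Tate sequence together with three inputs:
\begin{itemize}
\item $\Lambda$-torsionness of $\mathfrak{X}^{\pm}$, which is Lemma \ref{LambdaTorsion};
\item the vanishing of $E(K_\infty)[p^\infty]$, which holds since $E[p]$ is irreducible as a $G_{\Q_p}$-module in the supersingular case;
\item the fact that $\mathrm{Sel}^{\pm}$ is the annihilator of its dual condition under local Tate duality. Kobayashi shows that $E^{\pm}(K_{n,v})\otimes\Q_p/\Z_p$ are exact annihilators of each other up to the $\pm$ switch.
\end{itemize}
Given surjectivity, I would use the weak Leopoldt statement $H^2(K_\Sigma/K_\infty,E[p^\infty])=0$, which follows from torsionness. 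The dual of the local terms, $E^{\pm}(K_{\infty,p})\otimes\Q_p/\Z_p$, has a known $\Lambda$-structure: Kobayashi shows the $\pm$ local condition dualises to a free $\Lambda$-module of rank one. Combining these gives the required bound on projective dimension. The standard lemma (Greenberg, Propositions 4.14--4.15 in his Iwasawa theory notes) then yields no finite submodule.

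The main obstacle is the local input at $p$: the precise $\Lambda$-module structure of $\varprojlim E^{\pm}(K_{n,p})$, and the control between finite levels $K_n$ and $K_\infty$, including the "for all $m$" subtlety in the definition. This is exactly where Kitajima--Otsuki do their work, and I would follow their argument, citing \cite[Main Theorem 1.3]{KO}, rather than redo it.
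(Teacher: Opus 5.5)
Your proposal agrees with the paper: its entire proof is the citation of \cite[Main Theorem 1.3]{KO} for $E/\Q$ with good supersingular reduction at an odd prime $p$ and $a_p(E)=0$, and that citation is also your load-bearing step. Two side claims in your sketch are wrong and should not be relied on: for a finitely generated $\Z_p[[T]]$-module, having no nonzero finite submodule is equivalent to projective dimension at most $1$ by itself, and your extra condition on $\mathfrak{X}/\omega_n\mathfrak{X}$ is not necessary, since $\Z_p[[T]]/(p)$ has no nonzero finite submodule yet every $\mathfrak{X}/\omega_n\mathfrak{X}$ is finite and nonzero; and the local conditions $E^{\pm}$ are not exact annihilators of one another under Tate duality, since already at level $0$ one has $E^{+}(K_{0,v})=E(K_{0,v})$, whose annihilator is all of $E(K_{0,v})\otimes\Z_p$, whereas $E^{-}(K_{0,v})=E(\Q_p)$ has rank $1$.
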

Note that since $p$ is ramified in $K_0=\Q(\zeta_p)/\Q$, the earlier result of B. D. Kim \cite{Kim} is not strong enough for our purposes. It follows that for each character $\eta$ of $\Delta$, the $\eta$-component $\mathfrak{X}^{\pm}(E/K_{\infty})^{\eta}$ is isomorphic as a $\Z_p[[T]]$-module to $\Z_p[[T]]/\mathrm{Char}(\mathfrak{X}^{\pm}(E/K_{\infty})^{\eta})$, the quotient by its ``characteristic ideal''. Kobayashi states even and odd main conjectures, and uses Kato's Euler system work \cite{Ka} to prove divisibilities towards these conjectures. What we need is the following, part of \cite[Theorem 4.1]{Ko}.
\begin{lem}\label{MCdiv}
If $\eta$ is even and non-trivial, and the representation of $G_{\Q}$ on $E[p]$ has image $\GL_2(\FF_p)$, then $\mathrm{Char}(\mathfrak{X}^{+}(E/K_{\infty})^{\eta})$ divides $L_p^+(E,\eta,T)$.
\end{lem}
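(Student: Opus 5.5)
The plan is to follow Kobayashi's reduction of the $\pm$-main conjecture divisibility to Kato's divisibility for the Iwasawa cohomology of the $p$-adic Tate module, in the following order. First, let $T=T_pE$. Let $\mathbf{H}^i_{\mathrm{Iw}}:=\varprojlim_n H^i(\Z[\zeta_{p^{n+1}},1/Np],T)$ and let $\mathbf{z}\in\mathbf{H}^1_{\mathrm{Iw}}$ be Kato's zeta element. Kato \cite{Ka} proves that $\mathbf{H}^1_{\mathrm{Iw}}$ is free of rank one over $\Lambda$ (more precisely over each $\Lambda^{\eta}$, under the hypothesis that $E[p]$ has image $\GL_2(\FF_p)$) and that $\mathbf{H}^2_{\mathrm{Iw}}$ is $\Lambda$-torsion. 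He also proves the divisibility
$$\mathrm{Char}\bigl((\mathbf{H}^2_{\mathrm{Iw}})^{\eta}\bigr)\ \big|\ \mathrm{Char}\bigl((\mathbf{H}^1_{\mathrm{Iw}}/\Lambda\mathbf{z})^{\eta}\bigr),$$
and it is exactly here that the big-image hypothesis on $E[p]$ is used, through the Euler system machinery.

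Second, I would construct Kobayashi's local $\pm$-Coleman maps $\mathrm{Col}^{\pm}:\varprojlim_n H^1(K_{n,\mathfrak p},T)\to\Lambda$ at the unique prime above $p$. They are built from the norm-compatible systems of points used to define $E^{\pm}(K_{n,v})$ (this is where $a_p(E)=0$ enters), by pairing local classes against those points and interpolating. The key properties I would verify are:
\begin{enumerate}
\item $\ker(\mathrm{Col}^{\pm})$ is the inverse limit of the orthogonal complements of $E^{\pm}(K_{n,\mathfrak p})\otimes\Q_p/\Z_p$ under local Tate duality;
\item $\mathrm{Col}^{\pm}(\mathrm{loc}_p\mathbf{z})=L_p^{\pm}(E,T)$, componentwise in $\eta$.
\end{enumerate}
Property (2) follows by comparing Kato's explicit reciprocity law, which relates $\mathrm{loc}_p\mathbf{z}$ to $L_p(E,\alpha,T)$ via Perrin-Riou's big exponential, with the decomposition $L_p(E,\eta,\alpha,T)=L_p^-\log_p^++L_p^+\log_p^-\alpha$ quoted above.

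Third, I would apply Poitou--Tate duality over $K_\infty$ and take $\eta$-components. This gives an exact sequence
$$0\to \mathbf{H}^1_{\mathrm{Iw}}/\Lambda\mathbf{z}\to \mathrm{Im}(\mathrm{Col}^{+})/\Lambda L_p^{+}\to \mathfrak{X}^{+}(E/K_\infty)\to \mathbf{H}^2_{\mathrm{Iw}}\to 0,$$
after checking that the localisation map $\mathbf{H}^1_{\mathrm{Iw}}\to\Lambda$ composed with $\mathrm{Col}^+$ is injective on the $\eta$-part. That injectivity holds because $L_p^+(E,\eta,T)\neq0$, by Rohrlich's nonvanishing of twisted $L$-values. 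Taking characteristic ideals (all the modules are torsion, by Lemma \ref{LambdaTorsion}) yields
$$\mathrm{Char}(\mathfrak{X}^+(E/K_\infty)^\eta)\cdot\mathrm{Char}\bigl((\mathbf{H}^1_{\mathrm{Iw}}/\mathbf{z})^\eta\bigr)=\mathrm{Char}\bigl((\mathrm{Im}\,\mathrm{Col}^+)^\eta/L_p^+\bigr)\cdot\mathrm{Char}\bigl((\mathbf{H}^2_{\mathrm{Iw}})^\eta\bigr).$$
Combining this with Kato's divisibility gives that $\mathrm{Char}(\mathfrak{X}^{+,\eta})$ divides $\mathrm{Char}\bigl((\mathrm{Im}\,\mathrm{Col}^+)^\eta/L_p^+\bigr)$.

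The main obstacle I expect is the last step: showing that $\mathrm{Col}^+$ is surjective onto $\Lambda^{\eta}$ for $\eta$ even and non-trivial, so that $(\mathrm{Im}\,\mathrm{Col}^+)^\eta/L_p^+=\Lambda^\eta/L_p^+(E,\eta,T)$ and the divisibility becomes exactly the statement. For the trivial character, the image of $\mathrm{Col}^+$ has finite index (the image is cut out by the factor $T$ coming from $E(\Q_p)$ versus $E(K_{-1})$), and an extra factor intrudes. For non-trivial $\eta$, I would try to prove surjectivity by computing $\mathrm{Col}^+$ mod the maximal ideal on the $\eta$-part, reducing it to the surjectivity of the trace from the formal group of $E$ over $\Q_p(\zeta_p)$ onto the $\eta$-eigenspace. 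That surjectivity should hold since $\eta\neq1$ and $\hat E(\mathfrak m)$ has no $p$-torsion when $a_p=0$. This is precisely the content of Kobayashi's \cite[Theorem 4.1]{Ko} and the computations at the end of \S 8 there, which one would ultimately cite.
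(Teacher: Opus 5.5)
This matches the paper, which gives no argument of its own for this lemma and simply quotes Kobayashi's Theorem 4.1. Your outline (Kato's divisibility, the $\pm$-Coleman maps with $\mathrm{Col}^{+}(\mathrm{loc}_p\mathbf{z})=L_p^{+}$, Poitou--Tate, Rohrlich for injectivity, and surjectivity of $\mathrm{Col}^{+}$ on non-trivial $\eta$-parts) is a faithful sketch of Kobayashi's proof of that theorem, ending with the same citation.

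One harmless slip: with your $\Z[\zeta_{p^{n+1}},1/Np]$-cohomology, the Poitou--Tate sequence does not end with $\mathbf{H}^2_{\mathrm{Iw}}$ but with the dual of the fine Selmer group. In general that is only a submodule of $\mathbf{H}^2_{\mathrm{Iw}}$, since the local $H^2$ terms at primes dividing $N$ need not vanish. Its characteristic ideal then divides $\mathrm{Char}(\mathbf{H}^2_{\mathrm{Iw}})$, so Kato's divisibility still gives the conclusion.
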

\begin{prop}\label{SelTriv} Suppose that $\ord_p\left(\frac{p}{\tau(\overline{\eta})}\,\frac{L(E,\overline{\eta},1)}{\Omega_E}\right)=0$, with $\eta$ even and non-trivial. Suppose also that the representation of $G_{\Q}$ on $E[p]$ has image $\GL_2(\FF_p)$. Then $\mathrm{Sel}(E/K_0)^{\eta}$ is trivial. 
\end{prop}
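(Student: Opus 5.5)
Combine the interpolation formula (\ref{interp}) with Lemmas \ref{LambdaTorsion}, \ref{NoFinite} and \ref{MCdiv} to show that $\mathfrak{X}^+(E/K_\infty)^\eta$ vanishes, and then descend to $K_0$.

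First, by (\ref{interp}) and the hypothesis, $L_p^+(E,\eta,0)$ is a $p$-adic unit, so $L_p^+(E,\eta,T)$ is a unit of $\Z_p[[T]]$. By Lemma \ref{MCdiv} (using the $\GL_2(\FF_p)$ hypothesis), $\mathrm{Char}(\mathfrak{X}^+(E/K_\infty)^\eta)$ divides a unit, so it is the unit ideal. By Lemma \ref{LambdaTorsion} this module is finitely generated and torsion, and by Lemma \ref{NoFinite} it has no nonzero finite submodule. By the structure theorem it is then pseudo-isomorphic to $\Z_p[[T]]/\mathrm{Char}=0$, i.e. it is finite. A finite module without nonzero finite submodules is zero, so $\mathfrak{X}^+(E/K_\infty)^\eta=0$, and dually $\mathrm{Sel}^+(E/K_\infty)^\eta=0$.

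Next, descend to $K_0$. For $n=0$ the only condition in the definition of $E^+(K_{0,v})$ is for $m$ even with $0\le m<0$, which is vacuous. Hence $E^+(K_{0,v})=E(K_{0,v})$ and $\mathrm{Sel}^+(E/K_0)=\mathrm{Sel}(E/K_0)$. It remains to show that $\mathrm{Sel}(E/K_0)^\eta$ injects into $\mathrm{Sel}^+(E/K_\infty)^\eta$. Restriction $H^1(K_0,E[p^\infty])\to H^1(K_\infty,E[p^\infty])^{\Gamma}$ has kernel $H^1(\Gal(K_\infty/K_0),E(K_\infty)[p^\infty])$. The image of $G_\Q$ on $E[p]$ is $\GL_2(\FF_p)$ and $K_\infty/\Q$ is abelian, so $E[p]^{G_{K_\infty}}=0$: an abelian quotient of $\GL_2(\FF_p)$ cannot make the image act through scalars, since the image restricted to $G_{K_\infty}$ still contains $\SL_2(\FF_p)$. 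Therefore $E(K_\infty)[p^\infty]=0$ and restriction is injective. Restriction also carries local conditions to local conditions, since $E(K_{0,v})\otimes\Q_p/\Z_p$ maps into $E^+(K_{n,v})\otimes\Q_p/\Z_p$; this uses Kobayashi's observation that $E(K_{0,v})\subseteq E^+(K_{n,v})$, because traces of points already defined over $K_{0,v}$ to intermediate fields stay in smaller fields. Restriction commutes with the $\Delta$-action, so taking $\eta$-components gives $\mathrm{Sel}(E/K_0)^\eta\hookrightarrow \mathrm{Sel}^+(E/K_\infty)^\eta=0$.

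The main obstacle is the compatibility of local conditions in this last step, namely verifying $E(K_{0,v})\subseteq E^+(K_{n,v})$ at the prime above $p$. At the other places the condition is just the usual one, with $E^\pm$ replaced by the ordinary local images. At the prime above $p$ I would check directly: for $P\in E(K_{0,v})$ and $m\ge0$ even, $\Tr_{K_{n,v}/K_{m+1,v}}P=[K_{n,v}:K_{m+1,v}]P\in E(K_{0,v})\subseteq E(K_{m,v})$. This is exactly what the definition requires, so the inclusion holds and the descent goes through.
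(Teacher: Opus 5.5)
Your proof is correct, but it takes a somewhat different route from the paper in both steps.

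\textbf{Iwasawa-theoretic step.} The paper never shows that $\mathfrak{X}^{+}(E/K_{\infty})^{\eta}$ vanishes. It asserts $\mathfrak{X}^{+}(E/K_{\infty})^{\eta}\simeq \Z_p[[T]]/\mathrm{Char}$, views this as a quotient of $\Z_p[[T]]/(L_p^+(E,\eta,T))$, and concludes that the $T$-coinvariants are a quotient of $\Z_p/L_p^+(E,\eta,0)=0$. You instead observe that a unit constant term makes $L_p^+(E,\eta,T)$ a unit of $\Z_p[[T]]$. Hence the characteristic ideal is trivial, the module is finite, and by Lemma \ref{NoFinite} it is zero. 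This only uses the structure theorem up to pseudo-isomorphism. That is slightly cleaner: absence of finite submodules does not in general make a torsion module cyclic, so the paper's identification with $\Z_p[[T]]/\mathrm{Char}$ is stronger than what Lemmas \ref{LambdaTorsion} and \ref{NoFinite} give. It is harmless here only because the characteristic ideal turns out to be trivial.

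\textbf{Descent step.} The paper cites the first part of Kobayashi's control theorem (his Theorem 9.3) for a surjection $\mathfrak{X}^{+}(E/K_{\infty})^{\eta}/T\twoheadrightarrow\mathfrak{X}^{+}(E/K_0)^{\eta}$. You prove by hand the dual statement, which is the only half needed: injectivity of $\mathrm{Sel}(E/K_0)^{\eta}=\mathrm{Sel}^+(E/K_0)^{\eta}\hookrightarrow\mathrm{Sel}^+(E/K_\infty)^{\eta}$. It rests on two facts:
\begin{enumerate}
\item $E(K_\infty)[p^\infty]=0$. This comes from the $\GL_2(\FF_p)$ hypothesis: the image of $G_{K_\infty}$ contains the commutator subgroup $\SL_2(\FF_p)$ and lies in it, since the determinant is cyclotomic.
\item The elementary inclusion $E(K_{0,v})\subseteq E^+(K_{n,v})$ at the prime above $p$, which your trace computation correctly verifies.
\end{enumerate}
Your version is self-contained at the cost of a second use of the big-image hypothesis. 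The paper's citation is shorter but leans on a heavier result than is actually required.
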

\begin{proof} First, it follows directly from the definition that $\mathrm{Sel}(E/K_0)^{\eta}$ is the same as  $\mathrm{Sel}^+(E/K_0)^{\eta}$. Taking the Pontryagin dual, it suffices to show that $\mathfrak{X}^{+}(E/K_0)^{\eta}$ is trivial.
The first part of \cite[Theorem 9.3]{Ko} (with $\omega_0^+=T$) gives a surjection from $\mathfrak{X}^{+}(E/K_{\infty})^{\eta}/T\,\mathfrak{X}^{+}(E/K_{\infty})^{\eta}$ to $\mathfrak{X}^{+}(E/K_0)^{\eta}/T\,\mathfrak{X}^{+}(E/K_0)^{\eta}$, which is the same as $\mathfrak{X}^{+}(E/K_0)^{\eta}$. Hence it suffices to show that $\mathfrak{X}^{+}(E/K_{\infty})^{\eta}/T\,\mathfrak{X}^{+}(E/K_{\infty})^{\eta}$ is trivial. 

By Lemmas \ref{LambdaTorsion} and \ref{NoFinite}, $$\mathfrak{X}^{+}(E/K_{\infty})^{\eta}\simeq \Z_p[[T]]/\mathrm{Char}(\mathfrak{X}^{+}(E/K_{\infty})^{\eta}),$$ which by Lemma \ref{MCdiv} is a quotient of $$\mathfrak{X}^{+}(E/K_{\infty})^{\eta}\simeq \Z_p[[T]]/L_p^+(E,\eta,T).$$ Hence $\mathfrak{X}^{+}(E/K_{\infty})^{\eta}/T\,\mathfrak{X}^{+}(E/K_{\infty})^{\eta}$ is a quotient of $$\mathfrak{X}^{+}(E/K_{\infty})^{\eta}\simeq \Z_p[[T]]/(T, L_p^+(E,\eta,T))\simeq \Z_p/L_p^+(E,\eta,0),$$ which is trivial since 
$$\ord_p(L_p^+(E,\eta,0))=\ord_p\left(\frac{p}{\tau(\overline{\eta})}\,\frac{L(E,\overline{\eta},1)}{\Omega_E}\right)=0.$$
\end{proof}

\section{Constructing an element in a Selmer group}
\begin{prop}\label{notirred} The $4$-dimensional representation $\rho_5$ of $G_{\Q}$ on $H^3_{\mathrm{et}}(X_{-\frac{1}{7},\Qbar}, \Q_{5})$ is reducible.
\end{prop}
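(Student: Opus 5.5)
Suppose for contradiction that $\rho_5\otimes\Q_5$ is irreducible, and follow Ribet's lattice construction as adapted by Brown \cite{Br}. By Propositions \ref{Yp} and \ref{Zp}, the semisimplification of $\rhobar_5$ is $\mathrm{id}\oplus\epsilon^{-3}\oplus\rhobar_{g,5}(-1)$, and $\rhobar_{g,5}(-1)$ is irreducible. By irreducibility of $\rho_5$ and Ribet's lemma (choosing a suitable $G_{\Q}$-stable lattice in $H^3_{\text{\'et}}(X_{-\frac{1}{7},\Qbar},\Q_5)$), we obtain a lattice whose reduction has a non-split extension in which $\rhobar_{g,5}(-1)$ and $\mathrm{id}$ are adjacent. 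Duality lets us choose the direction. Concretely, I would use Brown's version of the argument: take a quotient of the reduced lattice, of dimension $3$, of the form $0\to \rhobar_{g,5}(-1)\to \ast\to \mathrm{id}\to 0$, or its dual with $\epsilon^{-3}$. I would control this using the parabolic shape from Proposition \ref{GaloisMod5} together with the fact that $\mathrm{id}$ and $\epsilon^{-3}$ have different Hodge--Tate weights. This gives a non-zero class $c\in H^1(\Q, E[5](-1))$, where $E=\mathbf{14.a4}$ and $E[5]\simeq\rhobar_{g,5}$ by Weil pairing up to twist. The class is non-zero because the extension is non-split.

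Next I would check the local conditions. Away from $\{2,5,7\}$ the class is unramified by Lemma \ref{goodred}. At $5$ it is crystalline, hence Fontaine--Laffaille, since $\rho_5$ is crystalline with weights in $[0,3]$ and $5$ is large enough. At $2$ and $7$ I would use the conductor bounds, or the triviality of the relevant local $H^0$/$H^1$ groups, to kill any ramified contribution. The result is an element of order $5$ in the Bloch--Kato Selmer group of $E[5](-1)$, i.e.\ an object tied to $L(g,-1)$, which is not critical. To pass to something we can bound, note that $\epsilon^{-2}\equiv\eta$ mod $5$, where $\eta$ is the quadratic character of conductor $5$ (even and non-trivial). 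So $E[5](-1)$ and $E[5]\otimes\eta$ agree up to the Weil pairing twist, which I must track carefully, as mod $5$ Galois modules. After restriction to $K_0=\Q(\zeta_5)$, which is injective on $H^1$ because $[K_0:\Q]$ is prime to $5$, the class lands in the $\eta$-eigenspace $\mathrm{Sel}(E/K_0)^{\eta}[5]$. Here I must check that the Fontaine--Laffaille condition at $5$ matches the finite (Kummer) condition $E(K_{0,v})\otimes\Q_5/\Z_5$, and that the local conditions at $2$ and $7$ also match.

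Finally I would apply Proposition \ref{SelTriv} with $p=5$. The hypotheses to verify are as follows. First, $a_5(E)=0$: this holds for $\mathbf{14.a}$, which is supersingular at $5$. Second, $E[5]$ has image $\GL_2(\FF_5)$, which I would check via surjectivity data, for instance from LMFDB or by showing irreducibility plus an element of suitable order. Third, $\frac{5}{\tau(\eta)}\frac{L(E,\eta,1)}{\Omega_E}$ is a $5$-adic unit, which I would compute numerically as a rational number, via modular symbols. Proposition \ref{SelTriv} then says that $\mathrm{Sel}(E/K_0)^{\eta}$ is trivial, which contradicts the non-zero class and shows that $\rho_5$ is reducible.

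The main obstacle is the middle step. Two things need care there: producing the extension in the right direction, between $\mathrm{id}$ and $\rhobar_{g,5}(-1)$ rather than one involving $\epsilon^{-3}$, and verifying that it satisfies exactly the local conditions defining $\mathrm{Sel}(E/K_0)^{\eta}$, particularly at $5$ and at the bad primes $2$ and $7$.
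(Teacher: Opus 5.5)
Your outline follows the paper's route: a Ribet--Brown lattice, a class in $H^1(\Q,\rhobar_{g,5}(-1))$, Bloch--Kato local conditions, the identity $\epsilon^{-2}=\eta$, restriction to $\Q(\zeta_5)$, and Proposition~\ref{SelTriv} with the LMFDB data. But the step you flag is a genuine gap, and the tools you propose do not close it.

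\emph{Direction.} Duality does not let you choose the direction. The isomorphism $\rho_5^\vee\simeq\rho_5(3)$ swaps $\mathrm{id}$ and $\epsilon^{-3}$ and fixes $\rhobar_{g,5}(-1)$. It therefore pairs $0\to\rhobar_{g,5}(-1)\to\ast\to\mathrm{id}\to0$ with $0\to\epsilon^{-3}\to\ast\to\rhobar_{g,5}(-1)\to0$, and both give classes in $H^1(\Q,\rhobar_{g,5}(-1))$. It pairs $0\to\mathrm{id}\to\ast\to\rhobar_{g,5}(-1)\to0$ only with $0\to\rhobar_{g,5}(-1)\to\ast\to\epsilon^{-3}\to0$. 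Those two give classes in $H^1(\Q,\rhobar_{g,5}(2))=H^1(\Q,E[5]\otimes\omega)$, an odd twist. There the crystalline condition at $5$ is vacuous, and the Greenberg--Wiles formula (factor $5^2$ at $5$, $5^{-1}$ at $\infty$) produces nonzero Selmer classes, so no contradiction is available.

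\emph{Excluding $\epsilon^{-3}$.} You need what Ribet--Brown supply: a lattice with a non-split extension whose quotient is $\mathrm{id}$ and whose sub is $\epsilon^{-3}$ or $\rhobar_{g,5}(-1)$. You then have to exclude $\epsilon^{-3}$, and neither of your proposed tools does this.
\begin{itemize}
\item The parabolic shape is a property of the lattice $H^3_{\mathrm{et}}(X_{-1/7,\Qbar},\Z_5)$, not of Ribet's lattice.
\item Hodge--Tate weights only see the prime $5$. Since $\epsilon^{-3}=\epsilon$ as mod $5$ characters, such an extension is a Kummer class in $H^1(\Q,\mu_5)=\Q^\times/\Q^{\times5}$. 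The class of $7$ is nonzero, is a fifth power in $\Q_2$ and in $\Q_5$ (because $7^4\equiv1\bmod 25$), and is ramified only at $7$.
\end{itemize}
The missing ingredient in the paper is global: the $5$-part of the class group of $\Q(\zeta_5)$ is trivial, so no everywhere-unramified such extension exists. To use this you must also show that the extension is unramified at $7$. The Kummer class of $7$ shows this is not automatic, and the paper's one-line appeal to the class group does not address it, so you would have to supply that argument.

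\emph{The twist.} In the paper's normalisation $\tr\rho_{g,5}(\Frob_p^{-1})=b_p$, so $\rho_{g,5}$ is $H^1$ of $E$ and $V_5(E)\simeq\rho_{g,5}(1)$. Hence $\rhobar_{g,5}\simeq E[5](-1)$, not $E[5]$, and $\rhobar_{g,5}(-1)\simeq E[5](-2)=E[5]\otimes\eta$ exactly. This is not cosmetic. Your $E[5](-1)=E[5]\otimes\omega^{-1}$ is an odd twist, and Proposition~\ref{SelTriv} requires $\eta$ even.

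\emph{Local conditions.}
\begin{itemize}
\item The paper lifts $c$ to $d\in H^1(\Q,W)$, where $W=E[5^\infty](-2)$, using $H^0(\Q,W)=0$.
\item At $2$ and $7$ it uses Tate duality to reduce to $H^1_f(\Q_p,T(4))\simeq H^0(\Q_p,W(4))$. The order of this group is prime to $5$ by the Euler factors at $s=3$ and the Tamagawa numbers. Conductor bounds alone would not give this.
\item At $5$ your worry disappears. For $V_5(E)(-2)$ one has $\mathrm{Fil}^0D_{\mathrm{dR}}=D_{\mathrm{dR}}$, and $H^0(\Q_5,E[5](-2))=0$ by supersingularity. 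So $H^1_f(\Q_5,V_5(E)(-2))=0$ and the class is locally trivial at $5$. It therefore lies in the Kummer image over $\Q_5(\zeta_5)$ for trivial reasons.
\end{itemize}
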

\begin{proof} Suppose to the contrary that $\rho_5$ is irreducible. Let $V'$ be the $\Q_5$-vector space $H^3_{\mathrm{et}}(X_{-\frac{1}{7},\Qbar}, \Q_{5})$ on which $G_{\Q}$ acts via $\rho_5$. Choosing the $G_{\Q}$-invariant $\Z_5$-lattice $T':=H^3_{\text{\'et}}(X_{-\frac{1}{7},\Qbar}, \Z_{5})$ and reducing mod $5$ gives the representation we have called $\rhobar_5$, on a space $\overline{T'}$, which by Propositions \ref{Yp} and \ref{Zp} has irreducible composition factors $\mathrm{id}$, $\epsilon^{-3}$ and $\rhobar_{g,5}(-1)$. The $2$-dimensional factor is the middle subfactor in the filtration of $\overline{T'}$ associated with the maximal parabolic subgroup $P$ in Proposition \ref{normaliser}.

Had $\rhobar_5$ been irreducible, the invariant lattice $T'$ would have been unique up to scaling. But because it is reducible, there are essentially different choices for $T'$. Arguing exactly as in the proof of \cite[Proposition 8.3]{Br}, which adapts an argument of Ribet \cite{Ri}, it is possible to choose a $G_{\Q}$-invariant lattice $T'$ in such a way that inside $\overline{T'}$ there is a non-split extension of $\mathrm{id}$ by either $\epsilon^{-3}$ or $\rhobar_{g,5}(-1)$. The former would produce a non-zero element in the $\epsilon^{-3}$-part of the $5$-part of the class group of $\Q(\zeta_5)$, which is however trivial, so it must be the latter. 

This produces a non-zero element $c$ in $H^1(\Q, \rhobar_{g,5}(-1))$. Now let $V$ be the $\Q_5$-vector space on which $\rho_{g,5}(-1)$ acts, with $G_{\Q}$-invariant lattice $T$ and $W:=V/T$, so $\rhobar_{g,5}(-1)$ is on the $5$-torsion in $W$. Following Bloch and Kato \cite{BK}, 
$$H^1_f(\Q_p, V):=\ker(H^1(\Q_p, V)\rightarrow H^1(I_p, V)\,\text{ for $p\neq 5$};$$
$$H^1_f(\Q_5, V):=\ker(H^1(\Q_p, V)\rightarrow H^1(\Q_p, V\otimes B_{\crys}),$$
where $I_p$ is the inertia subgroup of $G_{\Q_p}$ and $B_{\crys}$ is Fontaine's ring, whose construction is explained in \cite[\S 1]{BK}. The image of $H^1_f(\Q_p, V)$ in $H^1(\Q_p, W)$, and its inverse image in $H^1(\Q_p, T)$, are denoted $H^1_f(\Q_p, W)$ and $H^1_f(\Q_p, T)$, respectively. Let $S$ be a finite set of prime numbers, not including $5$. The Bloch-Kato Selmer group is defined to be
$$H^1_f(\Q, W):=\ker\left(H^1(\Q, W)\xrightarrow{\mathrm{res}}\bigoplus_p\frac{H^1(\Q_p, W)}{H^1_f(\Q_p, W)}\right),$$ and relaxing local conditions at primes in $S$,
$$H^1_S(\Q, W):=\ker\left(H^1(\Q, W)\xrightarrow{\mathrm{res}}\bigoplus_{p\notin S}\frac{H^1(\Q_p, W)}{H^1_f(\Q_p, W)}\right).$$

Since $\rhobar_{g,5}(-1)$ is irreducible, $H^0(\Q, W)$ is trivial, and hence the image  $d$ in $H^1(\Q, W)$ of the non-zero element $c\in H^1(\Q, \rhobar_{g,5}(-1))$ is non-zero. By Lemma \ref{goodred}, $\rho_5$ is crystalline at $5$ and unramified (trivial restriction to $I_p$) at all $p\notin\{2,5,7\}$. It follows, as in the proof of \cite[Proposition 8.3]{Br}, that $d\in H^1_{\{2,7\}}(\Q, W)$.

The cokernel of the inclusion $H^1_f(\Q, W)\hookrightarrow H^1_{\{2,7\}}(\Q, W)$ is contained in $\bigoplus_{p\in\{2,7\}}\frac{H^1(\Q_p, W)}{H^1_f(\Q_p, W)}$, which by local Tate duality is dual to $\bigoplus_{p\in\{2,7\}}H^1_f(\Q_p, T(4))$.
(Since $\Hom(\rho_{g,5}, \Q_5(1))\simeq \rho_{g,5}(2)$, $\Hom(\rho_{g,5}(-1), \Q_5(1))\simeq \rho_{g,5}(3)=\rho_{g,5}(-1)(4)$.) We shall show that $d\in H^1_f(\Q, W)$ by checking that $\bigoplus_{p\in\{2,7\}}H^1_f(\Q_p, T(4))$ is trivial.

First, $H^1_f(\Q_p, V(4))\simeq H^1(G_{\FF_p}, V(4)^{I_p})$ (by inflation-restriction). Since $G_{\FF_p}$ is pro-cyclic, generated by $\Frob_p$, 
$$H^1(G_{\FF_p}, V(4)^{I_p})\simeq \frac{V(4)^{I_p}}{(\Frob_p-1)V(4)^{I_p}}.$$
The elliptic curve $E/\Q$ associated with the newform $g$ (let's choose $\mathbf{14.a1}$ \cite{LMF} from the isogeny class, $E:y^2+xy+y=x^3-2731x-55146$) has non-split multiplicative reduction at $2$, split multiplicative reduction at $7$. It follows that $\Frob_p$ acts on the $1$-dimensional space(s) $V(4)^{I_p}$ as $-p^4$ when $p=2$, or as $p^4$ when $p=7$, so $H^1_f(\Q_p, V(4))$ is trivial, and $$H^1_f(\Q_p, T(4))\simeq H^1(\Q_p, T(4))_{\mathrm{tors}}\simeq H^0(\Q_p, W(4)).$$ Neither Tamagawa factor, at $2$ or at $7$, is divisible by $5$, so $W^{I_p}=V^{I_p}/T^{I_p}$. Plugging in $s=3$ to the Euler factors for $\rho_{g,5}$, 
$$\ord_5(\# H^0(\Q_2, W(4)))=\ord_5(1+2^{-3})=0,$$ $$\ord_5(\# H^0(\Q_7, W(4)))=\ord_5(1-7^{-3})=0.$$

Now we know we have a non-zero $5$-torsion element $d\in H^1_f(\Q, W)$. Since $V_5(E)\simeq \rho_5(1)$, $W\simeq E[5^{\infty}](-2)$, so $d\in H^1_f(\Q, E[5^{\infty}](-2))$. Recall that $c$ is $d$ viewed as an element of $H^1(\Q, E[5](-2))$. Mod $5$, the cyclotomic and Teichmuller characters, $\tilde{\epsilon}$ and $\omega$, are the same, so $c\in H^1(\Q, E[5]\otimes \eta)$, where $\eta=\omega^{-2}$, which is the Legendre symbol $\left(\frac{\cdot}{5}\right)$, the Kronecker character for $\Q(\sqrt{5})$. Note that $\eta$ is an even character of $\Delta=\Gal(K_0/\Q)$, where $K_0=\Q(\zeta_5)$, as in the previous section. 

Letting $d_0$ denote the restriction of $d$ to $H^1(K_0, E[5^{\infty}](-2))$, it is a direct consequence of the definitions that $d_0\in H^1_f(K_0, E[5^{\infty}](-2))$. Let $c_0$ be $d_0$ viewed as an element of $H^1(K_0, E[5](-2))$. Then $c_0$ is the restriction of $c\in H^1(\Q, E[5]\otimes \eta)$, from which it follows that $c_0\in H^1(K_0, E[5])^{\eta}$, hence $d_0\in H^1_f(K_0, E[5^{\infty}])^{\eta}$. By \cite[Example 3.11]{BK}, $d_0\in \mathrm{Sel}(E/K_0)^{\eta}$. 

Using LMFDB, we check that $E$ is supersingular at $p=5$, and that the representation of $G_{\Q}$ on $E[5]$ has image $\GL_2(\FF_5)$. Hence Proposition \ref{SelTriv} will give us the contradiction we desire, if we can just check that 
$\ord_5\left(\frac{p}{\tau(\overline{\eta})}\,\frac{L(E,\overline{\eta},1)}{\Omega_E}\right)=0$. But this is $\ord_5\left(\frac{L(E_5,1)}{\Omega_{E_5}}\right)$, where $E_5$ is the quadratic twist of $E$ by $\left(\frac{\cdot}{5}\right)$, which is $\mathbf{350.f1}$, and LMFDB tells us that $\frac{L(E_5,1)}{\Omega_{E_5}}=9$.                                                                                                                                  
\end{proof}
\begin{remar} In \cite[Appendix D]{BEKK} there is a table listing examples of experimental rank-$2$ attractor points at rational parameter values for various ``Calabi-Yau operators''. In ten cases (appearing in the Master's thesis of B\"onisch \cite[Table 7.1]{Bo}) there is an involutional symmetry, which should split the $4$-dimensional $\ell$-adic Galois representation into two $2$-dimensional pieces. In the remaining twelve cases (excluding that dealt with in this paper), one might wonder whether our method could work. Unfortunately, the answer appears to be ``No''. For the candidate weight-$2$ and weight-$4$ forms in  \cite[Appendix D]{BEKK}, their $2$-dimensional mod $\ell$ Galois representations are only ever reducible when $\ell=2$ or $3$. (One checks for congruences $a_p\equiv 1+p^3\pmod{\ell}$ or $b_p\equiv 1+p\pmod{\ell}$, for primes $p\nmid\ell N$.) Moreover, this only happens in cases where $\ell$ also divides the level, whereas for our construction of $d\in H^1_f(\Q, W)$ it is important that $\ell$ (equal to $5$ in our case) is a prime of good reduction, so that the local condition at $\ell$ is satisfied.\hfill$\triangle$
\end{remar}
\section{The final result}
\begin{thm}\label{main}
Conjecture \ref{Conj1} is true, i.e. the (semi-simplification of the) $4$-dimensional representation of $G_{\Q}$ on $H^3_{\text{\'et}}(X_{-\frac{1}{7},\Qbar}, \Q_{\ell})$ is isomorphic to a direct sum $\rho_{f,\ell}\oplus \rho_{g,\ell}(-1)$, where $f$ and $g$ are cuspidal newforms of weights $4$ and $2$, respectively, for $\Gamma_0(14)$, labelled $\mathbf{14.4.a.a}$ and $\mathbf{14.2.a.a}$ in the LMFDB database \cite{LMF}.
\end{thm}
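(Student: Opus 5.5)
By Proposition \ref{notirred}, $\rho_5\otimes\Q_5$ is reducible. I will first show that its composition factors are two $2$-dimensional pieces, then identify each with a modular representation, and finally transfer the result to all $\ell$.

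\textbf{Step 1: the shape of the decomposition.} Every $G_{\Q}$-stable subspace gives, after choosing a lattice, a subquotient of $\rhobar_5$ in the semisimplified sense. The Hodge--Tate weights of $\rho_5$ are $\{0,1,2,3\}$, each with multiplicity one. Poincar\'e duality pairs the pieces, sending weights $w\mapsto 3-w$. A $1$-dimensional crystalline subquotient would be a character of the form $\epsilon_5^{-w}\psi$ with $\psi$ of finite order. By Weil purity its Frobenius eigenvalues would have absolute value $p^{3/2}$, but they would have to equal $p^{w}$ times a root of unity, which is impossible. So the only possibility is two $2$-dimensional irreducible pieces $V_1,V_2$. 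Their reductions are read off from Propositions \ref{Yp} and \ref{Zp}, using Brauer--Nesbitt for semisimplifications: one piece reduces to $\rhobar_{g,5}(-1)$ and the other to $\mathrm{id}\oplus\epsilon^{-3}$.

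Fontaine--Laffaille theory then matches Hodge--Tate weights. The first piece has weights $\{1,2\}$ and the second $\{0,3\}$, since the residual weights of $\rhobar_{g,5}(-1)$ are $1,2$. The second piece is odd, because its residual determinant is $\epsilon^{-3}$.

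\textbf{Step 2: modularity.} Each $V_i$ is crystalline at $5$, with distinct Hodge--Tate weights in the Fontaine--Laffaille range, and is unramified outside $\{2,5,7\}$.
\begin{itemize}
\item For $V_1(1)$, whose residual representation is irreducible and modular (namely $\rhobar_{g,5}$), I apply Kisin's modularity lifting theorem.
\item For $V_2$, which is residually reducible with factors $\mathrm{id}$ and $\epsilon^{-3}$ (a ratio that is not $\epsilon^{\pm1}$), I apply Skinner--Wiles and Pan.
\end{itemize}
This shows that $V_1(1)$ comes from a weight-$2$ newform $h$, and $V_2$ from a weight-$4$ newform $f'$, both of level $2^a7^b$. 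To bound the level, I use Livn\'e's bounds on conductors: the exponent at $2$ is at most $8$ and the exponent at $7$ is at most $2$.

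\textbf{Step 3: identifying the forms.} The plan is to pin down $h$ and $f'$ by traces of Frobenius. Proposition 5.3 gives the characteristic polynomials at $p=3,11,13,17,19,29,31,113$, from which I extract the unordered pair $\{a_p,pb_p\}$. Weil absolute values ($|pb_p|\le 2p^{3/2}$, while $a_p$ is a weight-$4$ eigenvalue) together with the residual congruences distinguish which factor is which.

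The remaining task is to check that, among newforms of weight $2$ (respectively $4$) and level dividing $2^87^2$, only $g$ (respectively $f$) has these eigenvalues at these primes. This is a finite computation with modular symbols, analogous to the Magma kernel computation in Proposition \ref{Zp}. For $h$ it is already done there: $h$ has residual representation $\rhobar_{g,5}$ and satisfies $c_p\equiv b_p$; the exact equality $c_p=b_p$ at the listed primes then forces $h=g$. An alternative for $h$ is Faltings--Serre.

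For $f'$ the computation is harder, and I expect this to be \textbf{the main obstacle}: the space $S_4(\Gamma_0(12544))$ is large. My first approach is to compute the kernel of $T_p-a_p$ over $\Q$ (or modulo a large prime) in the new subspaces of each level $2^a7^b$. If that is too expensive, I will sharpen the conductor bound. Since $V_2$ is residually $\mathrm{id}\oplus\epsilon^{-3}$, which is tamely ramified, the wild part of its conductor at $2$ should be small. This may allow a reduction to a level dividing $2^2\cdot 7^2$, or even to $14$, using the structure of the Hulek--Verrill reduction types.

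\textbf{Step 4: all $\ell$.} Once the traces agree for $\ell=5$, we have
$$\tr\rho_5(\Frob_p^{-1})=a_p+pb_p$$
for almost all $p$. By independence of $\ell$ the same identity holds for every $\ell$, and Chebotarev together with Brauer--Nesbitt then gives the isomorphism of semisimplifications for every $\ell$.
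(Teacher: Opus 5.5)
Your overall architecture is the paper's. You rule out one-dimensional constituents by purity, find two $2$-dimensional constituents reducing to $\rhobar_{g,5}(-1)$ and $\mathrm{id}\oplus\epsilon^{-3}$, get modularity via Kisin, Skinner--Wiles and Pan, and finish with independence of $\ell$.

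\textbf{Step 3 is where you diverge, and your route is much heavier.} You bound the levels only by $2^87^2$ and then rely on modular-symbol computations at level $12544$.
\begin{itemize}
\item For $h$ this works. Once you know $c_p=b_p$ exactly at $p=11,13,17,19$, the rational eigenspace reduces into the $32$-dimensional kernel of Proposition \ref{Zp}, forcing $h=g$.
\item For $f'$ you leave a large weight-$4$ computation undone and only speculate about a sharper bound.
\end{itemize}
The paper gets the sharp bound at once from Livn\'e's Propositions 2.3 and 3.1, and the underlying reason is elementary.
\begin{itemize}
\item For $\ell\in\{2,7\}$, take a $5$-adic lift of a residual representation that is unramified, or unipotently ramified, at $\ell$. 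The image of inertia is then pro-$5$, so it factors through the $\Z_5(1)$ quotient of tame inertia.
\item Since $5\nmid \ell^2-1$, this inertia must act unipotently.
\item The residual $\mathrm{id}\oplus\epsilon^{-3}$ is \emph{unramified} at $2$ and $7$, not merely tame. So the conductor of $\rho_2$ divides $14$.
\item The conductor of $\rho_1(1)$ equals that of $\rhobar_{g,5}$, namely $14$.
\end{itemize}
Then $\rho_1(1)\simeq\rho_{g,5}$, because $g$ is the only weight-$2$ newform of level $14$. This gives $\tr\rho_2(\Frob_p^{-1})$ exactly, as the complementary trace. Finally $f$ is singled out among the few weight-$4$ newforms of level dividing $14$. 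Your ``main obstacle'' disappears, and you need neither the level-$12544$ computations nor any appeal to Hulek--Verrill reduction types.

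\textbf{Two slips also need fixing.}

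First, your parenthetical in Step 2 is false. The character $\epsilon$ has order $4$, so $\epsilon^{-3}=\epsilon$, and the ratio of the residual characters of $\rho_2$ is exactly $\epsilon^{\pm1}$. The step survives only because the paper invokes Pan's Theorem 1.0.4. Its hypotheses are just $p>3$ and $\rho$ irreducible, odd, unramified almost everywhere, and de Rham at $p$ with distinct Hodge--Tate weights. There is no condition on the residual ratio, so do not justify the step with a theorem that needs the ratio to avoid $\epsilon^{\pm1}$.

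Second, Weil bounds do not tell the two traces apart: both $a_p$ and $pb_p$ have absolute value at most $2p^{3/2}$. The residual congruences also fail to separate them at $p=13$, where
\[
a_{13}=18\equiv 3\equiv -52=13b_{13}\pmod 5 .
\]
What separates them is that $\tr\rho_1(\Frob_p^{-1})=pc_p$ with $c_p$ an algebraic integer, so the value $18$ is impossible for $\rho_1$. The paper's order, identifying $\rho_1$ first, avoids this issue entirely.
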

\begin{proof} As already noted in the Introduction, it suffices to prove this for $\ell=5$, i.e. for the $4$-dimensional representation we have called $\rho_5$. 
In Propositions \ref{Yp} and \ref{Zp} we have established that the irreducible composition factors of the residual representation $\rhobar_5$ are $\mathrm{id}$, $\epsilon^{-3}$ and $\rhobar_{g,5}(-1)$. 

First we show that $\rho_5$ has no $1$-dimensional composition factor. The associated character of $G_{\Q}$ would reduce mod $5$ to $\mathrm{id}$ or $\epsilon^{-3}$. It could only be $\mathrm{id}$ or $\tilde{\epsilon}^{-3}$, where $\tilde{\epsilon}$ is the $5$-adic cyclotomic character, there being no suitable character in the kernel of reduction mod $5$. This is because there are no non-trivial characters, of conductor divisible at most by the primes $2$ and $7$, taking $5^{\mathrm{th}}$ roots of $1$ as values, since neither $2$, $7$, $2-1=1$ nor $7-1=6$ is divisible by $5$. Hence for any prime $p\neq 2,7$ or $5$, the eigenvalues of $\Frob_p^{-1}$ on $H^3_{\mathrm{et}}(X_{-\frac{1}{7},\Qbar}, \Q_5)$ would include $1$ or $p^3$ (in fact one can easily show it would be both), contradicting the theorem of Deligne \cite{De} that they must have absolute value $p^{3/2}$.

By Proposition \ref{notirred}, $\rho_5$ is not irreducible. The only possibility remaining is that it has two irreducible composition factors, $\rho_1$ such that $\rhobar_1\simeq \rhobar_{g,5}(-1)$ and $\rho_2$ such that ${\rhobar_2}^{\mathrm{ss}}\simeq \mathrm{id}\oplus\epsilon^{-3}$. We need to show that $\rho_1(1)\simeq\rho_{g,5}$ and $\rho_2\simeq\rho_{f,5}$.

First we must show that they are modular. Both $\rho_1(1)$ and $\rho_2$ are odd, unramified outside finitely many places, and their restrictions to $G_{\Q_5}$ are crystalline (hence potentially semi-stable), with distinct Hodge-Tate weights. Since also $5>3$, all the hypotheses of \cite[Theorem 1.0.4]{Pa} are satisfied, with the result that $\rho_1(1)$ and $\rho_2$ are indeed modular. This very general theorem on the Fontaine-Mazur conjecture for $\GL_2$ combines new results of Pan with earlier work of Kisin \cite[Theorem]{Kis2} and of Skinner and Wiles \cite[Theorem 1.2]{SW}. Note that it avoids any assumptions about restrictions of $p$-adic representations to $G_{\Q_p}$ being potentially Barsotti-Tate or ordinary, and applies for both reducible and irreducible residual representations of $G_{\Q}$.

Now we know that $\rho_1(1)$ and $\rho_2$ are associated with some newforms. The levels of these forms are the Artin conductors of the representations, by a theorem of Carayol \cite{Ca}. The conductor of $\rhobar_1(1)\simeq \rhobar_{g,5}$ is $14$. The conductor of $\rho_1(1)$ is also divisible only by the primes $2$ and $7$ (cf. Lemma \ref{goodred}). Since neither $2$ nor $7$ is $-1\pmod{5}$, it follows from results of Livn\'e \cite[Propositions 2.3, 3.1]{L} that $\rho_1(1)$ must have the same conductor $14$. Since there is only one newform of weight $2$ for $\Gamma_0(14)$, we must have $\rho_1(1)\simeq \rho_{g,5}$. 

We now know that for every prime $p\notin\{2,5,7\}$, $\tr(\rho_1(\Frob_p^{-1}))=pb_p$, which matches $\alpha p$ throughout \cite[Table 2]{COES}. Hence $\tr(\rho_2(\Frob_p^{-1}))=\beta$ (not just mod $5$). Again by \cite[Propositions 2.3, 3.1]{L}, the conductor of $\rho_2$ divides $14$. The only newform of level dividing $14$ with Hecke eigenvalues matching the $\beta$ is $f$, i.e. $\mathbf{14.4.a.a}$.
\end{proof}
\begin{remar} Theorems 7.1.1 and 8.0.1 of \cite{Pa} apply more generally with $\Q$ replaced by a totally real $F$ (abelian over $\Q$ in the former case). But there is a condition that $p$ must split completely in $F$, which does not hold when $F=\Q(\sqrt{17})$ and $p=5$. This is among the obstacles to proving Conjecture \ref{Conj2}. \hfill$\triangle$
\end{remar}
\begin{remar} For values of $\phi\in\PP^1(\Q)-\Sigma$ different from $\phi=-\frac{1}{7}$, we would expect that $\rho_{\ell}$ is irreducible, and is isomorphic to the $4$-dimensional Galois representation associated \cite{W} with some genus-$2$ Siegel modular form $F$, a cuspidal Hecke eigenform of weight $3$, not CAP or endoscopic.

Then, taking $\ell=5$, for primes of good reduction $p\neq 5$, $\tr(\rho_{5}(\Frob_p^{-1}))=\mu_F(T(p))$, the eigenvalue by which a Hecke operator $T(p)$ acts on $F$. The reduction $\rhobar_5$ would have composition factors $\mathrm{id}$, $\epsilon^{-3}$ and a $2$-dimensional representation, assumed irreducible, which would then be $\rhobar_{h,5}(-1)$, associated with some weight-$2$ normalised newform $h=\sum c_nq^n$. We would have a congruence
\begin{equation}\label{cong}\mu_F(T(p))\equiv 1+p^3+pc_p\pmod{5},\end{equation}
for all primes $p\neq 5$ of good reduction.

For two examples of $F$ of paramodular level, not arrived at via Calabi-Yau $3$-folds, congruences of this shape were proved in \cite[\S 11]{DPRT}. For one of paramodular level $89$, the congruence is mod $5$ (like those here), while for $F$ of paramodular level $61$ the modulus is a divisor of $19$. The latter congruence was discovered experimentally by Buzzard and Golyshev.

For the paramodular level $89$ example, $h$, which is $\mathbf{89.2.a.b}$ has rational Hecke eigenvalues, and is associated with the elliptic curve $E/\Q$ that is $\mathbf{89.b1}$. Since, unlike in Proposition \ref{notirred}, $\rho_{F,5}$ {\em is} irreducible, we would expect that this time $\ord_5\left(\frac{L(E_5,1)}{\Omega_{E_ 5}}\right)>0$.                                                                                                                                  In fact $\frac{L(E_5,1)}{\Omega_{E_5}}=0$, and $E_5$ (which is $\mathbf{2225.a1}$) has $\mathrm{rank}(E_5(\Q))=2$.                                                                                                                                  

One might hope that for well-chosen examples of $\phi\in\PP^1(\Q)-\Sigma$ different from $\phi=-\frac{1}{7}$, it is possible to prove that $\rho_5$ comes from a particular $F$, in the same manner that Berger and Klosin \cite{BeKl} prove paramodularity of certain abelian surfaces. Thus, one would identify $h$ such that the right hand side of (\ref{cong}) is $\tr(\rhobar_5(\Frob_p^{-1}))$, and prove a congruence $(\ref{cong})$ for $F$. Then one would use Galois deformation theory, for a reducible residual representation, to show that the only possibility for $\rho_5$ (attached to $X_{\phi}$) is $\rho_{F,5}$ (attached to $F$). A difficulty with this approach would appear to be that, unlike the case of abelian surfaces, we do not know much about the local restrictions, at primes of bad reduction, of Galois representations coming from Calabi-Yau threefolds. Hence it would be problematic trying to select the appropriate local deformation conditions at such primes, or at least showing that they are satisfied by $\rho_5$.\hfill$\triangle$
\end{remar}

{\bf Acknowledgements. } I would like to thank Albrecht Klemm and Duco van Straten for very useful discussions (and the letter \cite{vS}), which had a great influence on Section 3. I thank also Kilian B\"onisch for referring me to \cite{BEKK}, and Marc Mezzarobba for his helpful response to an enquiry.

\end{document}